\documentclass{amsart}
\usepackage{macros}
\standardsettings
\colorcommentstrue

\setcounter{tocdepth}{1}

\newcommand{\emb}{\Phi}

\newcommand{\numberq}{T}

\renewcommand{\ss}{\mathbf s}

\renewcommand{\Neur}{\N_0}

\newcommand{\aff}{{\mathrm{aff}}}
\DeclareMathOperator{\Span}{Span}

\renewcommand{\dist}{\operatorname{dist}}

\newcommand{\eq}[2]{
\begin{equation}
\label{eq:#1}
{#2}
\end{equation}
}
\newcommand{\equ}[1]{\eqref{eq:#1}}

\renewcommand{\HD}{\dim}

\theoremstyle{theorem}
\maketheorem{affinecorollary}{Affine Corollary}
\theoremstyle{definition}
\maketheorem{affinecorollaryD}{Affine Corollary}

\draftfalse

\long\def\commkeith#1{\textcolor{cyan}{#1}}

\begin{document}
\title{Intrinsic Diophantine approximation on manifolds: General theory}

\authorlior\authordmitry\authorkeith\authordavid


\begin{Abstract}

We investigate the question of how well points on a nondegenerate $k$-dimensional submanifold $M \subseteq \mathbb R^d$ can be approximated by rationals also lying on $M$, establishing an upper bound on the ``intrinsic Dirichlet exponent" for $M$. We show that relative to this exponent, the set of badly intrinsically approximable points is of full dimension and the set of very well intrinsically approximable points is of zero measure. Our bound on the intrinsic Dirichlet exponent is phrased in terms of an explicit function of $k$ and $d$ which does not seem to have appeared in the literature previously. It is shown to be optimal for several particular cases. The requirement that the rationals lie on $M$ distinguishes this question from the more common context of (ambient) Diophantine approximation on manifolds, and necessitates the development of new techniques. Our main tool is an analogue of the Simplex Lemma for rationals lying on $M$ which provides new insights on the local distribution of rational points on nondegenerate manifolds.
\end{Abstract}

\maketitle


\section{Introduction and motivation}

In its classical form, the field of Diophantine approximation addresses questions regarding how well points $\xx \in \R^d$ can be approximated by rational points, 
where the quality $\| \xx - \rr \|$ of a rational approximation $\rr \in \Q^d$ is compared with the size of the denominator of $\rr$. The most fundamental theorem in the field is Dirichlet's theorem, dating back to 1842, which establishes a rate of approximation which holds for \emph{every} point.\Footnote{It was recently pointed out to us by Bugeaud that in the case of $\R$, this result is actually much older, coming directly from the theory of continued fractions (see e.g. \cite[displayed equation on p.28]{Legendre}).} The full significance of this result was realized two years later when Liouville established that certain real numbers, namely quadratic irrationals, do not admit a better rate of approximation. Liouville's result was later generalized to higher dimensions by Perron \cite{Perron}; together, these results show that Dirichlet's theorem is optimal in every dimension, in a sense to be made rigorous below.

Questions related to Diophantine approximation can be asked in a much broader context. Consider a closed subset $M$ of a complete metric space $(X,\dist)$, a countable subset $\QQ \subset X$ 
and a \emph{height} function $H \colon \QQ \to (0,\infty)$. Modifying the terminology recently introduced in \cite{FishmanSimmons5}, we will refer to such a collection as a \emph{Diophantine triple}, and denote it by $(M,\QQ,H)$. Given {such} a triple, we can then look for a function $\psi \colon (0,\infty) \to (0,\infty)$ such that for every $\xx \in M$, there exists a constant $C_{\xx}$ and a sequence $\rr_n \in \QQ$ ($n\in\N$) satisfying
\[
\rr_n \to \xx \quad \text{and} \quad \dist(\xx,\rr_n) \leq C_{\xx}\psi\big(H(\rr_n)\big) 
{\text{ for all } n} .
\]
{We will call} such a function $\psi$ a \emph{Dirichlet} function for approximation of points in $M$ by $\QQ$. {(Note that $M$ has to be contained in the closure of $\QQ$ in order for a Dirichlet function to exist.)}
Using this terminology, and defining the height $H \colon \Q^d \to (0,\infty)$ as $H(\pp/q) := q$, where $\pp\in\Z^d$ and $q\in\N$ are chosen so that $\pp/q$ is in reduced form, it follows from Dirichlet's theorem that 
\begin{equation}
\label{equationdir}
\psi_{1 + 1/d} \text{ is a 
Dirichlet function for the triple } (\R^d,\Q^d,H),
\end{equation}
where here and hereafter we write 
\begin{equation}
\label{psi_c}
\psi_c(t) := 1/t^c 
\end{equation}
{and use the max norm to define distance on $\R^d$.}
Once a Dirichlet function has been identified, a natural question is whether it is optimal in the following sense. Call a Dirichlet function $\psi$ \emph{optimal} if there does not exist another Dirichlet function $\phi$ for which $\frac{\phi(t)}{\psi(t)} \to 0$ as $t \to \infty$, i.e. that we cannot find a faster decaying Dirichlet function. The optimality of a Dirichlet function $\psi$ is {clearly} implied by{\Footnote{{and in many cases equivalent to, see \cite[\62]{FishmanSimmons5} for a thorough discussion}}
the set $\BA(M,\QQ,H,\psi)$ of \emph{$\psi$-badly approximable} points being nonempty, where we define
$$
\BA(M,\QQ,H,\psi):=\big\{\xx \in M: \exists\,c_\xx > 0\text{ such that }\dist(\xx,\rr) \geq c_\xx \psi\big(H(\rr)\big)\ \forall\,\rr \in \QQ\big\}.
$$
Thus the aforementioned result of Perron shows that $\psi_{1 + 1/d}$ is an 
optimal Dirichlet function for the triple $(\R^d,\Q^d,H)$. Note that Perron's result has been later strengthened by Schmidt, who showed that the set \eq{defbad}{
\BA_d :=\BA(\R^d,\Q^d,H,\psi_{1 + 1/d})} of badly approximable vectors is of full Hausdorff dimension, and, moreover, is a winning set. As a way to interpret these results, for a Diophantine triple $(M,\QQ,H)$ let us define the \emph{Dirichlet exponent} $\delta(M,\QQ,H)$ to be the supremum of $c > 0$ such that $\psi_c$ is a Dirichlet function for $(M,\QQ,H)$. The theorems of Dirichlet and Perron then imply that the Dirichlet exponent of $(\R^d,\Q^d,H)$ is equal to $1 + 1/d$.}

\smallskip

{Another class of examples of Diophantine triples is provided by the field of \emph{(ambient) Diophantine approximation on manifolds} (see, for instance, \cite{Beresnevich_Khinchin, Beresnevich_BA, KleinbockMargulis2}). Namely, let $M$ be a smooth submanifold of $\R^d$, and consider the triple $(M,\Q^d,H)$. Clearly, by \eqref{equationdir}, 
$\psi_{1 + 1/d}$ is still a Dirichlet function. 
On the other hand} it is easy to choose a manifold $M$, for example a rational affine subspace of $\R^d$, such that every point of $M$ admits a much better rate of approximation than the rate given by $\psi_{1 + 1/d}$. In order to rule out such behavior one is led to {impose a nondegeneracy condition (see Definition \ref{definitionnondegenerate}). And indeed, recently
Beresnevich \cite{Beresnevich_BA} proved
that 
for any real-analytic nondegenerate submanifold $M$ of $\R^d$, the set 
$$\BA(M,\Q^d,H,\psi_{1 + 1/d}) = M\cap\BA_d$$ has full Hausdorff dimension, thereby showing the optimality of $\psi_{1 + 1/d}$; earlier this was established by Badziahin and Velani \cite{BadziahinVelani2} for smooth nondegenerate planar curves. Consequently, the \emph{ambient Dirichlet exponent}
$$\delta^{\rm amb}_M := \delta(M,\Q^d,H)$$
is equal to $1 + 1/d$ whenever $M$ is nondegenerate.
} 


\smallskip

{The goal of this paper is to develop the theory of \emph{intrinsic approximation} on manifolds; that is, 
we will set $(X,\dist)$ to be $\R^d$ equipped with the max norm, $M$ a smooth submanifold of dimension $k \leq d$, $\QQ := \Q^d \cap M$, and $H(\pp/q) = q$ as before.}
So we are interested in the triple $(M, \Q^d \cap M, H)$. 
The field of intrinsic approximation has seen a lot of recent activity in many diverse contexts; see e.g.\ \cite{BFR, FishmanSimmons1, GGN1, GGN2, KleinbockMerrill, Schmutz}. {Most recently, 
in the companion paper \cite{FKMS2} we have obtained definitive results for $M$ being a nonsingular rational quadric hypersurface of $\R^d$ containing a dense set of rational points. In particular, it is proved there \cite[Theorem 5.1]{FKMS2} that for such $M$, $\psi_1$ is a Dirichlet function for intrinsic approximation, and, moreover, it is optimal because \cite[Theorem 4.5]{FKMS2} when $\psi = \psi_1$, the set 
$$
\BA_M(\psi) :=\BA(M,\Q^d\cap M,H,\psi)$$
has full Hausdorff dimension. Earlier this was established in \cite{KleinbockMerrill} for the unit sphere $M = S^{d-1}$.}

{Now let $M$ be an arbitrary $k$-dimensional nondegenerate smooth submanifold of $\R^d$. 
Is it possible to establish similar results?}
Clearly {for that one needs some information on the set of rational points inside $M$}. 
As an extreme case, 
{Dirichlet functions do not exist if $M \cap \Q^d = \emptyset$}. However, the following example shows that even if $\Q^d \cap M$ is dense in $M$, the ``quantitative denseness" which determines Diophantine properties {might depend} on $M$. 

\begin{example}
\label{exampletrivial}
Fix $n \geq 2$, let $\Phi_n: \R \to \R^2$ be the map $x \mapsto (x,x^n)$, {and let $C_n := \Phi_n(\R)$} denote the image curve in $\R^2$. Then it is easy to see that $\Q^2 \cap C_n = \Phi_n(\Q)$ and $H\big(\Phi_n(p/q)\big) = q^n$. Since $\psi_{1 + 1/d}$ is an optimal Dirichlet function on $\R$, this implies that $\psi_{2/n}$ is an optimal 
Dirichlet function for intrinsic approximation on $C_n$.
\end{example}

Note that as $n \to \infty$, the functions $\psi_{2/n}$ decay more and more slowly\Footnote{{Note that} the existence of these examples does not rule out the possibility that {some function decaying slower than all of the functions $\psi_{2/n}$, e.g.\ $\psi(t) = 1/\log(t)$}, is a Dirichlet function for every nondegenerate manifold whose intrinsic rationals are dense. It would be interesting to investigate this question further.}, {yet none of them 
decays} faster than $\psi_1$. So we can ask: does there exist a nondegenerate curve 
{with an intrinsic Dirichlet function decaying} faster than $\psi_1$? {Our first theorem shows that} 
the answer is no. 
{More generally, it establishes} an upper bound on the rate of decay of a Dirichlet function for {intrinsic approximation on} \emph{any} $k$-dimensional {nondegenerate} submanifold $M \subset \R^d$. 
{This is done by exhibiting for every $k \le d$ an explicit constant $c = c(k,d)$ such that for any $M$ as above, the set $\BA_M(\psi_c)$ has full Hausdorff dimension. }

The constant $c(k,d)$ is arrived at via combinatorial considerations and to the authors' knowledge has not appeared previously. In some sense, it represents the heart of the paper. Given the natural way it arises in a volume computation (cf. the proof of Claim \ref{claimNkd}), we suspect that it will play a significant role in intrinsic Diophantine approximation moving forward.
{Here is how it is defined:}

\begin{notation}
\label{notationkd}
Denote
$
[n,m] := \binom{n + m}{m} = \binom{m + n}{n}
$, and for 
$1\leq k\leq d$, let $n_{k,d}\in\N$ be maximal such that
\begin{equation}
\label{mnkddef}
d = [k-1,1] + [k - 1,2] + \ldots + [k - 1,n_{k,d}] + m_{k,d}
\end{equation}
for some $m_{k,d}\geq 0$, and let $m_{k,d}$ be the unique integer satisfying \eqref{mnkddef}. Let
\[
N_{k,d} :=1[ k-1,1] + 2[k - 1,2] + \ldots + n_{k,d}[k - 1,n_{k,d}] + (n_{k,d} + 1)m_{k,d},
\]
and define $c(k,d) := (d + 1)/N_{k,d}$.
\end{notation}

{ We can now state our main theorem:}


\begin{theorem}
\label{theoremmain}
Let $M \subset \R^d$ be a {nondegenerate} submanifold of dimension $k$. Then $$\dim\big(\BA_M(\psi_{c(k,d)})\big) = k.$$ 
{Consequently,}
no Dirichlet function {for intrinsic approximation on} $M$ decays faster than $\psi_{c(k,d)}$. 
\end{theorem}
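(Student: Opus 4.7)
The plan is to construct, via an iterated Cantor-tree construction, a full-dimensional Cantor subset of $\BA_M(\psi_{c(k,d)})$. The central ingredient is an intrinsic simplex lemma quantifying the spacing of rationals on $M$; the exponent $c(k,d)$ will arise precisely as the balance point in the underlying lattice-point count. Since the conclusion is local, I work in a coordinate chart $f \colon U \subset \R^k \to \R^d$ parameterizing a neighborhood of $M$ around a fixed nondegenerate base point $\xx_0$.

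The intrinsic simplex lemma I aim for reads: there is a constant $K > 0$ such that for every $T \geq 1$ and every $r \leq K T^{-1/N_{k,d}}$, all rationals $\pp/q \in M$ with $q \leq T$ lying in $B(\xx_0, r)$ are contained in a single proper algebraic subvariety of $M$. To prove it, Taylor-expand $f$ around $\xx_0$ up to order $n_{k,d}+1$. The defining identity $\sum_{j=1}^{n_{k,d}} [k-1,j] + m_{k,d} = d$ combined with nondegeneracy lets one adjoin $m_{k,d}$ of the degree-$(n_{k,d}+1)$ partial derivatives of $f$ to those of orders $1,\dots,n_{k,d}$ so as to obtain a basis of $\R^d$. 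A rational $\pp/q = f(h)$ with $h$ of size $O(r)$ then decomposes via this basis into blocks whose degree-$j$ component, multiplied by $q$, is an integer of size $O(T r^j)$ up to a small Taylor remainder. The total volume of the associated integer box is of order
\[
T \cdot \prod_{j=1}^{n_{k,d}} (T r^j)^{[k-1,j]} \cdot (T r^{n_{k,d}+1})^{m_{k,d}} \;=\; T^{d+1} r^{N_{k,d}},
\]
which is $\ll 1$ by the choice of $r$; a Minkowski/pigeonhole argument then forces any two such rationals to satisfy a common nontrivial integer relation on their jets, producing the desired algebraic constraint.

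Given the simplex lemma, a standard Cantor-tree construction completes the proof. Along geometric scales $T_n$ with $r_n \asymp T_n^{-1/N_{k,d}}$, at stage $n$ cover the surviving set with balls of radius $r_n$ on $M$, and from each ball remove a tubular neighborhood of radius $c_0\,\psi_{c(k,d)}(T_n)$ around every rational of denominator $\leq T_n$. The simplex lemma confines all such rationals to a single proper subvariety of $M$, so the removal has positive codimension inside each ball; choosing $c_0$ small leaves a definite fraction of each ball available for the next generation of balls of radius $r_{n+1}$. Since a ball of radius $r_n$ contains $\asymp (r_n/r_{n+1})^k$ disjoint sub-balls of radius $r_{n+1}$, the mass distribution principle applied to the natural Cantor measure yields Hausdorff dimension $k$, while the limit set lies in $\BA_M(\psi_{c(k,d)})$ by construction.

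The principal obstacle is the intrinsic simplex lemma itself: converting nondegeneracy into the sharp volume bound demands a uniform quantitative inverse function theorem for the full jet map and a careful choice of which $m_{k,d}$ supplementary order-$(n_{k,d}+1)$ derivatives to adjoin, so as not to lose a factor in the dimension count that would weaken the exponent. A secondary difficulty is that the lemma constrains rationals to an algebraic subvariety of $M$ rather than to an affine subspace of $\R^d$, so the Cantor recursion cannot simply quotient by a codimension-one flat; instead one must pigeonhole over the polynomially many subvarieties that can arise at each stage and show that each contributes only a small loss to the surviving mass.
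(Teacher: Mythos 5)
Your volume count $T^{d+1}r^{N_{k,d}}$ is indeed the computation at the heart of the matter (it is the content of Claim \ref{claimNkd} in the paper), but your route to the intrinsic simplex lemma has a genuine gap. You claim that nondegeneracy lets you complete the partial derivatives of orders $1,\dots,n_{k,d}$ of the chart by $m_{k,d}$ partials of order $n_{k,d}+1$ to a basis of $\R^d$. Nondegeneracy only guarantees that the partials up to \emph{some} order $D$ span $\R^d$; it does not make the low-order partials independent, and $D$ may far exceed $n_{k,d}+1$. For instance the curve $t\mapsto(t,t^3,t^4)$ in $\R^3$ is nondegenerate at every point, yet at $t=0$ the derivatives of orders $\le 3$ span only a plane, while $n_{1,3}=3$, $m_{1,3}=0$; your adapted jet basis does not exist there, and near such points the constants in your ``uniform quantitative inverse function theorem'' degenerate, threatening the uniformity in $\ss$ and $\rho$ that the construction needs at every scale. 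The paper's Lemma \ref{lemmasimplex} avoids this entirely: one applies Taylor not to the chart but to the single multilinear function $f(\ss_1,\dots,\ss_{d+1})=\det[(1,\Psi(\ss_1))\cdots(1,\Psi(\ss_{d+1}))]$, whose derivatives of order $<N_{k,d}$ vanish identically on the diagonal by pure combinatorics (no nondegeneracy is used at all), so compactness gives $|f|\le C\rho^{N_{k,d}}$ uniformly; comparing with the lower bound $1/\prod q_i$ for affinely independent rationals puts all the relevant rationals in an affine \emph{hyperplane} of $\R^d$. Your pigeonhole, carried out correctly, yields the same hyperplane conclusion (it is an integer determinant bound, not an ``integer relation on the jets'', whose coefficients are generally irrational), so the ``secondary difficulty'' you flag about many subvarieties is an artifact. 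Also the scaling is off as written: you need $r\le K\,T^{-(d+1)/N_{k,d}}$, equivalently $q\le\kappa\,r^{-1/c(k,d)}$, for $T^{d+1}r^{N_{k,d}}$ to be small and for the exponent $c(k,d)=(d+1)/N_{k,d}$ to emerge.

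The second gap is in the Cantor step, and it is exactly where nondegeneracy genuinely enters. You remove the $c_0\psi_{c(k,d)}(T_n)$-neighborhood of the exceptional set and assert that, having ``positive codimension'', it leaves a definite fraction of each ball. But what is removed is $\Psi^{-1}$ of a neighborhood of a hyperplane $L\subset\R^d$, and without a quantitative transversality input this can swallow an entire parameter ball at the current scale (the piece of $M$ may hug $L$ to high order); positive codimension gives no fraction that is uniform over all scales, centers and hyperplanes. What is needed is a statement that for every hyperplane $L$ and every ball $B(\ss,\rho)$ in the compact piece, the $\epsilon\rho$-neighborhood of $\Psi^{-1}(L)$ leaves room for a sub-ball of radius comparable to $\rho$ (or has measure at most a fixed proportion). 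This is precisely what uniform $D$-nondegeneracy buys, and proving it is what the paper's Section \ref{sectiongames} is for: the function $\ww\cdot(1,\Psi(\cdot))$ is shown to be uniformly comparable to a polynomial of degree $\le D$ (Lemmas \ref{lemmaapproximatelevelset1}--\ref{lemmaapproximatelevelset2}), which makes Alice's deletion legal in the levelset game and hence reduces to the hyperplane game; see also part (i) of the proof of Theorem \ref{theoremBAwinning}. You would need a lemma of this type to make ``a definite fraction survives'' honest. Finally, note that the paper's game-theoretic route gives more than your one-off Cantor set: $\BA_M(\psi_{c(k,d)})$ is hyperplane winning relative to $M$, hence of full dimension in every open piece and stable under countable intersections.
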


{As a way to interpret this theorem, we can consider the \emph{intrinsic Dirichlet exponent} of $M$, that is,
$$
\delta^{\rm int}_M := \delta(M,\Q^d\cap M,H).$$
The above theorem implies that $\delta^{\rm int}_M \le c(k,d)$ if $M$ is a nondegenerate submanifold of $\R^d$ of dimension $k$.}


\smallskip

The following remarks may help shed some light on the 
constant {$c(k,d)$}: 

\begin{itemize}
\item 
{When $k = d$, one has $n_{k,d} = 1$ and $m_{k,d} = 0$; thus $N_{k,d} = d$ and 
$c(k,d) = 1 + 1/d$. 
Therefore the $k = d$ case of Theorem \ref{theoremmain} coincides with Schmidt's theorem, that is, with full Hausdorff dimension of $\BA_d$.}

\item {When $k = d-1$, one has $n_{k,d} = 1$ and $m_{k,d} = 1$; thus $N_{k,d} = d+1$ and 
$c(k,d) = 1$. In particular, this gives a different proof of \cite[Theorem 4.5]{FKMS2}. The latter theorem establishes full Hausdorff dimension of $\BA_M(\psi_1)$, and hence the optimality of Dirichlet function $\psi_1$, for any nonsingular rational quadric hypersurface $M$. Our result extends this to an arbitrary nondegenerate hypersurface $M$, in particular showing that whenever $\psi_1$ is a Dirichlet function for intrinsic approximation on $M$, it must be optimal.} 

\item The reader can check that {if $d$ is fixed, then $c(k,d)$ is strictly increasing with respect to $k$. This confirms the intuitive logic that 
higher-dimensional manifolds 
may have more intrinsic rationals and therefore points on these manifolds should be expected to be better approximable by intrinsic rationals, so their intrinsic Dirichlet exponent 
should be higher. This also implies that for $k < d$, we have $c(k,d) \leq 1$. Therefore for proper nondegenerate submanifolds $M\subset \R^d$, the bound on $\delta^{\rm int}_M$ given by Theorem \ref{theoremmain} is strictly stronger than the ``trivial" bound $$\delta^{\rm int}_M \le \delta^{\rm amb}_M = 1 + 1/d$$
given by considering all rational points, not just intrinsic ones, and using the aforementioned result of Beresnevich on existence of badly approximable vectors on nondegenerate manifolds. }

\item 
It is also easy to compute that when $k = 1$, one has $n_{k,d} = d$ and $m_{k,d} = 0$; thus $N_{k,d} = d(d+1)/2$ and 
$c(1,d) = 2/d$. For other values of $k,d$ the computation is more involved. Below is a table of values of $c(k,d)$ for $k,d\le 6$, with rows corresponding to $k$ and columns to $d$:
\end{itemize}

\begin{center}
\begin{tabular}{|c|cccccc|}
\hline
 & 1 & 2 & 3 & 4 & 5 & 6\\
\hline
1 & 2 & 1 & 2/3 & 1/2 & 2/5 & 1/3\\
2 && 3/2 & 1 & 5/6 & 3/4 & 7/11\\
3 &&& 4/3 & 1 & 6/7 & 7/9\\
4 &&&& 5/4 & 1 & 7/8\\
5 &&&&& 6/5 & 1\\
6 &&&&&& 7/6\\
\hline
\end{tabular}
\end{center}

\ignore{
Although the theorem as stated does not cover the case of $k=d$, our formula in Notation \ref{notationkd} does, and yields that $c(d,d) = 1 + \frac1{d}$. Therefore, considering approximation of $\R^d$ by $\Q^d$ as a special case of intrinsic approximation, the function $\psi_{c(d,d)} = \psi_{1 + 1/d}$ is precisely the optimal Dirichlet function furnished by Dirichlet's theorem \eqref{equationdir}. Furthermore, the computation yields $c(1,d) = 2/d$, and thus taking $d=2$, we see that no nondegenerate curve in $\R^2$ can have a Dirichlet function which decays faster than $\psi_{2/2} = \psi_1$, answering our previous question. Moreover, the formula for $c(k,d)$ given in Notation \ref{notationkd} shows that the greater the gap between the submanifold and the ambient space, the tighter the control on the rate of decay of a Dirichlet function.
}

\ignore{
Two important cases of the computation of $c(k,d)$ (see Notation \ref{notationkd}) are that $$c(d,d) = 1 + \frac{1}{d} \quad \text{and} \quad c(1,d) = \frac{2}{d}.$$ In particular, treating the case of $M = \R^d$ and $\QQ = \Q^d$ as a special case of intrinsic approximation, we see that $\psi_{c(d,d)} = \psi_{1 + 1/d}$ is precisely the optimal Dirichlet function furnished by Dirichlet's theorem \ref{theoremdir}. Furthermore, letting $d=2$ in the second computation, we see that the optimal Dirichlet function for any nondegenerate curve in $\R^2$ cannot decay faster than $\psi_{2/2} = \psi_1$, answering our previous question.

Recently, the authors \cite{FKMS2} studied intrinsic approximation in the setting of quadric hypersurfaces. There it is shown (Theorem 5.1) that the function $\psi_1$ is an optimal Dirichlet function with respect to approximation by rationals lying on the hypersurface. Since in that case $k=d-1$, we see that once again our theorem yields $\psi_{c(d-1,d)}= \psi_1$. 
}

\ignore{To show that no Dirichlet function can decay faster than $\psi_{c(k,d)}$, it will suffice to demonstrate the existence of points which are badly approximable with respect to the function $\psi_{c(k,d)}$. Recall that in our setup, the set of approximants is $\QQ := \Q^d \cap M$.
\begin{definition}
\label{definitionba} We say that $\xx \in M$ is $\psi$-\emph{intrinsically badly approximable}, written $\xx \in \BA_M(\psi)$, if there exists a constant $\epsilon(\xx) > 0$ such that for every $\rr \in \Q^d \cap M$, $\dist(\xx,\rr) \geq \epsilon(\xx) \psi(H(\rr))$. 
\end{definition}
Note that we require $\xx \in M$, and that in the particular case of $M = \R^d$ and $\psi = \psi_{1 + 1/d}$, this reduces to the usual definition of $\BA_d$.}

{Note that 
for some values of $k,d$
we can construct a $k$-dimensional nondegenerate submanifold $M$ of $\R^d$} such that $\psi_{c(k,d)}$ is a Dirichlet function for intrinsic approximation on $M$; {for those} cases, Theorem \ref{theoremmain} demonstrates the optimality of this Dirichlet function 
{and shows} 
the constant $c(k,d)$ to be best possible. 
This is formalized in the following definition:

\begin{definition}
\label{definitionmaximallyapproximable}
We will call a nondegenerate submanifold $M \subset \R^d$ of dimension $k$ \emph{maximally approximable} if $\psi_{c(k,d)}$ is a Dirichlet (and hence an optimal Dirichlet) function.
\end{definition}
Example \ref{exampletrivial} shows that when $n > 2$, the curve $C_n$ is \emph{not} a maximally approximable submanifold of $\R^2$. So we know that $\psi_{c(k,d)}$ is not a Dirichlet function for some submanifolds. Nonetheless, our theorem immediately suggests the following question:
{for which} $1 \leq k \leq d$ does there exist a {$k$-dimensional} maximally approximable submanifold 
{of $\R^d$}? 
We will collect examples and give some partial 
{answers} to this question in Section \ref{sectionexamples}. 

\smallskip

We will prove Theorem \ref{theoremmain} by showing that $\BA_M(\psi_{c(k,d)})$ is 
a winning set of a certain game. Recall that in order to prove {the special case $M = \R^d$ of Theorem \ref{theoremmain},} 
Schmidt developed a powerful tool now known as Schmidt's game, a two-player game whose winning sets enjoy many remarkable properties, including having full dimension. In recent years, many variants of the game have been introduced, of particular note the \emph{absolute}
{game 
of} McMullen \cite{McMullen_absolute_winning}, and the \emph{hyperplane} absolute 
{game} introduced in \cite{BFKRW} (see Section \ref{sectiongames} below for more details). It is this last variant which we will 
{utilize} here. Note that a key ingredient in Schmidt's proof 
is the Simplex Lemma, whose original statement and proof go back to Davenport and Schmidt \cite[p.\ 57]{Schmidt3}. A crucial step in our proof, and one which we believe to be of independent interest, is establishing an analogue of the Simplex Lemma for rationals constrained to lie on a fixed nondegenerate manifold of $\R^d$ (Lemma \ref{lemmasimplex}). We also develop new tools for utilizing the hyperplane absolute game, which enables us to show that $\BA_M(\psi_{c(k,d)})$ is hyperplane absolute winning. 

\smallskip

{Now let $\lambda_M$ be a smooth volume measure on $M$. It is worthwhile to point out that the conclusion of Theorem \ref{theoremmain}, that is, full Hausdorff dimension of the set $\BA_M(\psi_{c(k,d)})$, cannot in general be upgraded to positive measure. Indeed, it follows from Khintchine's theorem \cite[Theorem III.3A]{Schmidt3} that the Lebesgue measure of the set $\BA_d$ is zero. And a similar result for $\BA_M(\psi_1)$ where $M$ is a nonsingular rational quadric hypersurface is a special case of \cite[Theorem 6.2]{FKMS2}.}

{However, the situation is different if the exponent $c(k,d)$ gets replaced with a slightly bigger one. For an arbitrary Diophantine triple $(M,\QQ,H)$ and $c > 0$ let us introduce the set
\eq{defvwa}{
\begin{split}
&\VWA(M,\QQ,H,\psi_c)
:=M\smallsetminus \bigcup_{\epsilon > 0}\,\BA(M,\QQ,H,\psi_{c+\epsilon})\\
= & \left\{ \xx\in M :\ \begin{aligned} \exists\,\epsilon > 0\text{ and a sequence }\rr_n \in \QQ\text{ such that } \rr_n \to \xx \\ \text{ and }\dist(\xx,\rr_n) \leq \psi_{c+\epsilon}\big(H(\rr_n)\big)
\text{ for all $n$}\qquad\end{aligned}\right\}
\end{split}
}
of \emph{$\psi_c$-very well approximable} points.}
{The fact that the set 
$$
\VWA_d := \VWA(\R^d,\Q^d,H,\psi_{1 + 1/d})
$$
of very well approximable vectors in $\R^d$ is Lebesgue null is an easy consequence of the Borel--Cantelli Lemma. A similar statement for ambient approximation -- namely, that $\lambda_M(\VWA_d) = 0$, where $M\subset \R^d$ is a nondegenerate submanifold -- is much tricker. It has been conjectured by Sprind\v uk in 1980 \cite[Conjecture $H_1$]{Sprindzuk3} and demonstrated by Margulis and the second-named author in 1998 \cite{KleinbockMargulis2}. Later it was shown that $\VWA_d$ is $\mu$-null for other interesting measures $\mu$ on $\R^d$. In particular, it follows from \cite[Theorem 1.1 and Proposition 7.3]{KLW} that $\mu(\VWA_d) = 0$ whenever $\mu$ is an \emph{absolutely friendly} measure (see Definition \ref{definitionAF}) on a submanifold $M$ as above. }

{Our second main theorem gives an intrinsic approximation analogue of the above statement:}

\begin{theorem}[Restated as Theorem \ref{theoremextremal}]
\label{theoremvwanullset}
Let $M \subset \R^d$ be a submanifold of dimension $k$. If $\lambda_M$-almost every point of $M$ is nondegenerate, then 
\eq{defvwam}
{{\VWA_M(\psi_{c(k,d)}) := \VWA(M,\Q^d\cap M,H,\psi_{c(k,d)})}}
 is a $\lambda_M$-nullset. More generally, let $\Psi:U\to M$ be a local parameterization of $M$, let $\mu$ be an absolutely friendly measure on $U$, and let $\nu = \Psi[\mu]$. If $\nu$-almost every point of $M$ is nondegenerate, then $\VWA_M(\psi_{c(k,d)})$ is a $\nu$-nullset.
\end{theorem}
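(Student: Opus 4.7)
The plan is a Borel--Cantelli argument leveraging the intrinsic Simplex Lemma (Lemma~\ref{lemmasimplex}) and the absolute decay of $\nu$. Write $c := c(k,d)$ and decompose $\VWA_M(\psi_c) = \bigcup_{n \in \N} V_n$, where $V_n$ is the set of points $\psi_{c + 1/n}$-intrinsically approximable; it suffices to prove $\nu(V_n) = 0$ for each $n$. Since
\[
V_n \subset \limsup_{j \to \infty} A_{2^j},\qquad A_T := \bigcup_{\substack{\rr \in \Q^d \cap M \\ T \le H(\rr) < 2T}} B\bigl(\rr,\, T^{-(c+1/n)}\bigr),
\]
I would, after restricting $\xx$ to a fixed compact coordinate chart of $M$, reduce to proving $\sum_j \nu(A_{2^j}) < \infty$ via Borel--Cantelli.

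To estimate $\nu(A_T)$, I would cover the relevant piece of $M$ by balls $B$ of radius $r_T \asymp T^{-\beta}$ with $\beta > 0$ to be tuned. At this scale the intrinsic Simplex Lemma will assert that every intrinsic rational in $B$ of height at most $2T$ lies on a single proper rational affine subspace $\mathcal{L}_B \subsetneq \R^d$, provided $\beta$ is chosen so that the volume count encoded by \eqref{mnkddef} is saturated; this is exactly the point at which the constant $N_{k,d}$, and hence the exponent $c(k,d) = (d+1)/N_{k,d}$, is forced. Consequently $A_T \cap B$ lies in the $T^{-(c+1/n)}$-neighborhood of $\mathcal{L}_B$. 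Choosing any hyperplane through $\mathcal{L}_B$ and invoking the absolute decay of $\nu$ then yields
\[
\nu(A_T \cap B) \le C\bigl(T^{-(c+1/n)}/r_T\bigr)^{\alpha}\nu(B)
\]
for some $\alpha > 0$. Summing over the $\asymp r_T^{-k}$ balls of the cover and optimizing $\beta$ should give a bound $\nu(A_T) \le C\,T^{-\gamma/n}$ for some $\gamma > 0$, which is summable along $T = 2^j$.

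The principal obstacle is the second step: locating the critical scale at which the intrinsic Simplex Lemma just barely applies, and verifying that this scale produces precisely the exponent $c(k,d)$ rather than something larger. The combinatorial identity \eqref{mnkddef} is what makes this work: the local count of rationals of height $\le T$ on $M$ forces a collapse onto a proper affine subspace exactly at scale $r_T \asymp T^{-1/N_{k,d}}$, and any sharper choice would fail either the Simplex Lemma hypothesis or the Borel--Cantelli summability. A secondary, and potentially delicate, technical point is that $\mathcal{L}_B$ may have codimension greater than $1$ in $\R^d$, so the hyperplane-only decay provided by absolute friendliness may be lossy in this step; handling this should require either iterating the hyperplane estimate along a descending flag of rational affine subspaces ending at $\mathcal{L}_B$, or upgrading the absolute decay to hold along lower-dimensional affine subspaces using the nondegeneracy hypothesis on $\nu$-a.e. point of $M$.
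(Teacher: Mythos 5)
Your overall skeleton --- dyadic height ranges, a cover of a compact piece at scale $r_T \asymp T^{-c(k,d)}$, the intrinsic Simplex Lemma to collapse all intrinsic rationals of height at most $2T$ in each ball onto a single hyperplane, and a Borel--Cantelli summation --- is the same as in the paper's proof of Theorem \ref{theoremextremal}, and your tuning of the scale ($\beta = c(k,d)$, with the slack $1/n$ providing summability) is correct. The genuine gap is the measure estimate: you invoke ``the absolute decay of $\nu$'' against an ambient affine hyperplane of $\R^d$. The hypothesis only gives absolute decay of $\mu$ on the parameter domain $U \subset \R^k$ with respect to hyperplanes of $\R^k$; the pushforward $\nu = \Psi[\mu]$ is essentially never absolutely decaying in $\R^d$ when $k < d$. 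For instance, for arclength on the parabola $\{y = x^2\}$ and $\LL$ the tangent line at the center of $B(\xx,\rho)$, one has $\nu\big(\LL^{(\epsilon\rho)}\cap B(\xx,\rho)\big) \asymp_\times \min(\sqrt{\epsilon\rho},\rho)$, which is not $O(\epsilon^\alpha)\,\nu\big(B(\xx,\rho)\big)$ uniformly in $\rho$. Moreover, your main estimate as written makes no use of nondegeneracy, yet without it the statement is false (take $M$ inside a rational hyperplane: the simplex-lemma hyperplane can locally contain $M$, and no decay is available). Pulling back to $U$ does not repair this directly either, since $\Psi^{-1}(\LL_B\cap M)$ is not an affine hyperplane of $\R^k$, so the plain absolute decay of $\mu$ does not apply to it.

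The missing technical content is exactly the paper's Claim \ref{claimgammaalpha}: write the hyperplane as the zero set of $f = P\circ\emb$ with $\emb = (1,\Psi)$ and $P$ linear, use \cite[Proposition 7.3]{KLW} --- absolute friendliness of $\mu$ together with $D$-nondegeneracy of the parameterization --- to obtain a $(C,\alpha)$-good--type bound $\mu\big(f^{-1}(-\epsilon\sup_B|f|,\epsilon\sup_B|f|)\cap B\big) \lesssim_\times \epsilon^{\alpha}\,\mu(B)$, and then show that the $\rho_n^{1+\gamma}$-neighborhood of $Z_f$ inside $B$ is contained in such a sublevel set. That containment is itself nontrivial: after rescaling by $T_{\xx,\rho}$ one needs the lower bound $\|f\circ T_{\xx,\rho}\|_{\CC^D,B(\0,2)} \lesssim_\times \sup_{B(\0,1)}|f\circ T_{\xx,\rho}|$ (equation \eqref{ETSKLW3}), which the paper extracts from the polynomial-comparison Lemmas \ref{lemmaapproximatelevelset1} and \ref{lemmaapproximatelevelset2}; this is precisely where nondegeneracy enters quantitatively, and it is absent from your sketch. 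By contrast, your secondary worry about $\LL_B$ having codimension greater than one is a non-issue: Lemma \ref{lemmasimplex} already produces a hyperplane, and in any case an $\epsilon$-neighborhood of an affine subspace is contained in the $\epsilon$-neighborhood of any hyperplane containing it, so no iteration along a flag is needed.
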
\

\ignore{We will prove Theorem \ref{theoremmain} by showing that $\BA_M(\psi_{c(k,d)})$ is nonempty, and in fact has full Hausdorff dimension as a subset of $M$. Before stating our theorem, let us briefly recall the analogous result in the classical case $(\R^d,\Q^d,H)$. That the set of badly approximable points is nonempty was shown in dimension one by Liouville, and in higher dimensions by Perron, who constructed countably infinitely many badly approximable points in every dimension. The full dimensionality of $\BA_d$ in the case of $d=1$ was established by Jarn\'ik \cite{Jarnik1}, and for general dimension $d$ by Schmidt \cite{Schmidt2}. To achieve this, Schmidt developed a powerful tool now known as Schmidt's game, a two-player game whose winning sets enjoy many remarkable properties, including having full dimension. In recent years, many variants of the game have been introduced, of particular note the \emph{absolute winning} variant introduced by McMullen \cite{McMullen_absolute_winning}, and the \emph{hyperplane} absolute winning variant introduced in \cite{BFKRW} (see Section \ref{sectiongames} below for more details). It is this last variant which we will be interested in here. A key ingredient in the proof that 
$\BA_d$ is winning is the Simplex Lemma, whose original statement and proof go back to Davenport and Schmidt \cite[p.57]{Schmidt3}. In $\R^d$, this remarkable lemma states that the set of rational points in any ball $B(\xx,\rho)$ whose denominators are less than $\epsilon \rho^{-d/(d+1)}$ is contained in an affine hyperplane, where $\epsilon > 0$ is small and depends only on the dimension $d$. 

A crucial step in our proof, and one which we believe to be of independent interest, is establishing an analogue of the Simplex Lemma for rationals constrained to lie on a fixed nondegenerate manifold of $\R^d$ (Lemma \ref{lemmasimplex}). We also develop new tools for utilizing the hyperplane absolute game, which enables us to show that $\BA_M(\psi_{c(k,d)})$ is hyperplane absolute winning. Since hyperplane absolute winning sets have full dimension, we obtain
\begin{theorem}
\label{theorembafulldim}
Let $M \subset \R^d$ be a nondegenerate submanifold of dimension $k$. Then $$\dim(\BA_M(\psi_{c(k,d)})) = k.$$ 
\end{theorem}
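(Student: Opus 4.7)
The plan is to deduce Theorem \ref{theoremmain} from the stronger assertion that $\BA_M(\psi_{c(k,d)})$ is a hyperplane absolute winning (HAW) subset of $M$, in the sense of the game referenced in the introduction. Since HAW subsets of a nondegenerate $k$-manifold are known to have full Hausdorff dimension $k$ (using invariance of HAW under sufficiently regular parameterizations together with full dimensionality of HAW sets in $\R^k$), establishing this stronger property immediately yields $\dim \BA_M(\psi_{c(k,d)}) = k$, and in particular shows that no Dirichlet function for intrinsic approximation on $M$ decays faster than $\psi_{c(k,d)}$.

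The central engine of the proof will be an intrinsic Simplex Lemma (Lemma \ref{lemmasimplex}), which I expect to take roughly the following form: there exists a constant $\epsilon = \epsilon(M) > 0$ such that for every sufficiently small ball $B(\xx,r)$ centered on $M$, all intrinsic rationals $\rr \in \Q^d \cap M \cap B(\xx,r)$ with height $H(\rr) \le \epsilon r^{-1/c(k,d)}$ lie on a single affine hyperplane of $\R^d$. The exponent $c(k,d) = (d+1)/N_{k,d}$ should be forced by a jet-theoretic volume count: in a local chart $\Psi \colon U \to M$, monomials of degree exactly $j$ in $k$ variables span a space of dimension $[k-1,j]$, and the decomposition \eqref{mnkddef} of $d$ as a sum of these dimensions (plus a remainder $m_{k,d}$) exactly matches the number of ambient coordinate functions one can separate using Taylor jets of increasing order along $M$. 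The weighted sum defining $N_{k,d}$ then records the cumulative vanishing order needed to separate $d+1$ generic points of $M$, and balancing this against the denominator growth of the approximants is what produces the exponent.

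Given the intrinsic Simplex Lemma, Alice's HAW strategy is standard bookkeeping. Fix a geometric scale parameter $\beta \in (0,1)$ and, at stage $n$ of the game, let $B_n$ of radius $r_n$ be Bob's current ball (with $r_n$ tending to zero geometrically). Apply the lemma to extract an affine hyperplane $L_n \subset \R^d$ containing every intrinsic rational in $B_n$ with denominator at most $\epsilon r_n^{-1/c(k,d)}$, and have Alice delete a $\beta r_n$-neighborhood of $L_n$. The unique surviving point $\xx \in M$ is then separated from every intrinsic rational $\rr$ in the relevant height window by distance $\ge \beta r_n \ge c H(\rr)^{-c(k,d)}$ for a constant $c$ depending only on $\epsilon$ and $\beta$, so $\xx \in \BA_M(\psi_{c(k,d)})$ after absorbing a uniform constant into $c_\xx$ and handling the finitely many rationals of small height separately.

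The main obstacle is proving the intrinsic Simplex Lemma itself. The classical Schmidt--Davenport version in $\R^d$ is a Vandermonde volume argument: $d+1$ affinely independent rationals with denominators at most $Q$ carve out a simplex of volume at least $Q^{-(d+1)}$, which cannot fit in a ball of radius $r$ once $r \ll Q^{-(d+1)/d}$. In the intrinsic setting the naive affine span of $B(\xx,r) \cap M$ is already low-dimensional, so one cannot run this argument directly; instead, one must leverage nondegeneracy to extract, via Taylor expansion of $\Psi$ at a rational reference point, auxiliary integer polynomial conditions of controlled degree coming from higher-order jets of $M$, then run a determinant argument against these conditions. The number of monomial relations so produced, weighted by the degrees in which they arise, is precisely $N_{k,d}$, which is why the extremal exponent comes out to $(d+1)/N_{k,d}$ and not the naive ambient exponent $(d+1)/d$. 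I expect the bulk of the paper's technical work, and the origin of the constant $c(k,d)$, to live in this step.
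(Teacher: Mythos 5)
Your overall architecture matches the paper's: an intrinsic Simplex Lemma giving the exponent $c(k,d)=(d+1)/N_{k,d}$ via the combinatorial count of jet orders, followed by a winning-strategy argument and the full-dimension property of winning sets. But there is a genuine gap in the game-theoretic step, which you dismiss as ``standard bookkeeping.'' The hyperplane $L_n$ produced by Lemma \ref{lemmasimplex} is an affine hyperplane of the \emph{ambient} space $\R^d$, whereas the hyperplane absolute game that certifies Hausdorff dimension $k$ for subsets of $M$ is played in a chart $U\subset\R^k$ (a set is winning relative to $M$ when $\Psi^{-1}(S)\cup(\R^k\smallsetminus K)$ is hyperplane winning in $\R^k$). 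In the chart, the object Alice needs to delete is $\Psi^{-1}(L_n)$, which is a curved hypersurface of $\R^k$, not an affine hyperplane, so the move you prescribe is not legal in the hyperplane game; and if instead you imagine the game played ambiently in $\R^d$ with deletions of $L_n$, the surviving point need not lie on $M$ at all, and ambient winning would not give dimension $k$ relative to $M$. The paper spends all of Section \ref{sectiongames} closing exactly this gap: it introduces the algebraic-set and levelset variants of the game, proves they are equivalent to the hyperplane game (Theorem \ref{theoremwinningequivalence}), and then in Theorem \ref{theoremBAwinning} has Alice delete neighborhoods of the level sets $Z_f$ with $f(\ss)=\ww\cdot(1,\Psi(\ss))$, checking the legality condition \eqref{CDbounds} for such $f$. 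Without this (or some substitute, e.g.\ re-proving full dimension for a ``curved-hypersurface'' game from scratch), your strategy does not establish that $\BA_M(\psi_{c(k,d)})$ is winning in any sense known to imply full dimension on $M$.

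Relatedly, you have misplaced the role of nondegeneracy. The paper's Simplex Lemma requires no nondegeneracy at all: the determinant $f(\ss_1,\dots,\ss_{d+1})=\det[(1,\Psi(\ss_1))\cdots(1,\Psi(\ss_{d+1}))]$ automatically vanishes to order at least $N_{k,d}$ along the diagonal for \emph{any} $k$-dimensional submanifold, and this one-sided bound, combined with the integrality $|f|\geq \prod_i q_i^{-1}$, is all that is used. Nondegeneracy enters elsewhere: $D$-nondegeneracy at every point of (an open piece of) $M$ is what bounds $\|f\|_{\CC^D,B_n}$ from below for $f=\ww\cdot(1,\Psi(\cdot))$, making Alice's levelset moves legal with a uniform constant $C_1$, and it guarantees a nonempty relatively open set $M_D$ on which to run the game before invoking Proposition \ref{propositionwinningproperties}(iii). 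Your heuristic for the Simplex Lemma (Taylor expansion at a rational reference point and ``auxiliary integer polynomial conditions'') is vaguer and differently organized than the paper's single multi-point determinant argument, but since you defer to the lemma rather than prove it, the substantive missing ingredient remains the game-equivalence machinery described above.
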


As noted previously, this implies Theorem \ref{theoremmain}. 
\smallskip

Before stating our last result, let us briefly return to the classical case $(\R^d,\Q^d,H)$ once more. As remarked previously, the set $\VWA_d$ of points which are $\psi_c$-approximable for some $c > 1 + 1/d$ is a Lebesgue null set. If, for fixed $c > 1 + 1/d$, we define $\VWA_d(\psi_c) := \bigcup_{c' > c} A(\psi_{c'})$, then the Hausdorff dimension of $\VWA_d(\psi_c)$ is $\frac{d+1}{c}$. Our next goal is to derive analogous results in the intrinsic setting. 

\begin{definition}
We say that $\xx \in M$ is \emph{intrinsically} $\psi$-\emph{approximable}, written $\xx \in A_M(\psi)$, if there exists a sequence $\rr_n \in \Q^d \cap M$ such that $\rr_n \to \xx$ and $\dist(\xx,\rr_n) \leq \psi(H(\rr_n))$.

Given $c > 0$, we say that $\xx \in M$ is \emph{intrinsically $\psi_c$-very well approximable}, written $\xx \in \VWA_M(\psi_c)$, if there exists $c' > c$ such that $\xx$ is $\psi_{c'}$-approximable, where $\psi_c$, $\psi_{c'}$ are as in \eqref{psi_c}. In other words, we set $$\VWA_M(\psi_c) := \bigcup_{c' > c} A_M(\psi_{c'}).$$ 
\end{definition}

Another consequence of the simplex lemma is that the Lebesgue measure $\lambda_M$ of $\VWA_M(\psi_{c(k,d)})$ is null; in fact this result holds for a much larger class of measures, namely the class of \emph{absolutely friendly} measures (see Definition \ref{definitionAF}). 

\begin{theorem}[Restated as Theorem \ref{theoremextremal}]
\label{theoremvwanullset}
Let $M \subset \R^d$ be a submanifold of dimension $k$. If $\lambda_M$-almost every point of $M$ is nondegenerate, then $\VWA_M(\psi_{c(k,d)})$ is a Lebesgue nullset.

More generally, let $\Psi:U\to M$ be a local parameterization of $M$, let $\mu$ be an absolutely friendly measure on $U$, and let $\nu = \Psi[\mu]$. If $\nu$-almost every point of $M$ is nondegenerate, then $\VWA_M(\psi_{c(k,d)})$ is a $\nu$-nullset.
\end{theorem}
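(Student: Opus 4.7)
The plan is a Borel--Cantelli argument built on two ingredients: the intrinsic Simplex Lemma (Lemma~\ref{lemmasimplex}) and the fact that, when $\Psi$ is nondegenerate, the pushforward measure $\nu=\Psi[\mu]$ is \emph{friendly} on $\R^d$ in the sense that there exist $C,\alpha>0$ with
\[
\nu\big(N_\delta(L)\cap B(\xx,r)\big)\le C(\delta/r)^\alpha\,\nu\big(B(\xx,r)\big)
\]
for every affine hyperplane $L\subset\R^d$, every $\xx\in M$ in the support of $\nu$, and all $0<\delta\le r\le 1$. This transference from absolute friendliness of $\mu$ on $U$ is standard under nondegeneracy, in the spirit of \cite{KLW}. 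By countable subadditivity we may restrict attention to a single chart $\Psi:U\to M$ on which $\Psi$ is uniformly nondegenerate and further to a compact $K\subset M$ containing the relevant support.

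Write $c:=c(k,d)$, fix $\epsilon>0$, and for $Q_n:=2^n$ set
\[
A_n := \bigcup_{\rr\in\Q^d\cap M,\ Q_n\le H(\rr)<2Q_n} B\big(\rr,\,Q_n^{-c-\epsilon}\big)\cap M,
\]
so that $\VWA_M(\psi_{c(k,d)})\cap K\subseteq\limsup_n A_n$, and by Borel--Cantelli it suffices to prove $\sum_n\nu(A_n)<\infty$. Cover $K$ by balls $B_i=B(\xx_i,\rho)$ of radius $\rho:=Q_n^{-c}$ with bounded overlap; doubling of $\nu$ yields $\sum_i\nu(B_i)\le C'\nu(K)$. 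In each $B_i$, Lemma~\ref{lemmasimplex} applied at scale $\rho$ (whose Simplex-Lemma height threshold is $\asymp\rho^{-1/c}\asymp Q_n$) places the rationals of $\Q^d\cap M\cap B_i$ of height at most $2Q_n$ on a single affine hyperplane $L_i\subset\R^d$, so $A_n\cap B_i\subseteq N_{Q_n^{-c-\epsilon}}(L_i)\cap B_i$. Friendliness of $\nu$ now gives
\[
\nu(A_n\cap B_i)\le C\big(Q_n^{-c-\epsilon}/Q_n^{-c}\big)^\alpha\nu(B_i)=C\,Q_n^{-\epsilon\alpha}\,\nu(B_i).
\]
Summing over $i$ and $n$, $\sum_n\nu(A_n)\le C''\nu(K)\sum_n 2^{-n\epsilon\alpha}<\infty$, and Borel--Cantelli finishes the argument.

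The main obstacle is the friendliness transfer itself: one must verify that the pushforward $\nu=\Psi[\mu]$ obeys a uniform hyperplane-slab bound on $\R^d$, not merely that $\mu$ does on the parameter space $U$. The delicate case arises when an affine hyperplane $L\subset\R^d$ is tangent to $M$, in which case the preimage $\Psi^{-1}(L\cap M)$ is not approximately a hyperplane in $U$. Nondegeneracy is exactly the input needed: it forces any linear functional on $\R^d$ pulled back via $\Psi$ to satisfy a $(C,\alpha)$-good inequality, and combined with absolute friendliness of $\mu$ this delivers the required bound on $\nu$. Once this transfer is in hand, the remainder of the argument is routine dyadic bookkeeping, and the exponent $c(k,d)$ enters only through the intrinsic Simplex Lemma, reflecting the same combinatorial constraints that shaped it in Theorem~\ref{theoremmain}.
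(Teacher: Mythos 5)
Your Borel--Cantelli skeleton matches the paper's, but the load-bearing step --- the claimed transfer that $\nu=\Psi[\mu]$ satisfies $\nu\big(L^{(\delta)}\cap B(\xx,r)\big)\le C(\delta/r)^\alpha\,\nu\big(B(\xx,r)\big)$ for \emph{every} ambient affine hyperplane $L\subset\R^d$ --- is false for any proper submanifold, and this is not a technicality that ``standard transference'' repairs. Take $L$ tangent to $M$ at a point $\xx$ of the support: then $M\cap B(\xx,r)\subset L^{(Cr^2)}$, so with $\delta\asymp r^2\le r$ the left-hand side is all of $\nu(B(\xx,r))$ while your right-hand side is $\asymp r^\alpha\nu(B(\xx,r))$. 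What \cite{KLW} actually proves for pushforwards of absolutely friendly measures under nondegenerate maps is the weaker ``decaying'' property, in which the denominator $r$ is replaced by $\sup_{\yy\in\Supp(\nu)\cap B}\dist(\yy,L)$; and with your scales $\delta=Q_n^{-c-\epsilon}$, $r=Q_n^{-c}$ even that is useless in the tangency case, because at a $D$-nondegenerate point the supremum can be as small as $\asymp r^D$ (osculating hyperplanes), making the relevant ratio $\delta/\sup\gtrsim r^{1+\epsilon/c-D}\gg 1$ for $D\ge 2$ and small $\epsilon$. So the hyperplane-slab bound, in the form you invoke it, cannot produce the factor $Q_n^{-\epsilon\alpha}$, and the series you sum need not converge; the hyperplanes $\LL_{n,i}$ produced by Lemma \ref{lemmasimplex} may well be (nearly) tangent to $M$.

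The paper's proof circumvents exactly this point, and the detour is the real content. Since a $\psi_{c+\epsilon}$-approximable point is within $Q_n^{-c-\epsilon}$ of a rational lying \emph{on} $M\cap\LL_{n,i}$, not merely near $\LL_{n,i}$, one can work in the parameter space and cover $\Psi^{-1}\big(\VWA_M(\psi_{c(k,d)})\big)$ by thickenings $\big(\Psi^{-1}(\LL_{n,i})\big)^{(\rho_n^{1+\gamma})}$ of the zero set $Z_f$ of the pulled-back affine functional $f=P\circ\emb$ (Claim \ref{claimSbound}); these are far smaller than the pullbacks of slabs around $\LL_{n,i}$ when the hyperplane is tangent. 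The measure of such a thickening is then controlled in Claim \ref{claimgammaalpha}: \cite[Proposition 7.3]{KLW} bounds $\mu\big(\{|f|<\varepsilon\sup_B|f|\}\cap B\big)$, and the needed containment $Z_f^{(\rho_n^{1+\gamma})}\cap B\subset\{|f|\lesssim_\times\rho_n^{\gamma/2}\sup_B|f|\}$ rests on the rescaled comparison $\|f\circ T_{\ss,\rho_n}\|_{\CC^D,B(\0,2)}\lesssim_\times\sup_{B(\0,1)}|f\circ T_{\ss,\rho_n}|$, proved via Lemmas \ref{lemmaapproximatelevelset1} and \ref{lemmaapproximatelevelset2} (near a tangency the Lipschitz constant of $f$ along $M$ is itself of the same order as $\sup_B|f|$, so the order of tangency cancels). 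Your closing sentence gestures at the $(C,\alpha)$-good inequality, but that inequality is relative to $\sup_B|f|$, not to the ball radius, and bridging that gap --- rather than any absolute-decay property of $\nu$ --- is what makes the exponent $\rho_n^{1+\gamma}$ work for arbitrary $D$. The remaining bookkeeping in your proposal (dyadic heights, applying Lemma \ref{lemmasimplex} at the doubled radius, restriction to a chart and to countably many $\epsilon=1/m$) is fine and parallels the paper.
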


This notation is motivated by the classical notion of $\VWA_d$. If we know a manifold $M \subset \R^d$ is maximally approximable as discussed above, then it makes sense to call $\VWA_M(\psi_{c(k,d)})$ the set of \emph{intrinsically very well approximable} points on $M$. Trivially this is satisfied for $M = \R^d$, as well as the other examples of maximally approximable submanifolds discussed below in Section \ref{sectionexamples}.}

\begin{remark}
Theorem \ref{theoremvwanullset} immediately implies a result about \emph{extrinsic} approximation, {i.e.}\ the approximation of points on a manifold $M$ by rational points in the complement of $M$, described by the Diophantine triple $(M,\Q^d\smallsetminus M,H)$. Namely, since the exponent $c(k,d)$ is strictly less than the exponent $1 + 1/d$ appearing in Dirichlet's theorem, Theorem \ref{theoremvwanullset} implies that for almost all $\xx\in M$, only finitely many of the approximants from Dirichlet's theorem can lie inside $M$, so $\xx$ is extrinsically $\psi_{1 + 1/d}$-approximable. For further discussion of extrinsic approximation see \cite{FishmanSimmons2}, where a result is proven for every point in $M$ (not just almost every point) which cannot be deduced from a corresponding ``intrinsic badly approximable'' result.
\end{remark}

\begin{question}
An interesting question is whether Theorem \ref{theoremvwanullset} can be strengthened by estimating the Hausdorff dimension of $\VWA_M(\psi_c)$ for a fixed $c > c(k,d)$. We do this for nonsingular quadric hypersurfaces in \cite{FKMS2} (see \cite{FMS} for a generalization), where the structure of quadric hypersurfaces is explicitly used. 
\end{question}

{\bf Acknowledgements.} The first-named author was supported in part by the Simons Foundation grant \#245708. The second-named author was supported in part by the NSF grant DMS-1101320. The fourth-named author was supported in part by the EPSRC Programme Grant EP/J018260/1. The authors would also like to thank Victor Beresnevich, Yann Bugeaud, and Sanju Velani for helpful discussions. The authors thank the anonymous referee for valuable comments.
\smallskip

{\bf Outline.} In Section \ref{sectionexamples}, we discuss some ways of constructing maximally approximable manifolds. In Section \ref{sectiongames}, we recall the definition of the hyperplane absolute game and introduce two variants, which turn out to be equivalent to the original game. In Section \ref{sectionsimplex}, we prove the main lemma of this paper, an intrinsic analogue of the Simplex Lemma, and use it to prove our main Theorems \ref{theoremmain}
and \ref{theoremvwanullset}.

\begin{convention}
Throughout the paper, the symbols $\lesssim_\times$, $\gtrsim_\times$, and $\asymp_\times$ will denote multiplicative asymptotics. For example, $A\lesssim_{\times,K} B$ means that there exists a constant $C > 0$ (the \emph{implied constant}), depending only on $K$, such that $A\leq C B$. In general, dependence of the implied constant on universal objects such as the manifold $M$ will be omitted from the notation.
\end{convention}

\begin{convention}
The symbol $\triangleleft$ will be used to indicate the end of a nested proof.
\end{convention}

\ignore{
\section{Brief Interlude on Projective Space}
\label{sectionprojective}

 We now consider the main setup of the paper, namely that of intrinsic approximation. One way to do it is to take $X = \R^d$, choose a $k$-dimensional submanifold $M$ of $\R^d$, let $\QQ = \Q^d\cap M$ and set $H(\frac{\pp}{q}) = q$ as in the introduction. However, we have chosen a different approach: state and prove the main results of the paper for submanifolds of projective spaces. This way in most cases statements of results and their proofs become more natural and transparent, see Remark \ref{remarkprojective} below. 

Let $\R^d$ denote the $d$-dimensional real projective space, and let $\pi:\R^d\butnot\{\0\}\to \R^d$ be the quotient map $\pi(\xx) := \xx$. For a subset $S$ of $\R^d$, we let $[S] = \pi(S\butnot\{\0\})$. With some abuse of notation, let us define the \emph{standard height function} $H:\Q^d\to\N$ by the formula \eq{projective height}{\begin{aligned}H([\pp]) = \|\pp\|, \text{ where $\pp$ is the unique (up to a sign)}\\ \text{ primitive integer representative of }[\pp].\qquad \end{aligned}}
Here and elsewhere $\|\cdot\|$ represents the max norm.

\begin{remark}
\label{remarkprojective}
To understand the difference between results for affine and projective spaces, note that if $\iota_d:\R^d\to\R^d$ is given by the formula $\iota_d(\xx) = [1:\xx]$ and if $B\subset\R^d$ is a bounded set, then $\iota_d\given B$ is bi-Lipschitz and
\begin{equation}
\label{Hstdcomparison}
H\big(\iota_d(\rr)\big) \asymp_{\times, B} H(\rr) \all \rr\in\Q^d\cap B.
\end{equation}

In particular, if $M$ is a nondegenerate submanifold of $\R^d$, then $\iota_d(M)$ is a nondegenerate\Footnote{Cf. Definition \ref{definitionnondegenerateprojective}.} submanifold of $\R^d$, and the Diophantine triples $T_{\aff} := (M,\Q^d\cap M,H)$ and $T_{\mathrm{proj}} := \big(\iota_d(M),\Q^d\cap\iota_d(M),H\big)$ are ``locally isomorphic''. However, both the bi-Lipschitz constant and the implied constant of \eqref{Hstdcomparison} depend on the chosen bounded set $B$. Thus concepts which are robust under point-dependent multiplicative constants will not be affected by the transformation. For example, whether or not a function is Dirichlet will be the same for the triples $T_{\aff}$ and $T_{\mathrm{proj}}$, but it is conceivable that a function could be uniformly Dirichlet for the triple $T_{\mathrm{proj}}$ but not for the triple $T_{\aff}$.(Eliminate use of triples).

One advantage of using the projective setup is that the projective statements are sometimes stronger than the affine corollaries, for example Dirichlet's proof of \eqref{equationdir} actually shows that 
\begin{equation}
\label{eqdirproj}
\psi_{1 + 1/d} \text{ is a uniform Dirichlet function for } \P^d_\Q \subset \P^d_\R.
\end{equation} 
This result is stronger than \eqref{equationdir} in that it implies \eqref{equationdir} but simply translating \eqref{equationdir} to projective space as above does not yield \eqref{eqdirproj}.
\end{remark}

 Additionally, the use of the projective setting makes the paper compatible with a recent study of intrinsic approximation on quadric hypersurfaces \cite{FKMS2}. 

In what follows we will work with submanifolds $M \subset \P^d_\R$, approximating points by rationals in $\QQ := \P^d_\Q \cap M$, with respect to the height function $H$. Given a nonincreasing function $\psi: \N \to (0,\infty)$, we say that $\xx \in M$ is $\psi$-\emph{approximable} if there exists a sequence of rationals $[\rr_n] \in \QQ$ satisfying $$[\rr_n] \to \xx
\quad \text{ and } \quad \dist(\xx, [\rr_n]) \leq \psi(H([\rr_n])).$$We denote the set of $\psi$-approximable points in $M$ by $A_M(\psi)$. We will say that $\xx \in M$ is $\psi$-\emph{badly approximable} if there exists a constant $\epsilon(\xx)$ such that for every $[\rr_n] \in \QQ$, $\dist(\xx, [\rr_n]) \geq \epsilon(\xx) \psi(H([\rr_n]))$. We denote the set of $\psi$-badly approximable points in $M$ by $\BA_M(\psi)$. Finally, we define $\psi_c(t) = \frac1{t^c}$. (Clean up these definitions)
}

\section{A discussion of maximal approximability in special cases}
\label{sectionexamples}


We start by giving a more detailed definition of a nondegenerate submanifold of $\R^d$.

\begin{definition}[Cf.\ {\cite[p.\ 341]{KleinbockMargulis2}}]
\label{definitionnondegenerate} 
Let $M\subset\R^d$ be a submanifold of dimension $k$. For each $\xx\in M$ and $j\in\Namer$, let
\[
T_{\xx}^{(j)}(M) := \bigcup_{i = 1}^j \{\gamma^{(i)}(0)\upharpoonleft \gamma:(-\epsilon,\epsilon)\rightarrow M,\;\gamma(0) = \xx\} \subset \R^d;
\]
equivalently, if $\emb:U\to M$ is a coordinate chart satisfying $\emb(\vv) = \xx$,
\begin{equation}
\label{TxjM2}
T_{\xx}^{(j)}(M) = \Span\big(\big\{\del^\alpha\emb(\vv) : \alpha\in\Neur^k , 0 < |\alpha| \leq j \big\}\big).
\end{equation}
Here $\Neur := \N\cup\{0\}$, and the power $ \del^\alpha$ is taken using multi-index notation: if $\alpha = (\alpha_1,\dots,\alpha_k)$, then $\del^\alpha = \del_1^{\alpha_1}\cdots \del_k^{\alpha_k}$. We will call $T_{\xx}^{(j)}(M)$ the \emph{tangent space of order $j$ of $M$ at $\xx$}.

A point $\xx\in M$ is said to be \emph{$D$-nondegenerate} for $M$ if $T_{\xx}^{(D)}(M) = \R^d$, and \emph{nondegenerate} if it is nondegenerate for some $D\in\Namer$. Finally, $M$ is \emph{nondegenerate} if some point of $M$ is nondegenerate for $M$.
\end{definition}

\begin{observation} \label{observationdegeneracy}
~
\begin{itemize}
\item[(i)] If $M$ is contained in an affine hyperplane of $\R^d$, then $M$ is degenerate at every point. 
\item[(ii)] If $M$ is real-analytic and connected, and $M$ is not contained in an affine hyperplane, then every point of $M$ is nondegenerate.
\item[(iii)] For each $D\in\Namer$, the set of $D$-nondegenerate points of $M$ is relatively open in $M$.
\end{itemize}
\end{observation}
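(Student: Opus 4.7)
The plan is to prove the three parts in the order (i), (iii), (ii), since (i) and (iii) are essentially direct from the definition while (ii) will require a real-analyticity argument.

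For (i), suppose $M \subset H := \{\yy\in\R^d : \langle \yy,\nn\rangle = c\}$ for some nonzero $\nn$ and some $c$. For any smooth curve $\gamma:(-\epsilon,\epsilon)\to M$ with $\gamma(0) = \xx$, the scalar function $t\mapsto \langle \gamma(t),\nn\rangle$ is identically $c$, so differentiating $i$ times yields $\langle \gamma^{(i)}(0),\nn\rangle = 0$ for every $i\geq 1$. Hence $T_\xx^{(j)}(M)\subseteq \nn^\perp \subsetneq \R^d$ for every $j$, so $\xx$ is degenerate.

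For (iii), fix a coordinate chart $\emb:U\to M$ with $\emb(\vv) = \xx$ and use the characterization in \eqref{TxjM2}. Enumerate the multi-indices $0 < |\alpha|\leq D$ as $\alpha_1,\dots,\alpha_N$, and consider the $d\times N$ matrix $A(\vv)$ whose columns are $\del^{\alpha_i}\emb(\vv)$. The condition $T_\xx^{(D)}(M) = \R^d$ is equivalent to $\operatorname{rank} A(\vv) = d$, i.e.\ to the nonvanishing of at least one of the $d\times d$ minors of $A(\vv)$. Since these minors are continuous functions of $\vv$, the set of $D$-nondegenerate points is open in $M$.

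For (ii), I will argue the contrapositive: if some $\xx_0\in M$ is degenerate, then $M$ is contained in an affine hyperplane. Choose a coordinate chart $\emb:U\to M$ with $U$ connected, $\emb(\vv_0) = \xx_0$, and real-analytic. Since $\xx_0$ is degenerate, the union $T^{(\infty)}_{\xx_0}(M) = \bigcup_j T^{(j)}_{\xx_0}(M)$, being a nested union of subspaces stabilizing at some step (by finite-dimensionality), is a proper subspace of $\R^d$; pick a nonzero $\nn$ orthogonal to it. Then \eqref{TxjM2} forces $\langle \del^\alpha\emb(\vv_0),\nn\rangle = 0$ for all multi-indices $\alpha$ with $|\alpha|\geq 1$. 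Therefore the real-analytic function $g(\vv) := \langle \emb(\vv)-\xx_0,\nn\rangle$ has vanishing Taylor series at $\vv_0$ and hence vanishes identically on the connected set $U$, so $\emb(U)$ lies in the affine hyperplane $H_\nn := \{\yy : \langle \yy-\xx_0,\nn\rangle = 0\}$.

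To globalize, consider the function $f_\nn:M\to\R$ defined by $f_\nn(\yy) := \langle \yy-\xx_0,\nn\rangle$, which is real-analytic on the connected real-analytic manifold $M$. The previous step shows $f_\nn$ vanishes on the open set $\emb(U)$. By the identity theorem for connected real-analytic manifolds, $f_\nn\equiv 0$ on $M$, so $M\subseteq H_\nn$, as required. The main subtlety is the passage from the local vanishing near $\xx_0$ to the global vanishing on $M$, which is precisely where the hypotheses of connectedness and real-analyticity are used; without them one could only conclude that some neighborhood of $\xx_0$ lies in an affine hyperplane, and different points of $M$ might a priori be controlled by different hyperplanes.
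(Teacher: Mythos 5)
Your proof is correct and follows essentially the same route as the paper: (i) via $T_\xx^{(j)}(M)\subseteq\nn^\perp$ for a hyperplane normal $\nn$, (iii) via continuity (lower semicontinuity of rank) of the minors of the matrix of partials in \eqref{TxjM2}, and (ii) by the contrapositive showing $M\subset\xx_0+\bigcup_D T^{(D)}_{\xx_0}(M)$. Your argument for (ii) simply spells out, via vanishing Taylor coefficients and the identity theorem on a connected real-analytic manifold, the one-line inclusion the paper asserts without detail.
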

\begin{proof}
~
\begin{itemize}
\item[(i)] If $M\subset \LL + \vv$ where $\LL$ is a linear hyperplane and $\vv\in\R^d$, then $T_{\xx}^{(D)}(M)\subset \LL$ for all $\xx\in M$ and $D\in\Namer$.
\item[(ii)] If $M$ is real-analytic and degenerate at $\xx\in M$, then $M\subset \xx + \bigcup_{D\in\Namer}T_{\xx}^{(D)}(M)$.
\item[(iii)] This follows from \eqref{TxjM2} together with the lower semicontinuity of the function sending a matrix to its rank.
\qedhere\end{itemize}
\end{proof}
Connected manifolds which are degenerate at every point but are not contained in a hyperplane exist but are very pathological; we refer to \cite{Wolsson} for a detailed account, stated in somewhat different language.

\smallskip

\ignore{The next definition is, in some sense, the heart of the paper. \commkeith{We should reword this then} We introduce it here for the sake of exposition, but its motivation will come later, specifically Claim \ref{claimNkd}. There we see that the terms $N_{k,d}$ and $c(k,d)$ arise naturally in a volume computation used to prove the intrinsic Simplex Lemma.}
The next example describes an important family of nondegenerate submanifolds of $\R^d$:

\begin{example}[Veronese variety]
\label{exampleveronese}
Fix $k,n\in\Namer$, let $d = [k,n] - 1$ (see Notation \ref{notationkd}), and consider the \emph{Veronese embedding} $\Psi_{k,n}:\R^k\rightarrow\R^d$ defined by
\[
\Psi_{k,n}(\tt) = (\tt^\alpha)_{\substack{\alpha\in\Neur^k \\ 0 < |\alpha| \leq n}},
\]
where the power $\tt^\alpha$ is taken using multi-index notation. Then it can be straightforwardly verified that the \emph{Veronese variety} $V_{k,n} = \Psi_{k,n}(\R^k)$ is a nondegenerate\Footnote{Even stronger, every point of $V_{k,n}$ is $n$-nondegenerate. Moreover, by \eqref{TxjM2}, $\HD\big(T_{\xx}^{(n)}(M)\big) \leq [k,n]$ for any manifold $M$. Thus the ambient dimension of $V_{k,n}$ is maximal among all $n$-nondegenerate $k$-dimensional manifolds.} submanifold of $\R^d$.
\end{example}
The one-dimensional special case ($k = 1$, $d = n$) is usually called \emph{Veronese curve} or \emph{rational normal curve}. Letting $k = 1$ and $n = 2$ yields $V_{1,2} = C_2$ (cf.\ Example \ref{exampletrivial}). Recall that the latter curve was our first example of a maximally approximable manifold (see Definition \ref{definitionmaximallyapproximable}). 


The following lemma shows that the map $\Psi_{k,n}$ is an ``isomorphism'' between the Diophantine triples $(\R^k,\Q^k,H^n)$ and $(V_{k,n},V_{k,n}\cap\Q^d,H)$:

\begin{lemma}
\label{lemmaveronese}
The map $\Psi_{k,n}$ is a diffeomorphism between $\R^k$ and $V_{k,n}$; moreover
$$
V_{k,n}\cap \Q^{[k,n] - 1} 
= \Psi_{k,n}(\Q^k)
\quad\text{and}\quad
H\big(\Psi_{k,n}(\rr)\big) 
= H^n(\rr) \all \rr\in\Q^k.\footnotemark
$$
\end{lemma}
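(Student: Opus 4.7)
The plan is to verify the three assertions of the lemma one at a time, with the main work being the height identity.

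For the diffeomorphism claim, I would note that among the coordinates of $\Psi_{k,n}(\tt) = (\tt^\alpha)_{0 < |\alpha| \leq n}$, the $k$ coordinates corresponding to the multi-indices $\alpha$ of weight $|\alpha| = 1$ are precisely $t_1,\dots,t_k$. Thus the projection $\pi\colon\R^{[k,n]-1}\to\R^k$ onto these coordinates is a smooth left inverse of $\Psi_{k,n}$, so $\Psi_{k,n}$ is a smooth injection with smooth inverse $\pi\given V_{k,n}$, hence a diffeomorphism onto $V_{k,n}$.

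For the identity $V_{k,n}\cap \Q^{[k,n]-1} = \Psi_{k,n}(\Q^k)$, the inclusion $\supseteq$ is immediate since monomials in rationals are rational. The reverse inclusion follows from the previous paragraph: if $\Psi_{k,n}(\tt)\in \Q^{[k,n]-1}$, then projecting onto the $|\alpha|=1$ coordinates gives $\tt\in\Q^k$.

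The heart of the proof is the height identity. Fix $\rr = \pp/q\in\Q^k$ in reduced form, so that $q\in\N$, $\pp\in\Z^k$, and $\gcd(p_1,\dots,p_k,q)=1$. The coordinates of $\Psi_{k,n}(\rr)$ are $\pp^\alpha/q^{|\alpha|}$, so $q^n\Psi_{k,n}(\rr) = (q^{n-|\alpha|}\pp^\alpha)_\alpha\in\Z^{[k,n]-1}$, which shows $H(\Psi_{k,n}(\rr))\mid q^n$. It remains to show the integer vector $(q^{n-|\alpha|}\pp^\alpha)_\alpha$ is primitive with respect to $q^n$, i.e.\ that no prime dividing $q$ divides every entry. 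Given a prime $\ell\mid q$, the coprimality assumption gives some index $i$ with $\ell\nmid p_i$; the coordinate indexed by $\alpha = n\mathbf{e}_i$ is then $q^{0}p_i^n = p_i^n$, which is coprime to $\ell$. Hence $\gcd\bigl((q^{n-|\alpha|}\pp^\alpha)_\alpha,\,q^n\bigr) = 1$, and therefore $H(\Psi_{k,n}(\rr)) = q^n = H(\rr)^n$.

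The only potentially subtle step is the primitivity argument in the last paragraph; the key observation that isolates the difficulty is that the pure-power coordinates $p_i^n$ (for $i=1,\dots,k$) are already present among the Veronese coordinates without any compensating factor of $q$, so the joint primitivity of $(p_1,\dots,p_k,q)$ forces joint primitivity of the numerator vector with $q^n$.
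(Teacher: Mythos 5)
Your proof is correct, and it is precisely the ``straightforward computation'' that the paper leaves to the reader: the projection onto the $|\alpha|=1$ coordinates gives the smooth inverse and the rationality statement, and the primitivity of $(q^{n-|\alpha|}\pp^\alpha)_\alpha$ relative to $q^n$ via the pure-power coordinates $p_i^n$ yields $H\big(\Psi_{k,n}(\pp/q)\big)=q^n$. Nothing further is needed.
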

\Footnotetext{The map $\Psi_{k,n}$ is not the only embedding which is an isomorphism in this sense; more generally, if $\Psi:\R\to \R^d$ is an embedding defined by polynomials with integer coefficients, then a relation between $H\circ\Psi$ and $H$ was found in \cite[Proof of Lemma 2]{BDL}. Similarly to Corollary \ref{corollaryveronese}, this relation can be used to discover an optimal Dirichlet function on the corresponding curve. However, in most cases the resulting curve is not maximally approximable.}

The proof is a straightforward computation which is left to the reader.

\begin{corollary}
\label{corollaryveronese}
For any $k,n\in\Namer$, $V_{k,n}$ is a maximally approximable submanifold of $\R^d$.
\end{corollary}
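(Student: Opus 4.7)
The plan is to combine classical Dirichlet approximation on $\R^k$ with the isomorphism properties of the Veronese embedding provided by Lemma \ref{lemmaveronese}, and then verify that the resulting exponent matches $c(k,d)$ via a direct combinatorial computation.

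First, given $\xx \in V_{k,n}$, I would set $\tt := \Psi_{k,n}^{-1}(\xx) \in \R^k$ and apply Dirichlet's theorem \equ{equationdir} to produce a sequence $\qq_j \in \Q^k$ with $\qq_j \to \tt$ and $\|\tt - \qq_j\| \leq C_\tt H(\qq_j)^{-(k+1)/k}$. Next, push this sequence forward: let $\rr_j := \Psi_{k,n}(\qq_j)$. By Lemma \ref{lemmaveronese}, $\rr_j \in V_{k,n} \cap \Q^d$ (so these are genuine intrinsic rationals) and $H(\rr_j) = H(\qq_j)^n$. Since $\Psi_{k,n}$ is smooth and $\qq_j$ is eventually contained in a bounded neighborhood of $\tt$, we have $\|\xx - \rr_j\| \lesssim_{\times,\xx} \|\tt - \qq_j\|$ for all sufficiently large $j$. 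Combining these bounds and substituting $H(\qq_j) = H(\rr_j)^{1/n}$ gives
\[
\|\xx - \rr_j\| \lesssim_{\times,\xx} H(\rr_j)^{-(k+1)/(kn)},
\]
so $\psi_{(k+1)/(kn)}$ is a Dirichlet function for intrinsic approximation on $V_{k,n}$.

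It then remains to verify the combinatorial identity $c(k,d) = (k+1)/(kn)$ when $d = [k,n]-1$. The hockey stick identity gives $\sum_{j=1}^{n} [k-1,j] = \sum_{j=1}^n \binom{k-1+j}{j} = \binom{k+n}{n} - 1 = [k,n]-1 = d$, and the next partial sum already exceeds $d$, so the decomposition \eqref{mnkddef} is realized with $n_{k,d} = n$ and $m_{k,d} = 0$. For $N_{k,d}$, the identity $j\binom{k-1+j}{j} = k\binom{k-1+j}{j-1}$ together with a second application of the hockey stick identity yields
\[
N_{k,d} = \sum_{j=1}^n j[k-1,j] = k\sum_{i=0}^{n-1}\binom{k+i}{i} = k\binom{k+n}{n-1}.
\]
A short manipulation then shows $c(k,d) = (d+1)/N_{k,d} = \binom{k+n}{n}/[k\binom{k+n}{n-1}] = (k+1)/(kn)$, as required.

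The only non-routine step is the combinatorial identity for $c(k,d)$; the Diophantine content of the corollary is essentially transport of Dirichlet's theorem through the map $\Psi_{k,n}$, with the height distortion $H \circ \Psi_{k,n} = H^n$ of Lemma \ref{lemmaveronese} accounting exactly for the reciprocal factor of $n$ in the resulting exponent.
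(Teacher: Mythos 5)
Your proof is correct and follows essentially the same route as the paper: transport Dirichlet's theorem on $\R^k$ through $\Psi_{k,n}$ using the height relation of Lemma \ref{lemmaveronese}, then verify $c\big(k,[k,n]-1\big)=(k+1)/(kn)$ via the same combinatorial computation ($n_{k,d}=n$, $m_{k,d}=0$, $N_{k,d}=k[k+1,n-1]=k\binom{k+n}{n-1}$). The only difference is packaging: the paper factors the transport step through the more general Lemma \ref{lemmaquadraticveronese} (applied with $M=\R^k$, i.e.\ $k=d$), whereas you carry it out directly.
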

\begin{proof}
We begin by proving the following more general assertion:
\begin{lemma}
\label{lemmaquadraticveronese}
Fix $d,n\in\Namer$, and let $M$ be a maximally approximable submanifold of $\R^d$ of dimension $k$. Suppose that $\Psi_{d,n}(M)$ is a nondegenerate submanifold of $\R^{[d,n] - 1}$.\Footnote{This follows, for example, if $M$ is connected, real-analytic, and Zariski dense in $\R^d$.} Then $\Psi_{d,n}(M)$ is maximally approximable if and only if
\begin{equation}
\label{quadraticveronese}
\frac1n \frac{d + 1}{N_{k,d}} = \frac{[d,n]}{N_{k,[d,n] - 1}}\cdot
\end{equation}
\end{lemma}
\begin{subproof}
Since $M$ is maximally approximable, $\psi_{c(k,d)}$ is an optimal Dirichlet function for the Diophantine triple $(M,M\cap\Q^d,H)$. Now fix $\Psi_{d,n}(\xx)\in \Psi_{d,n}(M)$. Then there exists a sequence $M\cap \Q^d\ni\rr_m\to\xx$ with $\|\rr_m - \xx\| \lesssim_\times \psi_{c(k,d)}\circ H(\rr_m)$, so by Lemma \ref{lemmaveronese}, $\Psi_{d,n}(M)\cap \Q^{[d,n] - 1} \ni \Psi_{d,n}(\rr_m) \to \Psi_{d,n}(\xx)$ and $\|\Psi_{d,n}(\rr_m) - \Psi_{d,n}(\xx)\| \lesssim_\times \psi_{c(k,d)}\circ H(\rr_m) = \psi_{c(k,d)/n)}\circ H(\Psi_{d,n}(\rr_m))$. Since $\xx$ was arbitrary, $\psi_{c(k,d)/n}$ is a Dirichlet function for the Diophantine triple
\eq{triple}{\big(\Psi_{d,n}(M),\Psi_{d,n}(M)\cap\Q^{[d,n] - 1},H\big).}
A siimlar argument shows that $\psi_{c(k,d)/n}$ is optimal for \equ{triple}.

On the other hand, $\Psi_{d,n}(M)$ is maximally approximable if and only if $\psi_{c(k,[d,n] - 1)}$ is an optimal Dirichlet function for \equ{triple}. Since $\psi_{c_1}$ and $\psi_{c_2}$ cannot be optimal Dirichlet functions for the same Diophantine triple unless $c_1 = c_2$, the lemma follows.
\end{subproof}

Since $\R^k$ is a maximally approximable submanifold of itself, to complete the proof it suffices to show that \eqref{quadraticveronese} holds when $k = d$. Indeed,\Footnote{In establishing these formulas, the identity $[a,b] = [a - 1,b] + [a,b - 1]$ is useful.}
\begin{align*}
n_{k,k} &= 1
&
m_{k,k} &= 0
&
N_{k,k} &= k
\\
n_{k,[k,n] - 1} &= n
&
m_{k,[k,n] - 1} &= 0
&
N_{k,[k,n] - 1} &= k[k + 1,n - 1] = \frac{kn}{k + 1} [k,n],
\end{align*}
which implies the desired result.
\end{proof}

It will be observed that Corollary \ref{corollaryveronese} is simply the end result of transferring Dirichlet's theorem in $\R^k$ into $V_{k,n}$ via the map $\Psi_{k,n}$. Thus, intrinsic Diophantine approximation on $V_{k,n}$ is essentially the same as Diophantine approximation on $\R^k$, and does not introduce any new phenomena. 

By contrast, new phenomena appear when we study intrinsic approximation on nonsingular quadric hypersurfaces, demonstrating that this theory cannot be reduced to Diophantine approximation on $\R^k$ in the same way. In \cite{FKMS2} we establish a complete theory of intrinsic approximation on quadric hypersurfaces, in particular 
{showing} that $\psi_1$ is a Dirichlet function for every quadric hypersurface \cite[Theorem 5.1]{FKMS2}, regardless of the dimension $k$. Since 
$c(k,k+1) = 1$ for every $k$, this shows that quadric hypersurfaces are maximally approximable.

We end this section with a discussion of the following question: For what pairs $(k,d)$ ($1 \leq k \leq d$) can we prove that there exists a maximally approximable submanifold of $\R^d$ of dimension $k$? For convenience let
\[
\MM = \{(k,d) : \text{ there exists a maximally approximable submanifold of $\R^d$ of dimension $k$}\}.
\]
Trivially $(k,k)\in \MM$ for all $k\in\Namer$. Moreover, since every nonsingular rational quadric hypersurface is maximally approximable, we have $(k,k + 1)\in \MM$ for all $k\in\Namer$. On the other hand, by Corollary \ref{corollaryveronese}, we have $(k,[k,n] - 1)\in\MM$ for all $k,n\in\Namer$. Taking the special case $k = 1$, we have $(1,d)\in\MM$ for all $d\in\Namer$. Thus in every dimension, there exist both a maximally approximable curve and a maximally approximable hypersurface.

It is theoretically possible to get more pairs in $\MM$ by using Lemma \ref{lemmaquadraticveronese}. Namely, if $(k,d)\in\MM$ and if \eqref{quadraticveronese} holds for some $n\in\N$, then $(k,[d,n] - 1)\in\MM$. However, we do not have any examples of pairs $(k,d)$ which we can prove to be in $\MM$ this way but which were not proven to be in $\MM$ in the above paragraph.

Although the list of pairs known to be in $\MM$ is so far quite meager, the elegance of the calculation which produces the number $c(k,d)$ (cf.\ Lemma \ref{lemmasimplex} and its proof) leads the authors to believe that there could be many more examples. It is even conceivable that all dimension pairs are in $\MM$. 


The smallest pair $(k,d)$ for which we do not know the answer to this question is the pair $(2,4)$, which satisfies $c(2,4) = 5/6$.

\section{The hyperplane game and two variants}
\label{sectiongames}

In \cite{Schmidt1}, W.\ M.\ Schmidt introduced the game which is now known as Schmidt's game. A variant of this game was defined by C.\ T.\ McMullen \cite{McMullen_absolute_winning}, and in turn a variant of McMullen's game was defined in \cite{BFKRW}. For the purposes of this paper, we will be interested only in this last variant, called the \emph{hyperplane absolute game},\Footnote{In what follows we abbreviate ``hyperplane absolute game'' to just ``hyperplane game''.} and not in Schmidt's game or McMullen's game. However, we note that every hyperplane winning set is winning for Schmidt's game \cite[Proposition 2.3(a)]{BFKRW}. Some recent papers in which the hyperplane game has appeared are \cite{AGK, KleinbockLy, NesharimSimmons}.

Given $\beta > 0$ and $k\in\N$, the $\beta$-hyperplane game is played on $\R^k$ by two players Alice and Bob as follows:
\begin{itemize}
\item[1.] Bob chooses an initial ball $B_0 = B(\xx_0,\rho_0) \subset \R^k$. (In this paper all balls are closed.)
\item[2.] After Bob's $n$th move $B_n$, Alice chooses an affine hyperplane $A_n\subset\R^k$. We say that Alice ``deletes the neighborhood of $A_n$''.
\item[3.] After Alice's $n$th move $A_n$, Bob chooses a ball $B_{n + 1} = B(\xx_{n + 1},\rho_{n + 1})$ satisfying
\[
B_{n + 1}\subset B_n\butnot A_n^{(\beta\rho_n)} \text{ and }\rho_{n + 1}\geq \beta\rho_n.
\]
Here and elsewhere $S^{(\epsilon)}$ denotes the closed $\epsilon$-thickening of a set $S$. If he is unable to choose such a ball, he loses.\Footnote{This disagrees with the convention introduced in \cite{BFKRW}; however, if we restrict to $0 < \beta \leq 1/3$ (as is done in \cite{BFKRW}) then Bob is always able to make a legal move, so the question is irrelevant. We use the convention that Bob loses in order to avoid technicalities (cf.\ \cite[p.4]{BFS1}) in the variants of the hyperplane game discussed below, where it is not always obvious whether or not Bob has legal moves.}
\end{itemize}
A set $S\subset\R^k$ is said to be \emph{$\beta$-hyperplane winning} if Alice has a strategy which guarantees that
\[
\bigcap_{n = 1}^\infty B_n \cap S \neq\emptyset.
\]
$S$ is \emph{hyperplane winning} if it is $\beta$-hyperplane winning for all $\beta > 0$.

\begin{remark}\label{remarksingleton}
By modifying slightly the proof of \cite[Proposition 4.4]{FSU4}, one can show that if Bob's balls are required to satisfy $\rho_{n + 1} = \beta\rho_n$ rather than $\rho_{n + 1} \geq \beta\rho_n$, then the class of sets which are hyperplane winning remains unchanged. Thus we can assume that $\rho_n\to 0$, in which case the intersection $\bigcap_1^\infty B_n$ is a singleton.
\end{remark}

We list here three important results regarding hyperplane winning sets, the proofs of which can be found in \cite[Proposition 2.3(b,c), Lemma 4.1, and Proposition 4.7]{BFKRW}:

\begin{proposition}
\label{propositionwinningproperties}
~
\item[(i)] The countable intersection of hyperplane winning sets is hyperplane winning.
\item[(ii)] The image of a hyperplane winning set under a $\CC^1$ diffeomorphism of $\R^k$ is hyperplane winning.
\item[(iii)] The intersection of any hyperplane winning set with any open set has full Hausdorff dimension.
\end{proposition}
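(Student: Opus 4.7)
All three claims are standard game-theoretic facts that go back to the original Schmidt-game playbook, adapted to the hyperplane setting; since they are cited rather than proved in this paper, I will outline the strategies I would use for each.

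For (i), I would use strategy time-sharing. Suppose $S_i$ is $\beta_i$-hyperplane winning with winning strategy $\sigma_i$, and fix a target $\beta > 0$. Choose positive integers $n_i$ with $\beta^{n_i} \le \beta_i$ and partition the rounds into blocks, dedicating a subsequence of density $\asymp 1/n_i$ to $\sigma_i$. On a round assigned to $\sigma_i$, Alice consults $\sigma_i$ (applied to the subsequence of Bob-balls from those rounds) and plays the hyperplane it prescribes; on every other round she plays a hyperplane far from $B_n$, so that her move does not constrain Bob. Each subsequence is a valid $\beta^{n_i}$-hyperplane game played under $\sigma_i$, and its unique limit point coincides with $\bigcap_n B_n$. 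Hence that limit point lies in every $S_i$.

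For (ii), let $f : \R^k \to \R^k$ be a $\CC^1$ diffeomorphism and $S$ a hyperplane winning set. I would fix $\beta > 0$ together with a $\beta' \in (0,\beta)$ and build a $\beta$-winning strategy for $f(S)$ out of a $\beta'$-winning strategy $\sigma$ for $S$. Given Bob's ball $B_n = B(\yy_n, \rho_n)$ in the codomain, Alice pulls back to a ball $B_n' \subset f^{-1}(B_n)$ of radius comparable to $\rho_n / \|Df(f^{-1}(\yy_n))\|$, consults $\sigma$ to obtain an affine hyperplane $A_n'$ in the domain, and plays the unique affine hyperplane $A_n$ through $f(\xx_n)$ tangent to $f(A_n')$ at that point. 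Because $f$ is $\CC^1$, the Hausdorff deviation between $A_n$ and $f(A_n')$ over $B_n$ is $o(\rho_n)$, so for sufficiently small $\rho_n$ every legal Bob response in the codomain game pulls back to a legal response in the domain. The main obstacle is controlling this nonlinearity at every scale; this is resolved by letting Alice play harmless far-away hyperplanes for a bounded initial segment of rounds until $\rho_n$ is small enough that the second-order correction is dominated by the gap $\beta - \beta'$. The limit point then equals $f(\xx_\infty)$ with $\xx_\infty \in S$.

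For (iii), I would execute the standard Cantor-tree construction. Fix $S$ hyperplane winning, an open set $U$, and a small parameter $\beta \in (0, 1/3)$; let $\sigma$ be a $\beta$-winning strategy for $S$ and pick $B_0 \subset U$. By Remark \ref{remarksingleton} one may assume $\rho_{n+1} = \beta \rho_n$. A direct packing computation in $\R^k$ shows that inside $B_n \setminus A_n^{(\beta \rho_n)}$ one can fit
\[
N(\beta) \ \ge\ (1 - C_k \beta)\, c_k\, \beta^{-k}
\]
pairwise disjoint closed balls of radius $\beta \rho_n$, since the slab deleted by Alice occupies only a fraction $O(\beta)$ of the volume of $B_n$. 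For every sequence $(b_n)$ with $b_n \in \{1,\dots,N(\beta)\}$, having Bob play the $b_n$th ball at stage $n$ while Alice responds via $\sigma$ produces a unique limit point in $S$, and distinct sequences give distinct limit points by disjointness at some level. This embeds a full $N(\beta)$-ary self-similar Cantor set with contraction ratio $\beta$ into $S \cap U$, and the mass distribution principle yields
\[
\dim(S \cap U) \ \ge\ \frac{\log N(\beta)}{\log(1/\beta)} \ \xrightarrow[\beta \to 0]{}\ k.
\]
Of the three parts, (ii) is the most technically delicate because of the nonlinearity of $f$, while (iii) is where the codimension-one nature of the hyperplanes that Alice deletes is essential: if she could delete neighborhoods of higher-codimension objects of larger ambient dimension, the packing count $N(\beta)$ would be smaller and the resulting dimension correspondingly reduced.
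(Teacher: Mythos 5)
The paper itself offers no proof of this proposition: it quotes the three facts from \cite{BFKRW} (Proposition 2.3(b,c), Lemma 4.1 and Proposition 4.7 there), so any argument is necessarily ``different'' from the paper's. Your sketches follow the standard routes — turn-sharing for countable intersections, local linearization for $\CC^1$ images, and a Cantor-tree plus mass-distribution bound for the dimension — and parts (ii) and (iii) are fine as outlines, modulo routine details you gloss over (in (ii) one must also arrange that the balls Alice inscribes in the pullbacks $f^{-1}(B_n)$ are nested, e.g.\ by consulting the domain strategy only along a subsequence of rounds on which the radius has dropped by a definite factor; compactness of $B_0$ is what gives the uniform control on $Df$).

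In (i), however, your parameter bookkeeping runs the wrong way. In the hyperplane game a \emph{smaller} parameter is harder for Alice: she deletes thinner neighborhoods and Bob may shrink faster. So knowing that $S_i$ is $\beta_i$-winning gives no strategy for the $\beta^{n_i}$-game when $\beta^{n_i}\le\beta_i$, while (as you yourself note) the subsequence of Bob's balls on the rounds devoted to $S_i$ is only guaranteed to be a legal play of the $\beta^{n_i}$-game, since the radius may contract by a factor as small as $\beta^{n_i}$ between consecutive consulted rounds. The fix is immediate and uses exactly the hypothesis that each $S_i$ is hyperplane winning, i.e.\ winning for \emph{every} parameter: take $\sigma_i$ to be a winning strategy for the parameter $\beta^{n_i}$ itself. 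The simulation is then legitimate, because on a consulted round Alice actually deletes the $\beta\rho_n$-neighborhood of the hyperplane prescribed by $\sigma_i$, which contains the $\beta^{n_i}\rho_n$-neighborhood that $\sigma_i$ expects, so Bob's real responses are legal responses in the simulated game, and the common point of $\bigcap_n B_n$ (a singleton by Remark \ref{remarksingleton}) lies in every $S_i$. Finally, the closing aside in (iii) is garbled: neighborhoods of \emph{higher}-codimension (hence lower-dimensional) sets would remove less and make the packing count larger, not smaller; what your dimension count actually uses is only that the deleted set is a single slab of thickness comparable to $\beta\rho_n$, which meets an $O(\beta)$ proportion of the candidate subballs.
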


In fact, in (iii) more is true: the intersection of a hyperplane winning set with a sufficiently nondegenerate fractal (a \emph{hyperplane diffuse} set) is winning for Schmidt's game on that fractal \cite[Propositions 4.7 and 4.9]{BFKRW} and therefore has Hausdorff dimension equal to at least the lower pointwise dimension of any measure whose support is equal to that fractal \cite[Proposition 5.1]{KleinbockWeiss2}. In particular, if the fractal is Ahlfors regular then the intersection has full dimension relative to the fractal.

In \cite[\63]{KleinbockWeiss4}, the notion of hyperplane winning was generalized from subsets of Euclidean space to subsets of arbitrary manifolds. Namely, a subset $S$ of a manifold $M$ is \emph{hyperplane winning relative to $M$} if whenever $\Psi:U\to M$ is a local parameterization of $M$ and $K\subset U$ is compact, the set $\Psi^{-1}(S)\cup(\R^k\butnot K)$ is hyperplane winning.\Footnote{In \cite{KleinbockWeiss4} the definition is given in a slightly different way, depending on the notion of hyperplane winning subsets of an open set. However, it is easily verified that the two definitions are equivalent.}

We now state our main result concerning the abundance of badly intrinsically approximable points:

\begin{theorem}[Restated as Theorem \ref{theoremBAwinning}]
\label{theoremBAwinningother}
Let $M\subset\R^d$ be a submanifold of dimension $k$, and let $c(k,d)$ be as in Notation \ref{notationkd}. Suppose that for some $D\in\N$, every point of $M$ is $D$-nondegenerate. Then $\BA_M(\psi_{c(k,d)})$ is hyperplane winning relative to $M$.
\end{theorem}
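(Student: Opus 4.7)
The plan is to apply the intrinsic Simplex Lemma (Lemma \ref{lemmasimplex}) inside a local parameterization of $M$ and translate its conclusion into a legal move for Alice in a suitable variant of the hyperplane game. Let $\Psi:U\to M$ be a local parameterization with $U\subset\R^k$ open and $K\subset U$ compact. By the definition of ``hyperplane winning relative to $M$'' it suffices to show that $\Psi^{-1}\big(\BA_M(\psi_{c(k,d)})\big)\cup(\R^k\smallsetminus K)$ is hyperplane winning in $\R^k$. The hypothesis of uniform $D$-nondegeneracy together with compactness of $K$ makes the Simplex Lemma applicable with constants uniform on $\Psi(K)$, and the bi-Lipschitzness of $\Psi$ on $K$ lets me freely convert distances between the parameter space and $M$.

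Fix $\beta>0$. By Remark \ref{remarksingleton} I may assume Bob's balls satisfy $\rho_n=\beta^n\rho_0$, so the outcome is a single point. Choose a small $\alpha>0$ and define thresholds $T_n$ by $\alpha\,\psi_{c(k,d)}(T_n)=\rho_n$, so $T_n=(\alpha/\rho_n)^{1/c(k,d)}$. After Bob's $n$th move $B_n=B(\xx_n,\rho_n)$, collect the ``dangerous'' rationals at stage $n$:
\[
\mathcal R_n := \big\{\rr\in\Q^d\cap M : \Psi^{-1}(\rr)\in B_n,\ T_{n-1}<H(\rr)\le T_n\big\}.
\]
Applied inside the ball $\Psi(B_n)\subset M$ of radius $\asymp\rho_n$, the intrinsic Simplex Lemma produces, for $\alpha$ small enough depending only on $M$, $K$ and $D$, an affine hyperplane $\A_n\subset\R^d$ containing every element of $\mathcal R_n$.

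The next step is to have Alice delete the $\beta\rho_n$-neighborhood of $\Psi^{-1}(\A_n)$; for this I would invoke one of the variants of the hyperplane game introduced in Section \ref{sectiongames}, which permits Alice to delete pullbacks $\Psi^{-1}(\A)$ of affine hyperplanes $\A\subset\R^d$ and is proved equivalent to the standard hyperplane game. Let $\xx^*\in\bigcap_n B_n$ be the outcome; every $\rr\in\Q^d\cap M$ of sufficiently large height belongs to a unique window $(T_{n(\rr)-1},T_{n(\rr)}]$. If $\Psi^{-1}(\rr)\in B_{n(\rr)}$ then $\rr\in\mathcal R_{n(\rr)}\subset\A_{n(\rr)}$ and Alice's deletion forces $\|\xx^*-\Psi^{-1}(\rr)\|\ge\beta\rho_{n(\rr)}\asymp\beta\alpha^{-1}\psi_{c(k,d)}(H(\rr))$; otherwise $\|\xx^*-\Psi^{-1}(\rr)\|\ge\rho_{n(\rr)}$ holds trivially. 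Bi-Lipschitzness of $\Psi$ then gives $\dist(\Psi(\xx^*),\rr)\gtrsim\psi_{c(k,d)}(H(\rr))$, and the finitely many rationals of bounded height are absorbed into the point-dependent constant in the definition of $\BA_M$, so $\Psi(\xx^*)\in\BA_M(\psi_{c(k,d)})$.

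The main obstacle is the intrinsic Simplex Lemma itself: one must prove that rationals lying on a uniformly nondegenerate $k$-dimensional manifold, contained in a ball of radius $\rho$ on $M$ and with heights bounded by $\asymp\rho^{-1/c(k,d)}$, must lie on a common affine hyperplane of $\R^d$, and, crucially, that $c(k,d)$ is the correct exponent for which this holds (this is where the combinatorial definition of $N_{k,d}$ enters through a volume/counting estimate). A secondary but genuine obstruction is that Alice cannot in general delete a preimage $\Psi^{-1}(\A)$ in the original hyperplane game, since $\Psi^{-1}(\A)$ need not be an affine hyperplane in $\R^k$; this is precisely what necessitates the two equivalent variants of the game to be developed in Section \ref{sectiongames}.
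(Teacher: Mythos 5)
Your overall strategy is the same as the paper's: reduce to showing that $\Psi^{-1}\big(\BA_M(\psi_{c(k,d)})\big)\cup(\R^k\smallsetminus K)$ is winning, have Alice respond to Bob's ball $B_n$ by deleting a $\beta\rho_n$-neighborhood of $\Psi^{-1}(\LL_n)$ where $\LL_n$ is the hyperplane furnished by the intrinsic Simplex Lemma for the low-height rationals near $\Psi(B_n)$, and then convert the deletions into the badly approximable inequality at the limit point. But there is a genuine gap at the point you dismiss with ``I would invoke one of the variants of the hyperplane game \ldots which permits Alice to delete pullbacks $\Psi^{-1}(\A)$ of affine hyperplanes.'' No such variant is available: the two variants of Section \ref{sectiongames} are the algebraic-set game (zero sets of polynomials of degree at most $D$) and the levelset game, in which Alice may only delete zero sets of functions $f$ on $B_n$ satisfying the norm condition \eqref{CDbounds}, $\|f\|_{\CC^{D+1},B_n}\leq C_1\|f\|_{\CC^D,B_n}$; equivalence with the hyperplane game holds only under that constraint. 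The set $\Psi^{-1}(\LL_n)$ is the zero set of $f(\ss)=\ww\cdot(1,\Psi(\ss))$ with $\ww\in S^d$, which is not a polynomial, so one must check \eqref{CDbounds} for this $f$ --- and this is exactly where the hypothesis that every point of $M$ is $D$-nondegenerate is used: nondegeneracy plus compactness gives a uniform lower bound on $\|f\|_{\CC^D,\ss}$ over $\ss\in V$ and $\ww\in S^d$, while $\|f\|_{\CC^{D+1},B_n}$ is bounded above, yielding \eqref{CDbounds} with a uniform $C_1$. Your proposal instead spends the nondegeneracy hypothesis on ``making the Simplex Lemma uniform,'' but Lemma \ref{lemmasimplex} needs no nondegeneracy at all; without the legality verification the whole strategy is unjustified, and indeed for a degenerate $M$ (say, contained in a hyperplane $\LL$) the ``move'' $\Psi^{-1}(\LL)$ would be all of $U$, so some such argument is indispensable, not a formality.

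A secondary, fixable gap is in your case analysis at the end. You apply the Simplex Lemma only inside $\Psi(B_n)$ and then claim that if $\Psi^{-1}(\rr)\notin B_{n(\rr)}$ then $\|\xx^*-\Psi^{-1}(\rr)\|\geq\rho_{n(\rr)}$ ``trivially''; this is false, since $\Psi^{-1}(\rr)$ may lie just outside $B_{n(\rr)}$ while $\xx^*$ is near its boundary. The paper avoids this by applying the lemma on the doubled ball $B(\ss_n,2\rho_n)$ (the set $S_{\ss_n,2\rho_n}$), so that rationals outside the doubled ball are at distance at least $\rho_n$ from every point of $B_n$; you also need a separate case for rationals $\rr\in\Q^d\cap M$ with $\rr\notin\Psi(V)$, for which $\Psi^{-1}(\rr)$ is not even defined and one instead bounds $\dist\big(\Psi(\xx^*),M\smallsetminus\Psi(V)\big)$ from below. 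With the legality argument supplied and the doubled-ball bookkeeping corrected, your argument becomes essentially the paper's proof.
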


Using Theorem \ref{theoremBAwinningother}, we deduce as a corollary a theorem stated in the introduction:

\begin{proof}[Proof of Theorem \ref{theoremmain}
using Theorem \ref{theoremBAwinningother}]
For each $D\in\N$, let $M_D\subset M$ be the set of $D$-nondegenerate points of $M$. Since $M$ is nondegenerate, there exists $D\in\N$ such that $M_D\neq\emptyset$; then $\BA_M(\psi_{c(k,d)})\cap M_D$ is hyperplane winning relative to $M_D$. By (iii) of Proposition \ref{propositionwinningproperties}, $\BA_M(\psi_{c(k,d)})\cap M_D$ has Hausdorff dimension $k$.
\end{proof}

In order to prove Theorem \ref{theoremBAwinningother}, we will introduce two variants of the hyperplane game. The first allows Alice to delete neighborhoods of algebraic sets rather than just hyperplanes, and the second allows her to delete neighborhoods of levelsets of smooth functions. It will turn out that each of these variants is equivalent to the hyperplane game, meaning that any set which is winning for one of the games is winning for all three games.

\begin{definition}
Fix $\beta > 0$ and $D\in\N$. The rules of the \emph{$(\beta,D)$ algebraic-set game} are the same as the rules of the $\beta$-hyperplane game, except that $A_n$ is allowed to be the zero set of any nonzero polynomial of degree at most $D$. A set is \emph{algebraic-set winning} if there exists $D\in\N$ so that it is $(\beta,D)$ algebraic-set winning for all $\beta > 0$.
\end{definition}

Given a ball $B\subset\R^k$ and a $\CC^D$ function $f:B\to\R$, for each $\xx\in B$ let
\[
\|f\|_{\CC^D,\xx} := \max_{\substack{\alpha\in\Neur^k \\ |\alpha|\leq D}} |f^{(\alpha)}(\xx)|\,,
\]
where the derivative is taken using multi-index notation. Let
\[
\|f\|_{\CC^D,B} := \sup_{\xx\in B}\|f\|_{\CC^D,\xx}\,.
\]

\begin{definition}
The rules of the \emph{$(\beta,D,C_1)$-levelset game} are the same as the rules of the $\beta$-hyperplane game, except that $A_n$ is allowed to be the zero set of any nonzero $\CC^{D + 1}$ function $f:B_n\to\R$ satisfying
\begin{equation}
\label{CDbounds}
\|f\|_{\CC^{D + 1},B_n} \leq C_1\|f\|_{\CC^D,B_n}.
\end{equation}
A set is \emph{levelset winning} if there exist $D\in\N$ and $C_1 > 0$ so that it is $(\beta,D,C_1)$-levelset winning for all $\beta > 0$.
\end{definition}

The condition \eqref{CDbounds} should be interpreted heuristically as meaning that ``$f$ is close to being a polynomial of degree $D$''.

Clearly, any hyperplane winning set is algebraic-set winning and any algebraic-set winning set is levelset-winning. The remainder of this section is devoted to the proof of the following theorem:

\begin{theorem}
\label{theoremwinningequivalence}
Any levelset winning set is hyperplane winning.
\end{theorem}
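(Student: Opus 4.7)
The plan is to reduce the levelset game to the hyperplane game in two conceptual steps: first, show that every algebraic-set winning set is hyperplane winning; second, show that every levelset winning set is algebraic-set winning. Composing the two reductions gives the theorem. In both cases, the basic idea is that when Alice's strategy for the stronger game prescribes deleting a ``complicated'' set, we let her spend several moves in the weaker game to simulate a single move of the stronger game, while carefully tracking the scales involved.

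For the reduction from algebraic-set to hyperplane, the key claim is: given $D,k\in\N$ and $\beta,\beta'>0$, there exists $N=N(D,k,\beta,\beta')\in\N$ such that for any nonzero polynomial $P$ of degree $\leq D$ on $\R^k$ and any starting ball $B_0$, Alice has at most $N$ legal $\beta'$-hyperplane moves that force Bob's final ball to avoid the $\beta\rho_0$-neighborhood of $\{P=0\}$. I would prove this by induction on $D$. For $D=1$ the zero set is itself a hyperplane, so one move suffices. For the inductive step, one considers the Taylor linearization of $P$ at the center of the current ball. When the gradient is large relative to the radius, a single hyperplane move (deleting the linearization) takes care of $\{P=0\}$ on the current scale; when the gradient is small, the set $\{P=0\}$ lies in a region where, after subtracting a constant of controlled size, $P$ effectively behaves like a polynomial of strictly smaller degree, which the inductive hypothesis can handle. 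Once all the simulated algebraic-set moves are discharged in this manner, Alice continues to apply the algebraic-set strategy.

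For the reduction from levelset to algebraic-set, I would use Taylor's theorem together with the condition \eqref{CDbounds}: on a sufficiently small sub-ball $B\subset B_n$ centered at $\xx$, the function $f$ differs from its degree-$D$ Taylor polynomial $P_\xx$ at $\xx$ by at most $O(C_1 r^{D+1})$, while the $C^D$-norm of $f$ (and hence, by reverse engineering, of $P_\xx$) controls the natural scale. Choosing $B$ small enough, the $\epsilon$-neighborhood of $\{f=0\}\cap B$ is contained in a comparable neighborhood of $\{P_\xx=0\}\cap B$, so Alice can play $P_\xx$ in the algebraic-set game in place of $f$, inserting a few preparatory shrinking moves to get to $B$. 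The norm bound \eqref{CDbounds} is what makes the Taylor remainder controllable in terms of the leading-order behavior of $f$, so that the substitution of $P_\xx$ for $f$ does not lose too much.

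The main obstacle is the inductive covering argument in the first reduction, namely pinning down the dichotomy ``large gradient $\Rightarrow$ hyperplane approximation works'' versus ``small gradient $\Rightarrow$ reduce to smaller degree'' uniformly across all sub-balls Bob might choose. In particular, one has to ensure that Bob cannot repeatedly steer into regions where the current polynomial approximation degenerates in an awkward way; this requires a bookkeeping argument controlling how many times the degree can effectively drop, together with the standard scale-change tricks for games of this type (cf.\ \cite{BFKRW, BFS1}).
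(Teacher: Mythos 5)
Your overall architecture is the same as the paper's: you split the proof into ``algebraic-set winning implies hyperplane winning'' and ``levelset winning implies algebraic-set winning'' (the paper's Corollaries \ref{corollaryAWimpliesHW} and \ref{corollaryLWimpliesAW}), and your second reduction --- replacing $f$ on a small ball by a degree-$D$ Taylor polynomial, with the remainder controlled via \eqref{CDbounds}, and playing the polynomial's zero set in place of $Z_f$ --- is essentially the paper's Lemmas \ref{lemmaapproximatelevelset1} and \ref{lemmaapproximatelevelset2} (the paper expands at a point where $\|f\|_{\CC^D,\cdot}$ is within a factor $2$ of its supremum over the ball, and uses compactness of the space of normalized degree-$\leq D$ polynomials, Lemma \ref{lemmafgamma}, to convert smallness of the polynomial into proximity to its zero set; centering at the ball's center can be made to work, but only after checking that on balls of radius small compared to $1/C_1$ the pointwise $\CC^D$-norm at the center is comparable to the norm over the ball).

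The genuine gap is in your first reduction. The inductive dichotomy you propose does not work as stated: subtracting a constant from $P$ never lowers its degree, and a degree-$D$ polynomial with small gradient on Bob's ball does not ``effectively behave like'' a polynomial of smaller degree --- points of $Z_P$ where the gradient is small are precisely the near-singular points of the variety, and these are the whole difficulty. The paper's Lemma \ref{lemmaavoidvariety} resolves this by inducting on an honest lower-degree polynomial, namely a nonvanishing partial derivative $\partial P/\partial x_i$: Alice first uses the inductive strategy to push Bob's ball out of a neighborhood of the derivative's zero set; away from that neighborhood, $Z_P$ is a smooth hypersurface, and by compactness its intersection with any sufficiently small ball lies in a thin slab around a single hyperplane, so one further hyperplane move finishes. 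Your sketch also omits the uniformity in $P$: the scale (or move count) at which avoidance is achieved must depend only on $(k,D,\beta)$, not on the particular polynomial output by Alice's algebraic-set strategy; the paper needs a separate compactness argument for this (Lemma \ref{lemmaavoidvariety2}, using the compact space of normalized polynomials and upper semicontinuity of $P\mapsto Z_P$ in the Vietoris topology), and nothing in your induction supplies it. Finally, formulating the simulation as ``at most $N$ moves'' is the wrong currency, since Bob is never forced to shrink his radius; the workable formulation is in terms of Bob's first ball of radius below a threshold $\gamma\rho_0$ (as in Lemma \ref{lemmaavoidvariety}), after which that ball is reinterpreted as a legal move in a $(\beta\gamma,D)$ algebraic-set game.
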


We begin by introducing some notation.
\medskip

\begin{notation}~
\begin{itemize}
\item For $f:U\to\R$, $Z_f$ will denote the zero set of $f$, i.e. $Z_f = f^{-1}(0)$.
\item For $D\in\N$, $\PP_D$ will denote the set of all polynomials of degree at most $D$ whose largest coefficient has magnitude $1$. Note that $\PP_D$ is a compact topological space; moreover, every nonzero polynomial of degree at most $D$ is a scalar multiple of an element of $\PP_D$.
\end{itemize}
\end{notation}

\begin{lemma}
\label{lemmaavoidvariety}
Fix $k\in\N$ and $0 < \beta\leq 1$, and let $f:\R^k\rightarrow\R$ be a nonzero polynomial. Suppose that Bob and Alice are playing the $\beta$-hyperplane game, and suppose that Bob's first move is $B_0 = B(\0,1)$. Then there exists $\gamma > 0$ so that Alice has a strategy to guarantee that Bob's first ball of radius less than $\gamma$ (assuming that such a ball exists) is disjoint from $Z_f^{(\gamma)}$.
\end{lemma}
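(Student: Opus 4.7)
My plan is to prove Lemma \ref{lemmaavoidvariety} by induction on the degree $D$ of $f$. The base cases are straightforward: when $D = 0$ the polynomial $f$ is a nonzero constant so $Z_f = \emptyset$, and when $D = 1$ the set $Z_f$ is itself a hyperplane, which Alice simply deletes on her first move, forcing every subsequent ball to lie at distance at least $\beta$ from $Z_f$. In either case a suitable $\gamma$ exists (e.g.\ $\gamma := \beta$).

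For the inductive step with $\deg f = D \geq 2$, I would relabel coordinates so that $\partial_1 f \not\equiv 0$ and apply the inductive hypothesis to the lower-degree polynomial $\partial_1 f$, obtaining some $\gamma_1 > 0$ and a ``Stage 1'' strategy guaranteeing that Bob's first ball $B_N$ of radius less than $\gamma_1$ is disjoint from $Z_{\partial_1 f}^{(\gamma_1)}$. Continuity of $|\partial_1 f|$ combined with compactness of the ``good set'' $\{\xx \in B_0 : \dist(\xx, Z_{\partial_1 f}) \geq \gamma_1\}$ yields a uniform lower bound $|\partial_1 f| \geq c_1 > 0$ on $B_N$ and on every subsequent Bob ball. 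Let $M_1 := \sup_{B_0}\|\nabla f\|_\infty$ and let $M_2'$ be a constant for which $|f(\xx) - f(\yy) - \nabla f(\yy) \cdot (\xx - \yy)| \leq M_2'\|\xx - \yy\|_\infty^2$ for all $\xx,\yy \in B_0$ (such a bound exists since $f$ is a polynomial). Set $B_\ast := c_1\beta/(8M_2')$. In ``Stage 2a,'' whenever $\rho_n > B_\ast$ Alice deletes the hyperplane through the center of $B_n$, which (in max-norm) forces $\rho_{n+1} \leq (1-\beta)\rho_n/2$, so after finitely many turns Bob's radius first enters the window $(\beta B_\ast, B_\ast]$, at some turn $n^*$. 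In ``Stage 2b,'' Alice picks a point $\yy \in Z_f$ at distance at most $\rho_{n^*}$ from $B_{n^*}$ (if no such point exists, then $B_{n^*}$ is already separated from $Z_f$ by more than $\rho_{n^*}$ and she plays an arbitrary hyperplane) and deletes $A_{n^*} := \{\xx : \nabla f(\yy) \cdot (\xx - \yy) = 0\}$, which is a bona fide hyperplane since $|\nabla f(\yy)| \geq c_1$. For $\xx \in B_{n^*+1}$, the constraint $\dist(\xx, A_{n^*}) \geq \beta\rho_{n^*}$ together with $\|\nabla f(\yy)\|_1 \geq c_1$ yields $|\nabla f(\yy) \cdot (\xx - \yy)| \geq c_1\beta\rho_{n^*}$; the choice $\rho_{n^*} \leq B_\ast$ bounds the Taylor remainder by $c_1\beta\rho_{n^*}/2$, giving $|f(\xx)| \geq c_1\beta\rho_{n^*}/2$ and, via the mean value inequality with global bound $M_1$, $\dist(\xx, Z_f) \geq c_1\beta\rho_{n^*}/(2kM_1)$. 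Taking $\gamma$ to be a sufficiently small positive multiple of $c_1\beta^2 B_\ast/(kM_1)$ ensures both $\gamma < \rho_n$ throughout Stages 1 and 2 (since the non-increasing radii satisfy $\rho_n \geq \rho_{n^*+1} \geq \beta^2 B_\ast$ for $n \leq n^*+1$) and $\dist(B_{n^*+1}, Z_f) \geq \gamma$; since all later Bob-balls lie inside $B_{n^*+1}$, Bob's first ball of radius below $\gamma$ (if any) is automatically disjoint from $Z_f^{(\gamma)}$.

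The main obstacle is the parameter balance in Stage 2. The tangent-hyperplane argument only produces the desired bound when the quadratic Taylor remainder is dominated by the linear term, forcing $\rho_{n^*} \leq B_\ast$; but the resulting distance bound is itself linear in $\rho_{n^*}$, so we cannot afford $\rho_{n^*}$ to be much smaller than $B_\ast$ either. Stage 2a is introduced precisely to pin Bob's radius into the narrow window $(\beta B_\ast, B_\ast]$, exploiting the game-theoretic fact that through-the-center moves force shrinking by a factor at most $(1-\beta)/2$ per turn while the game rules guarantee $\rho_{n+1} \geq \beta\rho_n$ from below. A related subtlety is that we cannot bypass Stage 2a by simply shrinking $\gamma_1$ to make $\gamma_1 \leq B_\ast$ directly, because $c_1$ (and hence $B_\ast$) is an infimum that depends on $\gamma_1$ and can itself degrade as $\gamma_1 \to 0$; the two scales must therefore be decoupled through the intermediate shrinking phase.
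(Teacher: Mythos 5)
Your skeleton --- induction on $\deg f$ through a nonvanishing partial derivative, with the inductive hypothesis used to push Bob into a region where $|\partial_1 f|$ is bounded below --- is the same as the paper's, but your inductive step is executed differently: the paper argues softly, noting that $K := Z_f\cap B(\0,2)$ minus a neighborhood of $Z_{\partial_1 f}$ is a compact subset of a smooth hypersurface and hence, at all sufficiently small scales, lies inside a thin hyperplane neighborhood which Alice deletes, followed by a four-case metric estimate; you instead delete an explicit tangent hyperplane at a nearby zero $\yy$ and control $|f|$ quantitatively by Taylor expansion. That quantitative route can be made to work, and your Stage 2a is in fact unnecessary: since the conclusion is conditional on Bob eventually producing a small ball, Alice can simply wait (as in the paper), and Bob's first ball of radius at most the threshold automatically has radius greater than $\beta$ times the threshold. (Also, for $\beta>1/3$ a through-the-center deletion leaves Bob with no legal move; under the paper's convention he then loses and the statement is vacuous, so Stage 2a is harmless, but this deserves a remark.)

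There is, however, one genuine gap as written: you invoke $\|\nabla f(\yy)\|_1\ge c_1$ for the chosen zero $\yy$, but $c_1$ was established only on $\{\xx\in B_0:\dist(\xx,Z_{\partial_1 f})\ge\gamma_1\}$, i.e.\ on Bob's balls after Stage 1. Your $\yy$ is only required to lie within $\rho_{n^*}\le B_\ast$ of $B_{n^*}$, and $B_\ast=c_1\beta/(8M_2')$ has no reason to be smaller than $\gamma_1$; thus $\yy$ may lie within $\gamma_1$ of $Z_{\partial_1 f}$ --- it could even be a singular zero of $f$ with $\nabla f(\yy)=\0$ --- in which case $A_{n^*}$ is not a hyperplane and the bound $|\nabla f(\yy)\cdot(\xx-\yy)|\ge c_1\beta\rho_{n^*}$ fails. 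The repair is routine but must be made: cap the Stage-2 threshold at $\min(B_\ast,\gamma_1/2)$ and take the gradient lower bound over the compact set $\{\xx\in B(\0,3):\dist(\xx,Z_{\partial_1 f})\ge\gamma_1/2\}$ (no circularity, since that constant depends only on $f$ and $\gamma_1$), so that every admissible $\yy$ is covered; this is precisely the scale/constant decoupling you flag, but applied to $\yy$ rather than to Bob's balls. Smaller slips to fix in the same pass: $\|\xx-\yy\|_\infty$ can be as large as $3\rho_{n^*}$, so the $8$ in $B_\ast$ should be about $18$; $M_1$ and $M_2'$ must be taken over a slightly larger ball such as $B(\0,2)$ (a polynomial has no global gradient bound), with zeros outside $B(\0,2)$ handled trivially because they are at distance at least $1$ from $B_0$ --- exactly the paper's cases (2)--(3); and if Bob's radius is already at most the threshold when Stage 1 ends, the relevant window is $[\beta\gamma_1,\min(B_\ast,\gamma_1))$ rather than $(\beta B_\ast,B_\ast]$, so $\gamma$ should be defined using $\min(B_\ast,\gamma_1)$.
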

\begin{proof}
The proof is by induction on the degree of $f$. If $\deg(f) = 0$, then $f$ is a nonzero constant and $Z_f = \emptyset$, so the lemma is trivially satisfied. Next, suppose that the lemma is true for all polynomials of degree strictly less than the degree of $f$. In particular, it is true for $\w f := (\del/\del x_i)[f]$, where $i = 1,\ldots,d$ is chosen so that $\w f$ is nonzero. Let $\w\gamma > 0$ be the value given by the induction hypothesis. Since $L := Z_f\butnot Z_{\w f}$ is a smooth $(k - 1)$-dimensional submanifold of $\R^k$, for all $x\in L$ and for all sufficiently small neighborhoods $B$ of $x$, $L\cap B$ is contained in a small neighborhood of a hyperplane. Thus since $K := Z_f\cap B(\0,2)\butnot \Int(Z_{\w f}^{(\w\gamma/2)})$ is a compact subset of $L$, there exists $\delta > 0$ with the following property:
\begin{equation}
\label{deltadef}
\begin{split}
&\text{For every ball $B(\xx,\rho)\subset\R^k$ satisfying $0 < \rho \leq \delta$,}\\
&\text{there exists a hyperplane $\LL\subset\R^k$ such that $K\cap B(\xx,2\rho)\subset \LL^{(\beta\rho/2)}$.}
\end{split}
\end{equation}
Let $\gamma = \beta^2\min(\w\gamma,\delta)/2$.

Alice's strategy is now as follows: Use the strategy from the induction hypothesis to guarantee that Bob's first ball of radius less than $\w\gamma$ is disjoint from $Z_{\w f}^{(\w \gamma)}$. If the radius of this ball is greater than $\delta$, make further moves arbitrarily until Bob chooses a ball of radius less than $\delta$. Either way, let $B = B(\xx,\rho)$ denote Bob's first ball satisfying $\rho\leq \min(\w\gamma,\delta)$, and note that $\rho\geq\beta\min(\w\gamma,\delta) = 2\gamma/\beta$. In particular $\rho > \gamma$, so Bob has not yet chosen a ball of radius less than $\gamma$. Let $\LL$ be a hyperplane such that $K\cap B(\xx,2\rho)\subset \LL^{(\beta\rho/2)}$, guaranteed to exist by \eqref{deltadef}. Alice's next move will be to delete the $\beta\rho$-neighborhood of the hyperplane $\LL$. Following that, she will make arbitrary moves until Bob chooses a ball $\w B$ of radius less than $\gamma$.

We claim that $\w B$ is disjoint from $Z_f^{(\gamma)}$. Indeed, fix $\yy\in \w B\subset B\butnot \LL^{(\beta\rho)}$. Then for $\zz\in Z_f$, either
\begin{itemize}
\item[(1)] $\zz\in K\cap B(\xx,2\rho)\subset \LL^{(\beta\rho/2)}$, in which case \[\|\zz - \yy\|\geq \dist(\LL^{(\beta\rho/2)},\R^k\butnot\LL^{(\beta\rho)}) = \beta\rho/2\geq\gamma,\]
\item[(2)] $\zz\notin B(\xx,2\rho)$, in which case \[\|\zz - \yy\| \geq \dist\big(\R^k\butnot B(\xx,2\rho),B(\xx,\rho)\big) = \rho \geq \gamma,\]
\item[(3)] $\zz\notin B(\0,2)$, in which case \[\|\zz - \yy\| \geq \dist\big(\R^k\butnot B(\0,2),B(\0,1)\big) = 1 \geq \gamma,\text{ or}\]
\item[(4)] $\zz\in Z_f\cap B(\0,2)\butnot K\subset Z_{\w f}^{(\w\gamma/2)}$, in which case \[\|\zz - \yy\| \geq \dist(Z_{\w f}^{(\w\gamma/2)},\R^k\butnot Z_{\w f}^{(\w\gamma)}) \geq \w\gamma/2 \geq \gamma.\]
\end{itemize}
Thus $\yy\notin Z_f^{(\gamma)}$.
\end{proof}

We next show that the constant $\gamma$ can be made to depend only on the degree of $f$ and not on $f$ itself.

\begin{lemma}
\label{lemmaavoidvariety2}
Fix $k,D\in\N$ and $0 < \beta\leq 1$. There exists $\gamma > 0$ such that for any nonzero polynomial $f:\R^k\to\R$ of degree at most $D$, if Bob and Alice play the $\beta$-hyperplane game and if Bob's first move is $B_0 = B(\0,1)$, then Alice has a strategy to guarantee that Bob's first ball of radius less than $\gamma$ (assuming that such a ball exists) is disjoint from $Z_f^{(\gamma)}$.
\end{lemma}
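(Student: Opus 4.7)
The plan is to upgrade Lemma \ref{lemmaavoidvariety} so that the constant $\gamma$ depends only on $k, D, \beta$, by exploiting the compactness of $\PP_D$. Since $Z_{cf} = Z_f$ for any nonzero scalar $c$, it suffices to prove the uniform statement for $f \in \PP_D$.

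I would proceed by induction on $D$; the base case $D = 0$ is trivial since such polynomials have empty zero sets. For the inductive step, assume the uniform conclusion for degree $\leq D - 1$ with constant $\w\gamma = \w\gamma(k, D-1, \beta)$. Given $f \in \PP_D$, I choose a coordinate direction $i$ so that the maximum coefficient of $\partial_i f$ has magnitude at least $1$; this is possible because $f$ has a coefficient of magnitude $1$ at a multi-index $\alpha$ of positive order, and then the partial derivative in the direction of any nonzero entry of $\alpha$ inherits a coefficient of magnitude at least $1$. The normalized polynomial $\tilde g := \partial_i f/c \in \PP_{D-1}$, where $c$ is the maximum coefficient magnitude of $\partial_i f$ (satisfying $1\leq c\leq D$), and $Z_{\tilde g} = Z_{\partial_i f}$. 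The inductive hypothesis applied to $\tilde g$ provides Alice a strategy for the recursive phase with the uniform constant $\w\gamma$.

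The main obstacle is to show that the flatness modulus $\delta$ in \eqref{deltadef} can be chosen uniformly over $f \in \PP_D$ (and over the choice of coordinate $i$). For each fixed $f_0 \in \PP_D$, Lemma \ref{lemmaavoidvariety} yields a positive $\delta_{f_0}$, essentially because $|\partial_i f_0|$ is continuous and nonzero on the compact set $K_{f_0} = Z_{f_0} \cap B(\0, 2)\setminus \Int(Z_{\partial_i f_0}^{(\w\gamma/2)})$, hence bounded below by some positive constant, giving uniform control on the $\CC^2$-structure of $K_{f_0}$ via the implicit function theorem. I plan to argue that this property is stable under small perturbations: for every $f_0 \in \PP_D$, there is a neighborhood $U_{f_0}\subset \PP_D$ and a constant $\delta'_{f_0} > 0$ such that $\delta_g \geq \delta'_{f_0}$ for all $g \in U_{f_0}$. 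Indeed, for $g$ close to $f_0$ the sets $K_g$ stay close to $K_{f_0}$, $|\partial_i g|$ stays close to $|\partial_i f_0|$ on $K_g$, and hence the hyperplane-approximation bounds persist. Compactness of $\PP_D$ then yields a finite subcover of $\{U_{f_0}\}_{f_0\in\PP_D}$ and a uniform $\delta := \min_j \delta'_{f_j}$.

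Combining the uniform $\w\gamma$ from the induction and the uniform $\delta$ from the compactness argument, the formula $\gamma = \beta^2 \min(\w\gamma, \delta)/2$ from the proof of Lemma \ref{lemmaavoidvariety} produces a constant depending only on $k, D, \beta$, completing the induction. The technical heart will be verifying the perturbation stability claim rigorously; the delicate case is when $K_{f_0}$ degenerates (for instance, becomes empty or reduces to isolated points), which must be handled either directly (the flatness condition being vacuous or trivial) or by carving out such degenerate $f_0$ into a separate step in the compactness argument.
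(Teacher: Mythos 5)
Your overall plan---upgrade Lemma \ref{lemmaavoidvariety} to a uniform statement over $\PP_D$ by induction plus compactness---is workable in spirit, but the step you yourself identify as the ``technical heart'' is genuinely missing, and the justification you sketch for it is based on a false continuity claim. You assert that for $g$ near $f_0$ ``the sets $K_g$ stay close to $K_{f_0}$,'' but the map $g\mapsto Z_{\del_i g}$ is only \emph{upper} semicontinuous, not lower semicontinuous: under a small perturbation the deleted region $\Int\big(Z_{\del_i g}^{(\w\gamma/2)}\big)$ can shrink drastically or vanish altogether (e.g.\ when $\del_i f_0$ has a zero of even order that perturbs away, so that $Z_{\del_i g}=\emptyset$). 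Consequently $K_g$ need not lie in any small neighborhood of $K_{f_0}$; it can acquire points of $Z_g$ near $Z_{f_0}\cap Z_{\del_i f_0}$, exactly where $|\del_i g|$ is small and the implicit-function-theorem control on the local flatness of $Z_g$ (which is what produces $\delta_{f_0}$ for the fixed polynomial $f_0$) gives nothing. So the claimed lower bound $\delta_g\geq\delta'_{f_0}$ on a neighborhood of $f_0$ does not follow from soft perturbation arguments; it would require a genuinely quantitative analysis of the flatness of $Z_g$ at points far from $Z_{\del_i g}$ but close to the critical locus, which you have not supplied. A secondary, repairable error: it is not true that every $f\in\PP_D$ has a coefficient of magnitude $1$ at a multi-index of positive order (the extremal coefficient may be the constant term, with all other coefficients tiny), so your normalization $1\leq c\leq D$ for $\tilde g=\del_i f/c$ can fail; that case must be split off (there $|f|$ is bounded below on $B(\0,2)$ and the conclusion is vacuous), but as written the reduction is incorrect.

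For comparison, the paper's proof avoids uniformizing the induction altogether and only uses the semicontinuity that actually holds: since $f\mapsto Z_f$ is upper semicontinuous (Vietoris) on the compact space $\PP_D$, one covers $\PP_D$ by finitely many neighborhoods $U_{f_i}$ such that $Z_g\cap B(\0,2)\subset Z_{f_i}^{(\gamma_{f_i}/2)}$ for all $g\in U_{f_i}$, where $\gamma_{f_i}$ is the constant of Lemma \ref{lemmaavoidvariety} for $f_i$; Alice then simply plays the Lemma \ref{lemmaavoidvariety} strategy for the fixed nearby polynomial $f_i$, and avoiding $Z_{f_i}^{(\gamma_{f_i})}$ automatically avoids $Z_g^{(\gamma)}$ with $\gamma=\min_i\gamma_{f_i}/2$. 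If you want to salvage your write-up, replacing the perturbation-stability claim by this ``play the strategy for a nearby representative'' device is the natural fix, since it needs only the direction of semicontinuity that is true.
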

\begin{proof}
The map $\PP_D\ni f\mapsto Z_f$ is upper semicontinuous in the Vietoris topology (cf. \cite[\64.F]{Kechris}), meaning that for any $f\in \PP_D$, $\gamma > 0$, and $K\subset\R^k$ compact, there exists a neighborhood of $f$ in $\PP_D$ such that all $g$ in the neighborhood satisfy $Z_g\cap K\subset Z_f^{(\gamma)}$. In particular, for each $f\in \PP_D$, let $\gamma_f$ be as in Lemma \ref{lemmaavoidvariety}, and let $U_f\subset \PP_D$ be a neighborhood of $f$ such that for all $g\in U_f$, $Z_g\cap B(\0,2)\subset Z_f^{(\gamma_f/2)}$. Let $(U_{f_i})_{i = 1}^n$ be a finite subcover (which exists since $\PP_D$ is compact) and let $\gamma = \min_{i = 1}^n \gamma_{f_i}/2$. Then for all $g\in \PP_D$, $g\in U_{f_i}$ for some $i$, and so
\[
Z_g^{(\gamma)} \cap B(\0,1)\subset Z_{f_i}^{(\gamma_{f_i}/2 + \gamma)} \subset Z_{f_i}^{(\gamma_{f_i})}.
\]
Since Alice has a strategy to avoid $Z_{f_i}^{(\gamma_{f_i})}$ by the time Bob's radius is less than $\gamma_{f_i}$, she has a strategy to avoid $Z_g^{(\gamma)}$ by the time Bob's radius is less than $\gamma$.
\end{proof}

Let $k$, $D$, $\beta$, and $\gamma$ be as above. Fix $\xx\in\R^k$ and $\rho > 0$, and let
\begin{equation}
\label{Txrho}
T_{\xx,\rho}(\ww) = \xx + \rho\ww,
\end{equation}
so that $T_{\xx,\rho}\big(B(\0,1)\big) = B(\xx,\rho)$. Translating Lemma \ref{lemmaavoidvariety} via the map $T_{\xx,\rho}$, we see that if Bob and Alice play the $\beta$-hyperplane game, then after Bob makes a move $B(\xx,\rho)$, Alice may devote the next several turns to ensuring that Bob's first ball of radius less than $\gamma\rho$ is disjoint from $Z_{f\circ T_{\xx,\rho}^{-1}}^{(\gamma\rho)}$. This allows her to translate any winning strategy for the $(\beta\gamma,D)$ algebraic-set game into a winning strategy for the $\beta$-hyperplane game. Indeed, if in the algebraic-set game Alice responds to Bob's move $B(\xx,\rho)$ by deleting the set $Z_f^{(\gamma\rho)}$, then in the $\beta$-hyperplane game Alice simply spends the next several turns avoiding $Z_f^{(\gamma\rho)}$. Bob's first ball of radius less than $\gamma\rho$ will still have radius $\geq \beta\gamma\rho$ by the rules of the $\beta$-hyperplane game, so it can be interpreted as Bob's next move in the $(\beta\gamma,D)$ algebraic set game. Summarizing, we have the following:

\begin{corollary}
\label{corollaryAWimpliesHW}
Any algebraic-set winning set is hyperplane winning.
\end{corollary}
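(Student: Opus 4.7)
The plan is a simulation argument. Given a set $S$ that is algebraic-set winning, I would convert Alice's strategy for a suitably chosen algebraic-set game into a winning strategy for the $\beta$-hyperplane game, using Lemma \ref{lemmaavoidvariety2} to replace each algebraic-set deletion by a finite sequence of hyperplane deletions.

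First I would fix the data. Since $S$ is algebraic-set winning, there is $D\in\N$ such that Alice has a winning strategy $\sigma_{\beta'}$ for the $(\beta',D)$ algebraic-set game on $S$ for every $\beta' > 0$. To show $S$ is $\beta$-hyperplane winning, let $\gamma = \gamma(k,D,\beta) > 0$ be the constant from Lemma \ref{lemmaavoidvariety2}, and set $\sigma := \sigma_{\beta\gamma}$.

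Next I would describe the simulation. Alice maintains a virtual run of the $(\beta\gamma,D)$ algebraic-set game alongside the actual $\beta$-hyperplane game. At the start of each virtual round, Bob's most recent virtual ball $B(\xx,\rho)$ coincides with some current hyperplane-game ball. Alice consults $\sigma$ for a polynomial $f$ of degree $\leq D$, so that her virtual deletion is $Z_f^{(\beta\gamma\rho)}$. In the hyperplane game, she applies Lemma \ref{lemmaavoidvariety2} to the polynomial $f \circ T_{\xx,\rho}$, where $T_{\xx,\rho}$ is the rescaling of \eqref{Txrho} that sends $B(\0,1)$ to $B(\xx,\rho)$; translating the conclusion back via $T_{\xx,\rho}$ yields a finite sequence of legal hyperplane deletions guaranteeing that Bob's first subsequent hyperplane-game ball of radius less than $\gamma\rho$ is disjoint from $Z_f^{(\gamma\rho)}$. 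Alice designates this ball as Bob's next virtual move and proceeds to the next virtual round.

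The essential check is that the designated ball is a legal move in the virtual $(\beta\gamma,D)$ game. Its radius is at least $\beta\cdot\gamma\rho = \beta\gamma\rho$ by the hyperplane-game rule applied to Bob's preceding ball (which had radius $\geq \gamma\rho$), matching the shrinkage bound of the $(\beta\gamma,D)$ game; and it is contained in the previous virtual ball minus $Z_f^{(\beta\gamma\rho)} \subset Z_f^{(\gamma\rho)}$, respecting the virtual deletion. Since $\sigma$ is winning and the virtual radii shrink by a factor $< \gamma$ per virtual round, the intersection of the virtual balls is a point of $S$; because the virtual balls form a subsequence of the hyperplane-game balls, this point also lies in the intersection of Alice's hyperplane-game balls, so Alice wins. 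I anticipate no substantive obstacle; the only care needed is the radius bookkeeping just described, which mirrors the calculation already used in the proof of Lemma \ref{lemmaavoidvariety}.
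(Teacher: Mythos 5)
Your proposal is correct and follows essentially the same route as the paper: rescale Lemma \ref{lemmaavoidvariety2} via the map $T_{\xx,\rho}$ of \eqref{Txrho}, let Alice spend several hyperplane-game turns forcing Bob's first ball of radius below $\gamma\rho$ to avoid $Z_f^{(\gamma\rho)}$, and reinterpret that ball (of radius $\geq\beta\gamma\rho$) as Bob's next move in the $(\beta\gamma,D)$ algebraic-set game, so that a winning strategy for the latter transfers to the $\beta$-hyperplane game. The radius bookkeeping and the inclusion $Z_f^{(\beta\gamma\rho)}\subset Z_f^{(\gamma\rho)}$ you check are exactly the points the paper relies on.
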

\begin{proof}
For each $k,D\in\N$ and $0 < \beta\leq 1$, if $\gamma > 0$ is as in Lemma \ref{lemmaavoidvariety2}, then every $(\beta\gamma,D)$ algebraic-set winning subset of $\R^k$ is $\beta$-hyperplane winning.
\end{proof}

To complete the proof of Theorem \ref{theoremwinningequivalence}, we must show that every levelset winning set is algebraic-set winning. For this, we will need three more lemmas:

\begin{lemma}
\label{lemmafgamma}
Fix $k,D\in\N$ and $\beta > 0$. Then there exists $\gamma > 0$ such that for any $f\in \PP_D$, there exists $g\in \PP_D$ such that
\[
f^{-1}(-\gamma,\gamma)\cap B(\0,1)\subset Z_g^{(\beta)}.
\]
\end{lemma}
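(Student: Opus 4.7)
The plan is to combine a pointwise existence result for each fixed $f \in \PP_D$ with a compactness argument on the parameter space $\PP_D$ to upgrade it to a single uniform $\gamma$ that works for every $f$.

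First I would prove the pointwise version: for a fixed $f \in \PP_D$, produce $\gamma_f > 0$ and $g_f \in \PP_D$ with $f^{-1}(-\gamma_f, \gamma_f) \cap B(\0, 1) \subset Z_{g_f}^{(\beta/2)}$. Two cases arise. If $Z_f \cap B(\0, 1) = \emptyset$, then $|f|$ attains a positive minimum $c_f$ on the compact set $B(\0, 1)$; take $\gamma_f := c_f$, which renders the left-hand side empty, and pick $g_f$ to be any element of $\PP_D$. Otherwise I take $g_f := f$ itself and argue by contradiction: if no such $\gamma_f$ existed, then one could extract a sequence $x_n \in B(\0, 1)$ with $|f(x_n)| \to 0$ and $\dist(x_n, Z_f) \geq \beta/2$; passing to a convergent subsequence $x_n \to x \in B(\0, 1)$ would give $f(x) = 0$ and hence $x \in Z_f$, contradicting $\dist(x_n, Z_f) \geq \beta/2$.

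To pass from pointwise to uniform, note that every element of $\PP_D$ has coefficients bounded by $1$ in magnitude, so the compact topology on $\PP_D$ induces uniform convergence on the compact set $B(\0, 1)$. For each $f \in \PP_D$ set
\[
U_f := \{\tilde f \in \PP_D : \|\tilde f - f\|_{\CC^0, B(\0, 1)} < \gamma_f/2\}.
\]
Then for any $\tilde f \in U_f$ and any $x \in B(\0, 1)$ with $|\tilde f(x)| < \gamma_f/2$, the triangle inequality yields $|f(x)| < \gamma_f$, so $x \in Z_{g_f}^{(\beta/2)} \subset Z_{g_f}^{(\beta)}$. Extract a finite subcover $U_{f_1}, \ldots, U_{f_n}$ of $\PP_D$ and put $\gamma := \min_i \gamma_{f_i}/2$. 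Every $f \in \PP_D$ lies in some $U_{f_i}$, and then $g := g_{f_i}$ witnesses the desired inclusion.

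I do not anticipate a serious obstacle; the argument is a standard compactness packaging of an elementary pointwise fact, entirely analogous in spirit to the upgrade from Lemma~\ref{lemmaavoidvariety} to Lemma~\ref{lemmaavoidvariety2} earlier in this section. The only mildly delicate point is the vacuous case $Z_f \cap B(\0, 1) = \emptyset$ in the pointwise step, where $g_f$ has no canonical choice and must be picked arbitrarily from $\PP_D$ (which is nonempty, containing, for instance, any coordinate monomial).
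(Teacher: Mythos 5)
Your argument is correct and is essentially the paper's proof: both rest on the pointwise fact that for each fixed polynomial $g\in\PP_D$ the value $|g|$ is bounded away from $0$ on $B(\0,1)\setminus Z_g^{(\beta)}$, upgraded to a uniform $\gamma$ by covering the compact space $\PP_D$ by neighborhoods whose center serves as the witness $g$ for every $f$ in the neighborhood, and then taking a finite subcover. The paper defines these neighborhoods directly by the open condition $\min_{B(\0,1)\setminus Z_g^{(\beta)}}|f| > \gamma_g$ rather than via sup-norm balls and the triangle inequality, but that, like your separate treatment of the case $Z_f\cap B(\0,1)=\emptyset$, is only a cosmetic difference.
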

\begin{proof}
For each $g\in \PP_D$, $|g|$ is bounded uniformly away from $0$ on $B(\0,1)\butnot Z_g^{(\beta)}$. Let $\gamma_g > 0$ be strictly less than this uniform bound, and let $U_g$ be the set of all polynomials $f\in \PP_D$ such that $\min_{B(\0,1)\butnot Z_g^{(\beta)}}|f| > \gamma_g$. Then $U_g$ is an open set containing $g$. Letting $(U_{g_i})_{i = 1}^n$ be a finite subcover, the lemma holds with $\gamma = \min_{i = 1}^n \gamma_{g_i}$.
\end{proof}

\begin{lemma}
\label{lemmaapproximatelevelset1}
Fix $k,D\in\N$ and $\beta > 0$, and let $B = B(\0,1)$. There exists $\delta > 0$ such that if $f:B\to\R$ is a $\CC^{D + 1}$ function satisfying
\[
\sup_B |f^{(\alpha)}| \leq \delta\|f\|_{\CC^D,B} \all \alpha\in\Neur^k \text{ with }|\alpha| = D + 1,
\]
then there exists $g\in \PP_D$ such that
\[
Z_f \subset f^{-1}(-\delta\|f\|_{\CC^D,B},\delta\|f\|_{\CC^D,B}) \subset Z_g^{(\beta)}.
\]
In particular, $Z_f^{(\beta)} \subset Z_g^{(2\beta)}$.
\end{lemma}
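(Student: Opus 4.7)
The plan is to approximate $f$ on $B$ by a Taylor polynomial of degree $D$ and then transfer the conclusion to an element of $\PP_D$ via Lemma \ref{lemmafgamma}. The hypothesis that the $(D+1)$-st derivatives of $f$ are at most $\delta\|f\|_{\CC^D,B}$ is precisely what will make the Taylor remainder negligible on $B$ relative to the overall size of $f$.

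First I will pick $x_0 \in B$ so that $\|f\|_{\CC^D,x_0}$ is within a factor of $2$ of $M := \|f\|_{\CC^D,B}$, and let $P$ denote the degree-$D$ Taylor polynomial of $f$ centered at $x_0$. Taylor's theorem with Lagrange remainder gives
\[
\sup_{x\in B}|f(x)-P(x)| \leq C'_{k,D}\,\delta M,
\]
where $C'_{k,D}$ depends only on $k$ and $D$ (using $\|x-x_0\|\leq 2$). The Taylor coefficients $f^{(\alpha)}(x_0)/\alpha!$ of $P$ expressed in powers of $x-x_0$ are comparable to the standard coefficients of $P$ expressed in powers of $x$, by a bi-Lipschitz change of basis whose constants depend only on $k$ and $D$, since $x_0\in B$ is bounded. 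Hence if $\lambda$ denotes the largest magnitude among the standard coefficients of $P$, then $c_{k,D}\,M \leq \lambda \leq C_{k,D}\,M$ for some constants $c_{k,D},C_{k,D}>0$, and the normalized polynomial $g_0 := P/\lambda$ belongs to $\PP_D$.

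Next, let $\gamma > 0$ be the constant produced by Lemma \ref{lemmafgamma} for the given $\beta$, and apply that lemma to $g_0$ to obtain $g \in \PP_D$ with $g_0^{-1}(-\gamma,\gamma)\cap B \subset Z_g^{(\beta)}$. I then choose $\delta$ small enough that $(1+C'_{k,D})\delta/c_{k,D} < \gamma$. For $x\in B$ with $|f(x)| < \delta M$ one has $|P(x)| \leq (1+C'_{k,D})\delta M$ by the Taylor bound, hence $|g_0(x)| \leq (1+C'_{k,D})\delta/c_{k,D} < \gamma$, and therefore $x\in Z_g^{(\beta)}$. This yields the asserted containment $f^{-1}(-\delta M,\delta M) \subset Z_g^{(\beta)}$; the containment $Z_f \subset f^{-1}(-\delta M,\delta M)$ is immediate. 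The \emph{in particular} statement follows by one more triangle-inequality step: any point within $\beta$ of $Z_f$ lies within $\beta$ of some $x \in Z_f \subset Z_g^{(\beta)}$, hence within $2\beta$ of $Z_g$.

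I expect the main technical point to be the bi-Lipschitz comparison between Taylor coefficients at $x_0$ and standard coefficients of $P$: this is what lets me realize $P/\lambda$ as an element of $\PP_D$ while retaining a lower bound on $\lambda$ of order $M$. Once that comparison is in hand, the remainder of the argument is bookkeeping of constants plus a direct appeal to Lemma \ref{lemmafgamma}.
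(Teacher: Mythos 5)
Your proof is correct and follows essentially the same route as the paper's: both take the degree-$D$ Taylor polynomial of $f$ at a point where $\|f\|_{\CC^D,B}$ is nearly attained, show the normalized polynomial lies in $\PP_D$ with normalizing constant comparable to $\|f\|_{\CC^D,B}$, and conclude via Lemma \ref{lemmafgamma}. The only (inessential) difference is how that comparability is justified — you use a change of basis between shifted and standard monomial coefficients, while the paper invokes compactness of $\PP_D$ to get $\|j\|_{\CC^D,B}\asymp_\times 1$; both are fine.
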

\begin{proof}
Fix $\delta > 0$ small to be determined, and let $f:B\to\R$ be as above. For convenience of notation, we without loss of generality assume that $\|f\|_{\CC^D,B} = 1$. By the definition of $\|f\|_{\CC^D,B}$, there exists a point $\zz\in B$ such that $\|f\|_{\CC^D,\zz} \geq 1/2$. Let $h_\zz$ denote the $D$th order Taylor polynomial for $f$ centered at $\zz$. Then
\begin{equation}
\label{hzB}
\|h_\zz\|_{\CC^D,B} \geq \|h_\zz\|_{\CC^D,\zz} = \|f\|_{\CC^D,\zz} \geq1/2\,.
\end{equation}
By Taylor's theorem, for all $\xx\in B$
\[
|f(\xx) - h_\zz(\xx)| \lesssim_\times \max_{|\alpha| = D + 1} \sup_B |f^{(\alpha)}| \cdot \|\xx - \zz\|^{D + 1} \lesssim_\times \delta\,,
\]
and so
\[
|h_\zz(\xx)|\lesssim_\times \delta \all \xx\in f^{-1}(-\delta,\delta)\,.
\]
Write $h_\zz = cj$ for some $c > 0$ and $j\in \PP_D$; then $\|j\|_{\CC^D,B} \asymp_\times 1$ since $\PP_D$ is compact. Combining with \eqref{hzB}, we see that $c \gtrsim_\times 1$, and thus
\begin{equation}
\label{jxbound}
|j(\xx)|\lesssim_\times \delta \all \xx\in f^{-1}(-\delta,\delta).
\end{equation}
Let $\gamma > 0$ be as in Lemma \ref{lemmafgamma}, and let $\delta$ be $\gamma$ divided by the implied constant of \eqref{jxbound}. Then
\[
f^{-1}(-\delta,\delta) \subset j^{-1}(-\gamma,\gamma).
\]
Moreover, by Lemma \ref{lemmafgamma} there exists $g\in \PP_D$ such that $j^{-1}(-\gamma,\gamma) \subset Z_g^{(\beta)}$. This completes the proof.
\end{proof}

\begin{lemma}
\label{lemmaapproximatelevelset2}
Fix $k,D\in\N$ and $\beta,C_1 > 0$. Then there exists $\epsilon > 0$ such that for any ball $B = B(\xx,\rho)\subset\R^k$ satisfying $\rho\leq\epsilon$ and for any $\CC^{D + 1}$ function $f:B\to\R$ satisfying
\[
\|f\|_{\CC^{D + 1},B} \leq C_1\|f\|_{\CC^D,B},
\]
there exists a polynomial $g:\R^k\to\R$ of degree at most $D$ such that $Z_f\subset Z_g^{(\beta\rho)}$, and thus $Z_f^{(\beta\rho)} \subset Z_g^{(2\beta\rho)}$.
\end{lemma}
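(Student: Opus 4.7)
The plan is to reduce Lemma \ref{lemmaapproximatelevelset2} to Lemma \ref{lemmaapproximatelevelset1} by rescaling $B = B(\xx,\rho)$ to the unit ball via the map $T_{\xx,\rho}(\ww) = \xx + \rho\ww$ of \eqref{Txrho}, and pulling $f$ back.

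First, set $\tilde f := f \circ T_{\xx,\rho} : B(\0,1) \to \R$. The chain rule gives $\tilde f^{(\alpha)}(\ww) = \rho^{|\alpha|} f^{(\alpha)}(\xx + \rho \ww)$ for every multi-index $\alpha\in\Neur^k$, so
\[
\sup_{B(\0,1)} |\tilde f^{(\alpha)}| \;=\; \rho^{|\alpha|} \sup_B |f^{(\alpha)}|.
\]
I would use this to verify the hypothesis of Lemma \ref{lemmaapproximatelevelset1} for $\tilde f$ once $\rho$ is small. For $|\alpha| = D + 1$, the assumption $\|f\|_{\CC^{D+1},B}\leq C_1\|f\|_{\CC^D,B}$ yields
\[
\sup_{B(\0,1)} |\tilde f^{(\alpha)}| \;=\; \rho^{D+1}\sup_B |f^{(\alpha)}| \;\leq\; C_1 \rho^{D+1} \|f\|_{\CC^D,B}.
\]
On the other hand, pick a multi-index $\alpha_0$ with $|\alpha_0|\leq D$ realising $\sup_B |f^{(\alpha_0)}| = \|f\|_{\CC^D,B}$. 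Assuming $\rho\leq 1$, we have $\rho^{|\alpha_0|}\geq \rho^D$, so
\[
\|\tilde f\|_{\CC^D,B(\0,1)} \;\geq\; \rho^{|\alpha_0|} \|f\|_{\CC^D,B} \;\geq\; \rho^D \|f\|_{\CC^D,B}.
\]
Combining the two bounds gives $\sup_{B(\0,1)} |\tilde f^{(\alpha)}| \leq C_1 \rho \,\|\tilde f\|_{\CC^D,B(\0,1)}$ for every $|\alpha| = D+1$.

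Now let $\delta' > 0$ be the constant supplied by Lemma \ref{lemmaapproximatelevelset1} applied with parameters $k$, $D$, and $\beta$, and set $\epsilon := \min(1,\,\delta'/C_1)$. For any $\rho\leq\epsilon$ the previous paragraph shows that $\tilde f$ satisfies the hypothesis of that lemma, producing some $\tilde g\in\PP_D$ such that $Z_{\tilde f}\subset Z_{\tilde g}^{(\beta)}$. Take $g := \tilde g\circ T_{\xx,\rho}^{-1}$, which is a polynomial of degree at most $D$ on $\R^k$. Since $T_{\xx,\rho}$ multiplies the max-norm distance by exactly $\rho$, applying $T_{\xx,\rho}$ to the containment $Z_{\tilde f}\subset Z_{\tilde g}^{(\beta)}$ gives
\[
Z_f \;=\; T_{\xx,\rho}(Z_{\tilde f}) \;\subset\; T_{\xx,\rho}\bigl(Z_{\tilde g}^{(\beta)}\bigr) \;=\; Z_g^{(\beta\rho)},
\]
and taking the $(\beta\rho)$-thickening of both sides yields the companion inclusion $Z_f^{(\beta\rho)} \subset Z_g^{(2\beta\rho)}$.

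The only real subtlety is the bookkeeping in the second step: condition \eqref{CDbounds} is stated on $B$, not on $B(\0,1)$, and the various derivatives of $\tilde f$ pick up different powers of $\rho$ under the rescaling, so the rescaled hypothesis need not be uniform in $\rho$. The key observation making the argument go through is the one-sided estimate $\|\tilde f\|_{\CC^D,B(\0,1)} \geq \rho^D\|f\|_{\CC^D,B}$ (valid for $\rho\leq 1$), which is all we need, since the rescaling penalises the top-order derivative by an additional factor of $\rho$ and this extra factor is what provides the smallness required by Lemma \ref{lemmaapproximatelevelset1}.
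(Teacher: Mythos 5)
Your proposal is correct and follows essentially the same route as the paper: rescale by $T_{\xx,\rho}$, observe that the top-order derivatives of $f\circ T_{\xx,\rho}$ gain an extra factor of $\rho$ relative to $\|f\circ T_{\xx,\rho}\|_{\CC^D,B(\0,1)}\geq\rho^D\|f\|_{\CC^D,B}$, apply Lemma \ref{lemmaapproximatelevelset1} for $\rho$ small, and pull the resulting polynomial back via $T_{\xx,\rho}^{-1}$. Your explicit choice $\epsilon=\min(1,\delta'/C_1)$ just makes precise the paper's ``for $\epsilon$ sufficiently small.''
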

\begin{proof}
Fix $0 < \epsilon \leq 1$ small to be determined, and let $B = B(\xx,\rho)$ and $f:B\to\R$ be as above. Let $T_{\xx,\rho}$ be given by \eqref{Txrho}, and let $\w f = f\circ T_{\xx,\rho}$. Then for all $\alpha\in\Neur^k$ with $|\alpha| = D + 1$,
\[
\sup_{B(\0,1)} |\w f^{(\alpha)}| = \rho^{D + 1} \sup_{B(\0,1)} |f^{(\alpha)}\circ T_{\xx,\rho}| \leq \rho^{D + 1}\|f\|_{\CC^{D + 1},B} \lesssim_\times \rho^{D + 1}\|f\|_{\CC^D,B},
\]
and on the other hand
\[
\|\w f\|_{\CC^D,B(\0,1)} = \max_{|\alpha| \leq D}\sup_{B(\0,1)} \w f^{(\alpha)} = \max_{|\alpha| \leq D} \rho^{|\alpha|} \sup_{B(\0,1)} |f^{(\alpha)}\circ T_{\xx,\rho}| \geq \rho^D \|f\|_{\CC^D,B}.
\]
Combining, we have
\[
\sup_{B(\0,1)} |\w f^{(\alpha)}| \lesssim_\times \rho\|\w f\|_{\CC^D,B(\0,1)} \all \alpha\in\Neur^k \text{ with }|\alpha| = D + 1.
\]
So for $\epsilon$ sufficiently small, $\w f$ satisfies the hypotheses of Lemma \ref{lemmaapproximatelevelset1}. Let $\w g$ be the polynomial given by Lemma \ref{lemmaapproximatelevelset1}, and let $g = \w g\circ T_{\xx,\rho}^{-1}$, so that $Z_g = T_{\xx,\rho}(Z_{\w g})$. This completes the proof.
\end{proof}

Let $k$, $D$, $\beta$, $C_1$, and $\epsilon$ be as above. Lemma \ref{lemmaapproximatelevelset2} gives us a way of translating a winning strategy for Alice in the $(\beta,D,C_1)$-levelset game into a winning strategy for Alice in the $(2\beta,D)$ algebraic-set game. Indeed, without loss of generality suppose that Bob's first move in the $(\beta,D,C_1)$-levelset game has radius $\leq\epsilon$. (Otherwise Alice makes dummy moves until this is true.) Now if Alice responds to Bob's move $B(\xx,\rho)$ in the $(\beta,D,C_1)$-levelset game by deleting the set $Z_f^{(\beta\rho)}$, then in the $(2\beta,D)$ algebraic-set game, she will simply delete the set $Z_g^{(2\beta\rho)}$, where $g$ is given by Lemma \ref{lemmaapproximatelevelset2}. Summarizing, we have the following:

\begin{corollary}
\label{corollaryLWimpliesAW}
Any levelset winning set is algebraic-set winning.
\end{corollary}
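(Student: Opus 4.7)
The plan is to convert, in a black-box fashion, a winning strategy for Alice in the levelset game into one for the algebraic-set game, using Lemma \ref{lemmaapproximatelevelset2} as the bridge. I will fix $D \in \N$ and $C_1 > 0$ so that $S$ is $(\beta, D, C_1)$-levelset winning for every $\beta > 0$; it will suffice to show that $S$ is $(2\beta, D)$ algebraic-set winning for each $\beta > 0$, since this immediately gives algebraic-set winning with the same $D$. Fix $\beta > 0$ and let $\epsilon = \epsilon(k, D, \beta, C_1) > 0$ be the constant produced by Lemma \ref{lemmaapproximatelevelset2}.

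Alice's play in the $(2\beta, D)$ algebraic-set game will proceed in two phases. In the first phase, while Bob's current radius $\rho_n$ exceeds $\epsilon$, Alice will repeatedly delete a hyperplane through the center of $B_n$; since a hyperplane is the zero set of a degree-one polynomial, this is a legal algebraic-set move, and it either forces $\rho_{n+1}$ to shrink by a definite geometric factor or leaves Bob with no legal response (in which case Alice wins trivially). After finitely many such turns $\rho_n \le \epsilon$ and Alice initiates phase two by opening a virtual instance of the $(\beta, D, C_1)$-levelset game whose initial Bob-move is the current ball $B_n$, using her hypothesized winning levelset strategy.

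In phase two, whenever the virtual strategy prescribes deleting $Z_f^{(\beta \rho_m)}$ for some admissible $\CC^{D+1}$ function $f \colon B_m \to \R$, Alice will apply Lemma \ref{lemmaapproximatelevelset2} to $f$ --- permissible since $\rho_m \le \rho_n \le \epsilon$ and \eqref{CDbounds} holds by the levelset rules --- to obtain a polynomial $g$ of degree at most $D$ with $Z_f^{(\beta \rho_m)} \subset Z_g^{(2 \beta \rho_m)}$, and in the real game she will delete $Z_g$. Any legal Bob-response $B_{m+1} \subset B_m \setminus Z_g^{(2\beta \rho_m)}$ in the real algebraic-set game is automatically disjoint from $Z_f^{(\beta \rho_m)}$, hence also a legal response in the virtual game; the two sequences of balls therefore coincide, and the virtual winning conclusion $\bigcap_m B_m \cap S \neq \emptyset$ transfers verbatim. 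The substantive input is Lemma \ref{lemmaapproximatelevelset2}; the game-theoretic bookkeeping will be essentially mechanical, with the only delicate point being the termination of phase one, which follows from the geometric contraction forced by centered hyperplanes.
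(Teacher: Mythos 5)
Your proposal is correct and is essentially the paper's own argument: translate Alice's $(\beta,D,C_1)$-levelset strategy into the $(2\beta,D)$ algebraic-set game by replacing each prescribed deletion $Z_f^{(\beta\rho)}$ with $Z_g^{(2\beta\rho)}$ for the polynomial $g$ supplied by Lemma \ref{lemmaapproximatelevelset2}, after first arranging that Bob's radius is at most the $\epsilon$ of that lemma. Your ``phase one'' of deleting hyperplanes through the center (forcing geometric shrinkage or an immediate win) is just a more explicit rendering of the paper's remark that Alice makes dummy moves until Bob's radius is $\leq\epsilon$, so the two arguments coincide in substance.
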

\begin{proof}
For each $k,D\in\N$ and $\beta,C_1 > 0$, then every $(\beta,D,C_1)$-levelset winning subset of $\R^k$ is $(2\beta,D)$ algebraic-set winning.
\end{proof}

Combining Corollaries \ref{corollaryAWimpliesHW} and \ref{corollaryLWimpliesAW} completes the proof of Theorem \ref{theoremwinningequivalence}.

\section{The simplex lemma and its consequences}
\label{sectionsimplex}

The paradigmatic example of a hyperplane winning set is the set
$
\BA_d
$  defined by the formula \equ{defbad},
which was proven to be hyperplane winning in \cite[Theorem 2.5]{BFKRW}, as a consequence of the so-called \emph{simplex lemma} \cite[Lemma 3.1]{BFKRW}. Essentially, the simplex lemma states that for each ball $B(\xx,\rho)\subset\R^d$, the set of rational points in $B(\xx,\rho)$ whose denominators are less than $\epsilon \rho^{-d/(d + 1)}$ is contained in an affine hyperplane, where $\epsilon > 0$ is small and depends only on $d$. As a result, when playing the hyperplane game Alice can simply delete the neighborhood of the hyperplane given by the simplex lemma, and it turns out that this strategy is winning for $\BA_d$. In this section we prove an analogue of the simplex lemma for rational points in a fixed manifold $M$. We then use the simplex lemma to prove two general negative results about intrinsic approximation on manifolds: that $\BA_M(\psi_{c(k,d)})$ is hyperplane winning, and that $\lambda_M(\VWA_M) = 0$.



\begin{lemma}[Simplex lemma for intrinsic approximation on manifolds]
\label{lemmasimplex}
Let $M\subset\R^d$ be a submanifold of dimension $k$, let $\Psi:U\to M$ be a local parameterization of $M$, and let $V\subset U$ be compact. Then there exists $\kappa > 0$ such that for all $\ss\in U$ and $0 < \rho\leq 1$, the set
\[
S_{\ss,\rho} := \{\pp/q\in\Q^d\cap \Psi\big(V\cap B(\ss,\rho)\big): q\leq \kappa \rho^{-1/c(k,d)}\}
\]
is contained in a hyperplane.
\end{lemma}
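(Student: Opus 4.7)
The plan is to argue by contradiction: assume $S_{\ss,\rho}$ is not contained in any hyperplane, so we may select $d+1$ affinely independent points $\rr_0 = \pp_0/q_0,\ldots,\rr_d = \pp_d/q_d \in S_{\ss,\rho}$, each with $q_i \leq Q := \kappa \rho^{-1/c(k,d)}$. The classical integer-determinant bound gives
\[
|\det(\rr_1-\rr_0,\ldots,\rr_d-\rr_0)| \;=\; \left|\det\begin{pmatrix} 1 & \rr_0 \\ \vdots & \vdots \\ 1 & \rr_d\end{pmatrix}\right| \;\geq\; \frac{1}{\prod_{i=0}^d q_i} \;\geq\; \frac{1}{Q^{d+1}},
\]
because scaling row $i$ by $q_i$ turns the right-hand determinant into a nonzero integer (nonzero by affine independence). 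The bulk of the proof is to match this with an upper bound $|\det(\rr_1-\rr_0,\ldots,\rr_d-\rr_0)| \lesssim \rho^{N_{k,d}}$; together these would force $Q \gtrsim \rho^{-N_{k,d}/(d+1)} = \rho^{-1/c(k,d)}$, contradicting our choice of $Q$ once $\kappa$ is taken smaller than the implicit constant.

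To obtain the upper bound, choose preimages $\tt_0,\ldots,\tt_d \in V \cap B(\ss,\rho)$ with $\Psi(\tt_i) = \rr_i$, write $\mathbf u_i := \tt_i - \tt_0$ (so $\|\mathbf u_i\| \leq 2\rho$), and Taylor-expand $\Psi$ at $\tt_0$ to a sufficiently high order $N$ (smoothness of $\Psi$ supplies the remainder estimate):
\[
\rr_i - \rr_0 \;=\; \sum_{1 \leq |\alpha| \leq N} \mathbf u_i^\alpha\, \mathbf w_\alpha \;+\; O(\rho^{N+1}), \qquad \mathbf w_\alpha := \tfrac{1}{\alpha!}\del^\alpha\Psi(\tt_0).
\]
This factors the $d\times d$ matrix $M := (\rr_1-\rr_0,\ldots,\rr_d-\rr_0)$ as $M = WU + E$, where $W$ has columns $\mathbf w_\alpha$, $U$ has entries $U_{\alpha,i} = \mathbf u_i^\alpha$, and $E$ is an error matrix with entries of size $O(\rho^{N+1})$. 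The Cauchy--Binet formula then expands
\[
\det(WU) \;=\; \sum_{A}\, \det(W_A)\,\det(U^A),
\]
the sum taken over $d$-element subsets $A$ of $\{\alpha\in\Neur^k : 1 \leq |\alpha| \leq N\}$. Since $|\det(U^A)| \leq d!(2\rho)^{|A|}$ for $|A| := \sum_{\alpha\in A}|\alpha|$, and the $\det(W_A)$ are bounded in terms of the relevant jet of $\Psi$ on the compact set $V$, we conclude $|\det(WU)| \lesssim \rho^{\min_A |A|}$.

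The combinatorial heart of the argument is the identification $\min_A |A| = N_{k,d}$: greedily filling in $d$ multi-indices of least weight, one takes all $[k-1,j]$ multi-indices of weight $j$ for $j=1,\ldots,n_{k,d}$ (totaling $d - m_{k,d}$ of them) and then $m_{k,d}$ further multi-indices of weight $n_{k,d}+1$; the total weight is precisely $N_{k,d}$ by Notation~\ref{notationkd}. A routine multilinearity estimate in the $E$-columns shows $|\det M - \det(WU)| = O(\rho^{d+N})$, which is absorbed into $O(\rho^{N_{k,d}})$ once $N \geq N_{k,d}-d$ (so that $\Psi$ need only be $C^{N_{k,d}+1}$-smooth). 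The main obstacle is precisely this combinatorial/volume identification: $N_{k,d}$ is the true order of vanishing of the simplex volume along the diagonal of $M^{d+1}$, and it is this identification that forces the exponent $c(k,d) = (d+1)/N_{k,d}$ in the simplex lemma, matching the ``volume computation'' alluded to in the discussion preceding Notation~\ref{notationkd}.
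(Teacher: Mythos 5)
Your proposal is correct, and it reaches the paper's conclusion by the same overall strategy — argue by contradiction, bound the simplex determinant below by $1/\prod_i q_i \geq \kappa^{-(d+1)}\rho^{N_{k,d}}$ via integrality, and above by a constant times $\rho^{N_{k,d}}$ — but your implementation of the upper bound is genuinely different from the paper's. The paper works with the single function $f(\ss_1,\ldots,\ss_{d+1})=\det[(1,\Psi(\ss_1))\;\cdots\;(1,\Psi(\ss_{d+1}))]$ on $U^{d+1}$, shows in Claim \ref{claimNkd} that every partial derivative of order less than $N_{k,d}$ vanishes along the diagonal (a nonvanishing derivative forces the $d+1$ multi-indices to be distinct, and minimizing the total weight of $d+1$ distinct multi-indices gives $N_{k,d}$), and then invokes Taylor's theorem together with compactness of $V$ to get $|f|\lesssim_\times \dist(\cdot,\Delta)^{N_{k,d}}$. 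You instead Taylor-expand $\Psi$ at one base point, factor the matrix of differences as $WU+E$, and apply Cauchy--Binet, so that $N_{k,d}$ appears as the minimal total weight of $d$ distinct \emph{nonzero} multi-indices; this is the same combinatorial minimization (in the paper one of the $d+1$ indices may be the zero multi-index, contributing weight $0$, so the two minima coincide), and your error analysis with $N\geq N_{k,d}-d$ is sound — in fact it shows $C^{N_{k,d}-d+1}$ smoothness suffices, slightly better than the $C^{N_{k,d}+1}$ your parenthetical claims and than what the paper's derivative-counting suggests. What the paper's formulation buys is that uniformity over $V$ comes for free from compactness of $V^{d+1}$; in your version the Taylor remainder at $\tt_0$ is only uniformly controlled when the points lie in a small convex neighborhood inside $U$, and you should add the (routine) remark that when some $\|\tt_i-\tt_0\|$ exceeds a fixed threshold depending on $\dist(V,\R^k\butnot U)$, then $\rho$ is bounded below and $|\det|\lesssim_\times 1\lesssim_\times\rho^{N_{k,d}}$ trivially — a one-sentence patch, not a gap, and of the same nature as the compactness step the paper itself leaves brief.
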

\begin{proof}
For all $\ss\in U$ let $\emb(\ss) = (1,\Psi(\ss))$. Define a function $f:U^{d + 1}\to\R$ by
\[
f(\ss_1,\ldots,\ss_{d + 1}) =
\det[\emb(\ss_1) \; \cdots \; \emb(\ss_{d + 1})].
\]
Then $f$ vanishes along the diagonal
\[
\Delta = \{(\tt,\ldots,\tt):\tt\in U\}.
\]
In fact, the first several derivatives of $f$ vanish along the diagonal, due to repeated columns:
\begin{claim}
\label{claimNkd}
The smallest order derivative of $f$ which does not vanish identically along the diagonal is no less than $N_{k,d}$.
\end{claim}
\begin{proof}
Suppose that for some sequence of multi-indices $\alpha_1,\ldots,\alpha_{d + 1}\in\Neur^k$, the expression
\begin{equation}
\label{alphas}
\frac{\del}{\del^{\alpha_1}\tt_1}\cdots\frac{\del}{\del^{\alpha_{d + 1}}\tt_{d + 1}}f(\tt_1,\ldots,\tt_{d + 1})
\end{equation}
does not vanish identically along the diagonal $\{\tt_1 = \cdots = \tt_{d + 1}\}$. Here we use the multi-index notation
\[
\frac{\del}{\del^{\alpha_i} \tt_i} = \left(\frac{\del}{\del t_{i,1}}\right)^{\alpha_{i,1}} \cdots \left(\frac{\del}{\del t_{i,k}}\right)^{\alpha_{i,k}}.
\]
Since the determinant of a matrix is linear with respect to the columns of that matrix, we have
\[
\frac{\del}{\del^{\alpha_1}\tt_1}\cdots\frac{\del}{\del^{\alpha_{d + 1}}\tt_{d + 1}}f(\tt_1,\ldots,\tt_{d + 1})
= \det[\del^{\alpha_1}\emb(\tt_1) \; \cdots \; \del^{\alpha_{d + 1}}\emb(\tt_{d + 1})].
\]
Since this does not vanish identically along the diagonal, there exists $\tt\in U$ such that
\[
\det[\del^{\alpha_1}\emb(\tt) \; \cdots \; \del^{\alpha_{d + 1}}\emb(\tt)] \neq 0.
\]
In particular, the rows $\left(\del^{\alpha_i}\emb(\tt)\right)_{i = 1}^{d + 1}$ are all distinct, so the multi-indices $\alpha_1,\ldots,\alpha_{d + 1}$ must be distinct. Thus for each $j\in\N$,
\begin{equation}
\label{njbounds}
n_j := \#\{i = 1,\ldots,d + 1:|\alpha_i| = j\} \leq \#\{\alpha\in\Neur^k:|\alpha| = j\} = [k - 1,j],
\end{equation}
and on the other hand,
\begin{equation}
\label{njsum}
\sum_{j = 0}^\infty n_j = d + 1.
\end{equation}
The order of the derivative \eqref{alphas} is
\[
\sum_{i = 1}^{d + 1}|\alpha_i| = \sum_{j = 1}^\infty j n_j,
\]
so computing the smallest order derivative of $f$ which potentially does not vanish along the diagonal becomes a combinatorial problem of minimizing $\sum_{j = 1}^\infty j n_j$ subject to \eqref{njbounds} and \eqref{njsum}. The reader will verify that the minimum is attained when
\[
n_j = \begin{cases}
[k - 1,j] & \text{if } j < n_{k,d} + 1\\
m_{k,d} & \text{if } j = n_{k,d} + 1\\
0 & \text{if } j > n_{k,d} + 1
\end{cases} \;\; (j \geq 0),
\]
and that the value of $\sum_{j = 1}^\infty j n_j$ on this sequence is $N_{k,d}$, where $n_{k,d}$, $m_{k,d}$, and $N_{k,d}$ are as in Notation \ref{notationkd}.
\QEDmod\end{proof}
Thus by Taylor's theorem and the compactness of $V$, we have
\begin{equation}
\label{taylor}
|f(\ss_1,\ldots,\ss_{d + 1})| \lesssim_\times \dist\left((\ss_i)_1^{d + 1},\Delta\right)^{N_{k,d}}
\end{equation}
for all $\ss_1,\ldots,\ss_{d + 1}\in V$.

Now by contradiction, suppose that the points $\rr_1,\ldots,\rr_{d + 1}\in S_{\ss,\rho}$ do not lie in a hyperplane. For each $i$ write $\rr_i = \Psi(\ss_i) = \pp_i/q_i$. Let
\[
D = f(\ss_1,\ldots,\ss_{d + 1}) = \det\left[\begin{array}{ccc}
1 & \cdots & 1\\
\rr_1 & \cdots & \rr_{d + 1}
\end{array}\right] \neq 0.
\]
Since $\ss_i\in B(\ss,\rho)$, we have $\dist\left((\ss_i)_1^{d + 1},\Delta\right) \lesssim_\times \rho$. Thus by \eqref{taylor} we have
\begin{equation}
\label{taylor2}
|D|\lesssim_\times \rho^{N_{k,d}}.
\end{equation}
On the other hand, we have
\[
D = \prod_{i = 1}^{d + 1} \frac{1}{q_i} \det\left[\begin{array}{ccc}
q_1 & \cdots & q_{d + 1}\\
\pp_1 & \cdots & \pp_{d + 1}
\end{array}\right] \in \prod_{i = 1}^{d + 1} \frac{1}{q_i} \Z.
\]
Thus,
\[
|D| \geq \frac{1}{\prod_{i = 1}^{d + 1} q_i}\cdot
\]
Since by assumption $q_i\leq \kappa \rho^{-N_{k,d}/(d + 1)}$, we have
\[
|D| \geq \kappa^{-(d + 1)}\rho^{N_{k,d}}.
\]
For $\kappa > 0$ sufficiently small, this contradicts \eqref{taylor2}.
\end{proof}

Using the simplex lemma, we proceed to prove two results about intrinsic approximation on $M$. The first is the following:

\begin{theorem}
\label{theoremBAwinning}
Let $M\subset\R^d$ be a submanifold of dimension $k$, and let $c(k,d)$ be as in Notation \ref{notationkd}. Suppose that for some $D\in\N$, every point of $M$ is $D$-nondegenerate. Then $\BA_M(\psi_{c(k,d)})$ is hyperplane winning relative to $M$.
\end{theorem}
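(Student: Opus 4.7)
Fix a local parameterization $\Psi:U\to M$ and a compact set $K\subset U$; we must show that the set $\Psi^{-1}(\BA_M(\psi_{c(k,d)}))\cup(\R^k\smallsetminus K)$ is hyperplane winning. By Theorem \ref{theoremwinningequivalence}, it suffices to show this set is levelset winning for some $D\in\N$ and $C_1 > 0$. The plan is to take $D$ equal to the uniform nondegeneracy order given in the hypothesis, and to have Alice use the intrinsic simplex lemma (Lemma \ref{lemmasimplex}) at each scale to find a hyperplane in $\R^d$ whose preimage under $\Psi$ is a levelset that she can delete. Shrinking $U$ if necessary, I will assume $\Psi$ is a bi-Lipschitz embedding and pick a compact $V$ with $K\subset V^\circ\subset V\subset U$ that contains a sufficiently wide neighborhood of $K$; I will also absorb Bob's initial moves by dummy responses until either Bob's ball $B_n=B(\ss_n,\rho_n)$ satisfies $B(\ss_n,2\rho_n)\subset V$ and $\rho_n\le 1$, or Bob leaves $K$ (in which case the limit point lies in $\R^k\smallsetminus K$ and we are done).

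\textbf{Alice's strategy.} Given Bob's move $B_n=B(\ss_n,\rho_n)$, apply Lemma \ref{lemmasimplex} to the ball $B(\ss_n,2\rho_n)$: the set of $\rr=\pp/q\in\Q^d\cap\Psi(V\cap B(\ss_n,2\rho_n))$ with $q\le\kappa(2\rho_n)^{-1/c(k,d)}$ lies in a single affine hyperplane $L_n\subset\R^d$ (if there are $\le d$ such points, take any hyperplane through them; if none, Alice plays an arbitrary dummy hyperplane). Write $L_n=\{\yy\in\R^d:\aa_n\cdot\yy=b_n\}$ with $\|\aa_n\|=1$ and define $f_n:B_n\to\R$ by $f_n(\ss)=\aa_n\cdot\Psi(\ss)-b_n$. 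Alice deletes the zero set $Z_{f_n}$ in the levelset game, so after Bob's next legal move $B_{n+1}\subset B_n\smallsetminus Z_{f_n}^{(\beta\rho_n)}$.

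\textbf{Checking the levelset bound.} We must verify the condition $\|f_n\|_{\CC^{D+1},B_n}\le C_1\|f_n\|_{\CC^D,B_n}$ with a uniform $C_1$. The upper bound $\|f_n\|_{\CC^{D+1},B_n}\le C$ follows from smoothness of $\Psi$ on the compact set $V$ together with $\|\aa_n\|=1$. For the lower bound on $\|f_n\|_{\CC^D,B_n}$, observe that for every $\ss\in V$ the $D$-nondegeneracy at $\Psi(\ss)$ means $\{\del^\alpha\Psi(\ss):0<|\alpha|\le D\}$ spans $\R^d$, so the map sending $(\ss,\aa)\in V\times S^{d-1}$ to $\max_{0<|\alpha|\le D}|\aa\cdot\del^\alpha\Psi(\ss)|$ is a continuous strictly positive function on a compact set and hence bounded below by some $c>0$. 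This yields $\|f_n\|_{\CC^D,\ss}\ge c$ pointwise, hence $\|f_n\|_{\CC^D,B_n}\ge c$, giving $C_1=C/c$. This is the step where the uniform $D$-nondegeneracy hypothesis gets used essentially.

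\textbf{Checking the winning outcome.} By Remark \ref{remarksingleton} we may assume Bob's balls collapse to a single point $\xx$. Either $\xx\in\R^k\smallsetminus K$ and we are done, or $\xx\in K$, in which case I must show $\Psi(\xx)\in\BA_M(\psi_{c(k,d)})$. Fix $\rr=\pp/q\in\Q^d\cap M$. If $\rr\notin\Psi(V)$, then $\|\Psi(\xx)-\rr\|$ is bounded below by a positive constant depending only on $\xx$, absorbing into $\epsilon_\xx$. Otherwise $\ss_\rr:=\Psi^{-1}(\rr)\in V$ and, since $\Psi$ is bi-Lipschitz on $V$, $\|\Psi(\xx)-\rr\|\asymp_\times\|\xx-\ss_\rr\|$. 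Pick $n$ minimal with $\kappa(2\rho_n)^{-1/c(k,d)}\ge q$, so that $\rho_n\asymp_\times q^{-c(k,d)}$ (using $\rho_{n+1}\ge\beta\rho_n$ to bound $\rho_n$ from below by a constant multiple of $q^{-c(k,d)}$). If $\ss_\rr\in B(\ss_n,2\rho_n)$, then by construction $\ss_\rr\in Z_{f_n}$, and since $\xx\in B_{n+1}$ we get $\|\xx-\ss_\rr\|\ge\beta\rho_n\gtrsim_\times q^{-c(k,d)}$. If $\ss_\rr\notin B(\ss_n,2\rho_n)$, then $\|\xx-\ss_\rr\|\ge\|\ss_n-\ss_\rr\|-\|\ss_n-\xx\|\ge 2\rho_n-\rho_n=\rho_n\gtrsim_\times q^{-c(k,d)}$. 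Either way $\|\Psi(\xx)-\rr\|\gtrsim_\times q^{-c(k,d)}$, completing the proof.

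\textbf{Main obstacle.} The conceptually hard part is establishing the levelset bound in paragraph three: showing that the smooth functions $f_n=\aa_n\cdot\Psi-b_n$ generated by Alice's strategy have uniformly comparable $\CC^D$ and $\CC^{D+1}$ norms. This is precisely where uniform $D$-nondegeneracy enters, and it is the reason why we work in the levelset variant of the hyperplane game rather than the hyperplane game directly. The bi-Lipschitz bookkeeping and the case distinction on whether $\ss_\rr$ lies in Bob's scale-$2\rho_n$ ball are the other technical points to watch, but neither is essentially different from the standard Schmidt-game template.
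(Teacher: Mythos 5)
Your proof is correct and follows essentially the same route as the paper's: reduce to the levelset game via Theorem \ref{theoremwinningequivalence}, have Alice delete the $\Psi$-preimage of the hyperplane supplied by Lemma \ref{lemmasimplex} at scale $2\rho_n$, verify the $C_1$-comparison of norms by compactness together with the uniform $D$-nondegeneracy hypothesis, and conclude via Remark \ref{remarksingleton} with the same case analysis on whether $\Psi^{-1}(\rr)$ lies in $B(\ss_n,2\rho_n)$. The only cosmetic difference is that you normalize the affine functional by a unit normal in $\R^d$ and use derivatives of order at least one, whereas the paper normalizes $\ww\in S^d$ acting on $\emb=(1,\Psi)$; both yield the required uniform bounds.
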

\begin{proof}
Let $\Psi:U\to M$ be a local parameterization of $M$, and let $K\subset U$ be compact. We need to show that the set
\begin{equation}
\label{BAMK}
\Psi^{-1}\big(\BA_M(\psi_{c(k,d)})\big)\cup (\R^k\butnot K)
\end{equation}
is hyperplane winning.

Fix $C_1 > 0$ large to be determined, and let $\beta > 0$. We will show that the set \eqref{BAMK} is $(\beta,D,C_1)$-levelset winning, where $D$ is as in the statement of Theorem \ref{theoremBAwinning}. Let $\lambda = \beta^{-1/c(k,d)}$ (so that $\lambda > 1$). Denote Bob's first move by $B_0 = B(\ss_0,\rho_0)\subset \R^k$. Fix an open set $V\supset K$ which is relatively compact in $U$; without loss of generality we may assume that $B(\ss_0,2\rho_0)\subset V$, since Alice may make dummy moves until either this is true or Bob's ball is disjoint from $K$. Now Alice's strategy is as follows: If Bob has just made his $n$th move $B_n = B(\ss_n,\rho_n) \subset V$, then Alice will delete the $\beta\rho_n$-neighborhood of the set $\Psi^{-1}(\LL_n)$, where $\LL_n$ is the affine hyperplane containing the set $S_{\ss_n,2\rho_n}$. To complete the proof we need to show (i) that this is legal (given $C_1 > 0$ large enough), and (ii) that the strategy guarantees that $\bigcap_1^\infty B_n \cap \Psi^{-1}\big(\BA_M(\psi_{c(k,d)})\big) \neq \emptyset$.

\begin{itemize}
\item[(i)] For each $\ss\in U$ let $\emb(\ss) = (1,\Psi(\ss))$. Since every point of $M$ is $D$-nondegenerate, for each $\ss\in U$ and $\ww\in \R^{d + 1}\butnot\{\0\}$ we have
\[
\|\tt\mapsto \ww\cdot\emb(\tt)\|_{\CC^D,\ss} > 0,
\]
and by continuity, this quantity is bounded from below uniformly for $\ss\in V$ and $\ww\in S^d$.

Now consider Alice's $n$th move. Write $\LL_n = \{\xx\in\R^d: \ww\cdot (1,\xx) = 0\}$ for some $\ww\in S^d$. Define $f:U\to\R$ by
\[
f(\ss) = \ww\cdot\emb(\ss),
\]
so that $Z_f = \Psi^{-1}(\LL_n)$. Then by the first paragraph, $\|f\|_{\CC^D,B_n} \geq \|f\|_{\CC^D,\ss_n}$ is bounded from below. On the other hand,
\[
\|f\|_{\CC^{D + 1},B_n} \leq \|\ww\|\cdot\|\emb\|_{\CC^{D + 1},B_n} \leq \|\emb\|_{\CC^{D + 1},V} \asymp_\times 1,
\]
so $\|f\|_{\CC^{D + 1},B_n}\lesssim_\times \|f\|_{\CC^D,B_n}$. Letting $C_1$ be the implied constant finishes the proof.

\item[(ii)] By Remark \ref{remarksingleton}, we may assume that $\bigcap_1^\infty B_n$ is a singleton, say $\bigcap_1^\infty B_n = \{\ss\}$. For each $\rr = \pp/q \in\Q^d\cap M$, let $n\in\N$ be minimal such that $q\leq \kappa\rho_n^{-N_{k,d}/(d + 1)}$. If $\rr\in \Psi\big(V\cap B(\ss_n,2\rho_n)\big)$, then by Lemma \ref{lemmasimplex} we have $\rr\in\LL_n$. Since Alice deleted the set $\Psi^{-1}(\LL_n)^{(\beta\rho_n)}$, we have $\ss\notin \Psi^{-1}(\LL_n)^{(\beta\rho_n)}$ and thus
\[
\dist(\Psi(\ss),\rr) \asymp_\times \|\ss - \Psi^{-1}(\rr)\| \geq \beta\rho_n \asymp_{\times,\beta,\kappa} q^{-(d + 1)/N_{k,d}}.
\]
On the other hand, if $\rr\notin \Psi\big(V\cap B(\ss_n,2\rho_n)\big)$, then either $\rr\notin\Psi(V)$, which implies
\[
\dist(\Psi(\ss),\rr) \geq \dist\big(\Psi(\ss),M\butnot \Psi(V)\big) \asymp_\times 1,
\]
or $\rr\in\Psi\big(V\butnot B(\ss_n,2\rho_n)\big)$, in which case
\[
\dist(\Psi(\ss),\rr) \asymp_\times \|\ss - \Psi^{-1}(\rr)\| \geq \rho_n \asymp_{\times,\beta,\kappa} q^{-(d + 1)/N_{k,d}}.
\]
In all cases we have $\dist(\Psi(\ss),\rr) \gtrsim_\times \psi_{c(k,d)}(q)$, so $\Psi(\ss)\in\BA_M(\psi_{c(k,d)})$.
\end{itemize}
\end{proof}

To state our last theorem regarding general manifolds, we need a definition:


\begin{definition}
\label{definitionAF}
A measure $\mu$ on an open set $U\subset\R^k$ is \emph{absolutely decaying} if there exist $C,\alpha > 0$ such that for all $\xx\in\Supp(\mu)$, for all $0 < \rho\leq 1$ such that $B(\xx,\rho)\subset U$, for all $\epsilon > 0$, and for every affine hyperplane $\LL\subset\R^k$, we have
\[
\mu\left(\LL^{(\epsilon\rho)}\cap B(\xx,\rho)\right) \leq C \epsilon^\alpha\mu\big(B(\xx,\rho)\big).
\]
$\mu$ is called \emph{doubling} if $\mu\big(B(\xx,2\rho)\big) \asymp_\times \mu\big(B(\xx,\rho)\big)$ for all $\xx\in\Supp(\mu)$ and $0 < \rho \leq 1$. If $\mu$ is both absolutely decaying and doubling, then $\mu$ is called \emph{absolutely friendly}.
\end{definition}


\begin{theorem}
\label{theoremextremal}
Let $M\subset\R^d$ be a submanifold of dimension $k$, and fix $D\in\N$. Let $\Psi:U\to M$ be a local parameterization of $M$, let $\mu$ be an absolutely friendly measure on $U$, and let $\nu = \Psi[\mu]$. If $\nu$-a.e.\ point of $M$ is $D$-nondegenerate, then $\VWA_M(\psi_{c(k,d)})$ is a $\nu$-nullset. In particular, $\lambda_M\big(\VWA_M(\psi_{c(k,d)})\big) = 0$.
\end{theorem}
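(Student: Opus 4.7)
The plan is to prove Theorem~\ref{theoremextremal} by a Borel--Cantelli argument combined with the intrinsic Simplex Lemma (Lemma~\ref{lemmasimplex}). Set $c := c(k,d)$. By \equ{defvwa}, $\VWA_M(\psi_c) = \bigcup_{\ell\in\N} A_\ell$, where $A_\ell := \{\xx\in M : \exists\,\rr_n\in\Q^d\cap M,\,\rr_n\to\xx,\,\dist(\xx,\rr_n) \leq \psi_{c + 1/\ell}(H(\rr_n))\}$. Thus it is enough to fix $\epsilon > 0$, write $A_\epsilon$ for the corresponding set, and prove $\nu(A_\epsilon) = 0$. Pulling back via $\Psi$ and exhausting $U$ by compact subsets $V$, the task reduces to showing that for any ball $B_0 = B(\ss_0,\rho_0) \subset V$ centered at a $D$-nondegenerate point $\ss_0 \in \Supp(\mu)$, one has $\mu(\Psi^{-1}(A_\epsilon)\cap B_0) = 0$.

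For each $n\in\N$ define
\[
E_n := \big\{\ss\in B_0 : \exists\,\rr \in\Q^d\cap M,\ H(\rr)\in[2^n,2^{n+1}),\ \|\ss - \Psi^{-1}(\rr)\|\leq L\cdot 2^{-n(c+\epsilon)}\big\},
\]
where $L$ is a Lipschitz constant for $\Psi^{-1}$ on $V$; then $\Psi^{-1}(A_\epsilon)\cap B_0 \subset \limsup_n E_n$, and by Borel--Cantelli it suffices to show $\sum_n \mu(E_n) < \infty$. Choose $\rho_n \asymp_\times 2^{-nc}$ so that $\kappa(2\rho_n)^{-1/c} \geq 2^{n+1}$, where $\kappa$ comes from Lemma~\ref{lemmasimplex}, and cover $B_0 \cap \Supp(\mu)$ by a bounded--overlap family $\{B(\ss_i,\rho_n)\}$ with $\ss_i \in B_0 \cap \Supp(\mu)$. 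For $n$ large, $L\cdot 2^{-n(c+\epsilon)} \leq \rho_n/2$, so any witness $\ss_\rr$ for $\ss\in E_n\cap B(\ss_i,\rho_n)$ lies in $B(\ss_i,2\rho_n)$, and Lemma~\ref{lemmasimplex} yields an affine hyperplane $\LL_i^{(n)}\subset\R^d$ containing $\Psi(\ss_\rr)$. Consequently
\[
E_n \cap B(\ss_i,\rho_n) \subset \big(\Psi^{-1}(\LL_i^{(n)})\big)^{(L\cdot 2^{-n(c+\epsilon)})}.
\]

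To extract a measure bound I linearize: for a unit normal $\ww$ to $\LL_i^{(n)}$, the function $f(\ss) := \ww\cdot(1,\Psi(\ss))$ satisfies $Z_f = \Psi^{-1}(\LL_i^{(n)})$, and $D$-nondegeneracy together with compactness of $V$ yields $\|f\|_{\CC^{D+1},B(\ss_i,2\rho_n)} \lesssim_\times \|f\|_{\CC^D,B(\ss_i,2\rho_n)}$, exactly as in part (i) of the proof of Theorem~\ref{theoremBAwinning}. Lemma~\ref{lemmaapproximatelevelset2} then approximates $Z_f$ at scale $\rho_n$ by the zero set of a polynomial of degree at most $D$. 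Extending absolute friendliness from hyperplanes to such bounded--degree algebraic varieties (cf.\ \cite{KLW}) gives
\[
\mu(E_n\cap B(\ss_i,\rho_n)) \lesssim_\times \left(\frac{2^{-n(c+\epsilon)}}{\rho_n}\right)^{\alpha} \mu(B(\ss_i,\rho_n)) \lesssim_\times 2^{-n\epsilon\alpha}\,\mu(B(\ss_i,\rho_n))
\]
for some $\alpha > 0$. Summing over the bounded--overlap cover and using doubling gives $\mu(E_n) \lesssim_\times 2^{-n\epsilon\alpha}\mu(B_0)$, which is summable; Borel--Cantelli finishes the proof.

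The main technical obstacle is the last measure estimate. Absolute friendliness in Definition~\ref{definitionAF} controls $\mu$ near affine hyperplanes in the parameter space $\R^k$, whereas the simplex lemma naturally produces a hyperplane in the ambient $\R^d$ whose preimage under $\Psi$ is a curvilinear subvariety. Reducing the latter to the former amounts either to invoking the nondegenerate--pushforward theorem of \cite{KLW}, which makes $\nu = \Psi[\mu]$ friendly on $\R^d$, or to strengthening Definition~\ref{definitionAF} directly, using Lemma~\ref{lemmaapproximatelevelset2} to pass from zero sets of bounded--degree polynomials back to affine hyperplanes at the cost of a bounded multiplicative constant. Modulo this input, the remaining ingredients are standard.
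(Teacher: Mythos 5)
Your overall architecture (simplex lemma at scales $\rho_n$ comparable to $H^{-c}$, a bounded--multiplicity cover, Borel--Cantelli) is the same as the paper's, and your choice of thickening $2^{-n(c+\epsilon)}=\rho_n^{1+\epsilon/c}$ matches the paper's $\rho_n^{1+\gamma}$. But the step you yourself flag as ``the main technical obstacle'' is precisely the non-routine part of the theorem, and neither of the two routes you propose closes it as stated. Route (a): the pushforward of an absolutely friendly measure under a nondegenerate embedding is \emph{friendly} but not \emph{absolutely} friendly --- the decay one gets for a hyperplane $\LL$ is measured relative to $\sup_B\dist(\cdot,\LL)$ (equivalently relative to $\sup_B|f|$ for $f(\ss)=\ww\cdot(1,\Psi(\ss))$), not relative to the radius $\rho_n$; indeed absolute decay genuinely fails for a measure supported on a curved hypersurface (test a tangent hyperplane). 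So it does not yield your displayed bound with the ratio $\bigl(2^{-n(c+\epsilon)}/\rho_n\bigr)^{\alpha}$. Route (b): Lemma \ref{lemmaapproximatelevelset2} approximates $Z_f$ by a polynomial zero set only up to an error $\beta\rho_n$ comparable to the ball radius; replacing $\Psi^{-1}(\LL_i^{(n)})$ by $Z_g$ therefore inflates the width from $\rho_n^{1+\epsilon/c}$ to roughly $\beta\rho_n$, and the resulting bound $\lesssim_\times\beta^{\alpha}\mu(B)$ with $\beta$ fixed is not summable in $n$; moreover ``absolute decay for bounded-degree algebraic varieties'' is not part of Definition \ref{definitionAF} and would itself need proof.

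What the paper actually does at this point (Claim \ref{claimgammaalpha}) is different in a crucial way: it converts the $\rho_n^{1+\gamma}$-neighborhood of $Z_f$ inside $B$ into the \emph{sublevel set} $f^{-1}\bigl(-\rho_n^{\gamma/2}\sup_B|f|,\rho_n^{\gamma/2}\sup_B|f|\bigr)$ via a Taylor estimate together with the Remez-type inequality \eqref{ETSKLW3}, namely that the $\CC^D$-norm of the rescaled $f$ is controlled by its sup on the unit ball; this inequality is where Lemmas \ref{lemmaapproximatelevelset1} and \ref{lemmaapproximatelevelset2} enter (with $\beta$ chosen so that $B(\0,1)\not\subset Z_g^{(2\beta)}$ for every polynomial $g$ of degree at most $D$). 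It then applies \cite[Proposition 7.3]{KLW} (inequality \eqref{KLW}), which is exactly the statement that an absolutely friendly $\mu$ decays on sublevel sets of $P\circ\emb$ measured relative to $\sup_B|P\circ\emb|$ when $\emb$ is nondegenerate $\mu$-a.e. This conversion-plus-KLW argument is the content your proposal defers, so as written there is a genuine gap at the decisive estimate. A smaller point: the bound $\|f\|_{\CC^{D+1},B}\lesssim_\times\|f\|_{\CC^D,B}$ that you import from part (i) of the proof of Theorem \ref{theoremBAwinning} was established there under the hypothesis that \emph{every} point is $D$-nondegenerate, whereas here nondegeneracy is only assumed $\nu$-almost everywhere, so that step also needs an extra word (e.g.\ density of $D$-nondegenerate points in the support of $\mu$).
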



\begin{proof}
Let $\kappa > 0$ be as in Lemma \ref{lemmasimplex}. Fix $\lambda > 1$ arbitrary, and for each $n\in\N$ let
\begin{align*}
\numberq_n &:= \kappa \lambda^{n/c(k,d)}\\
\rho_n &:= \frac12\lambda^{-n}.
\end{align*}
Let $K\subset U$ be a compact set, and let $V\supset K$ be open and relatively compact in $U$. Then by Lemma \ref{lemmasimplex}, we have the following:
\begin{corollary} \label{corollarysimplex}
For all $\ss\in V$ and for all $n\in\N$, the set
\[
S_{n,\ss} = \left\{\pp/q\in \Q^d\cap \Psi\big(V\cap B(\ss,2\rho_n)\big): q\leq \numberq_n\right\}
\]
is contained in a hyperplane.
\end{corollary}
Denote the hyperplane guaranteed by Corollary \ref{corollarysimplex} by $\LL_{n,\ss}$. For each $n\in\N$, let $(\ss_i^{(n)})_{i = 1}^{N_n}$ be a maximal $\rho_n$-separated subset of $K$. Then $\{B(\ss_i^{(n)},\rho_n) : i = 1,\ldots,N_n\}$ is a cover of $K$ whose multiplicity is bounded depending only on $d$. For each $i = 1,\ldots,N_n$, let $\LL_{n,i} = \LL_{n,\ss_i^{(n)}}$.
\begin{claim}
\label{claimSbound}
\[
\Psi^{-1}\big(\VWA_M(\psi_{c(k,d)})\big)\cap K \subset \lim_{\gamma\to 0} \limsup_{n\to\infty}\bigcup_{i = 1}^{N_n}\left[\left(\Psi^{-1}(\LL_{n,i})\right)^{(\rho_n^{1 + \gamma})}\cap B(\ss_i^{(n)},\rho_n)\right].
\]
\end{claim}
\begin{subproof}
Fix $\ss \in \Psi^{-1}\big(\VWA_M(\psi_{c(k,d)})\big)\cap K$, and recall the definition of the set $\VWA_M(\psi_{c(k,d)})$ given by \equ{defvwa} and \equ{defvwam}. Since $\Psi(\ss)\in\VWA_M(\psi_{c(k,d)})$, there exists $\epsilon > 0$ such that there are infinitely many $\rr = \pp/q\in\Q^d\cap M$ satisfying
\begin{equation}
\label{pq}
\|\Psi(\ss) - \rr\| \leq q^{-(c(k,d) + \epsilon)}.
\end{equation}
Fix such an $\rr$, and let $n\in\N$ satisfy $\numberq_{n - 1}\leq q < \numberq_n$. Then
\[
\|\ss - \Psi^{-1}(\rr)\| \asymp_\times \|\Psi(\ss) - \rr\| \leq \numberq_{n - 1}^{-(c(k,d) + \epsilon)}
\asymp_\times \rho_n^{1 + \epsilon/c(k,d)}.
\]
Fix $0 < \gamma < \epsilon/c(k,d)$. If $n$ is sufficiently large, then we have
\[
\|\ss - \Psi^{-1}(\rr)\| \leq \rho_n^{1 + \gamma} \leq \rho_n.
\]
On the other hand, since $\ss\in K$, we have $\ss\in B(\ss_i^{(n)},\rho_n)$ for some $i = 1,\ldots,N_n$. It follows that $\rr\in \Psi\big(V\cap B(\ss_i^{(n)},2\rho_n)\big)$, and so by Corollary \ref{corollarysimplex} we have $\rr\in\LL_{n,i}$. Thus
\[
\ss \in \left(\Psi^{-1}(\LL_{n,i})\right)^{(\rho_n^{1 + \gamma})}\cap B(\ss_i^{(n)},\rho_n).
\]
Since this argument holds for all $\rr$ satisfying (\ref{pq}), it follows that
\[
\ss \in \bigcup_{i = 1}^{N_n}\left[\left(\Psi^{-1}(\LL_{n,i})\right)^{(\rho_n^{1 + \gamma})}\cap B(\ss_i^{(n)},\rho_n)\right]
\]
for infinitely many $n\in\N$.
\end{subproof}
\begin{claim}
\label{claimgammaalpha}
For each $\gamma > 0$, there exists $\alpha > 0$ such that for all $n\in\N$ and $i = 1,\ldots,N_n$,
\begin{equation}
\label{mubound}
\mu\left(\Psi^{-1}(\LL_{n,i})^{(\rho_n^{1 + \gamma})}\cap B(\ss_i^{(n)},\rho_n)\right)
\lesssim_\times \rho_n^\alpha \mu\big(B(\ss_i^{(n)},\rho_n)\big).
\end{equation}
\end{claim}
\begin{subproof}
For each $\ss\in U$ let $\emb(\ss) = (1,\Psi(\ss))$. Let $B = B(\ss_i^{(n)},\rho_n)$. By \cite[Proposition 7.3]{KLW}, there exist $C,\alpha > 0$ such that for any linear map $P:\R^{d + 1}\to\R$, if $f = P\circ \emb$, then
\begin{equation}
\label{KLW}
\mu\left(f^{-1}\left(-\rho_n^{\gamma/2}\sup_B|f|,\rho_n^{\gamma/2}\sup_B|f|\right)\cap B\right) \leq \rho_n^\alpha \mu(B).
\end{equation}
On the other hand, if $P$ is the linear functional whose zero set is the hyperplane $\LL_{n,i}$, then
\[
\Psi^{-1}(\LL_{n,i}) = Z_f.
\]
So to complete the proof, we must show that
\begin{equation}
\label{ETSKLW}
Z_f^{(\rho_n^{1 + \gamma})}\cap B \subset f^{-1}\left(-\rho_n^{\gamma/2}\sup_B|f|,\rho_n^{\gamma/2}\sup_B|f|\right).
\end{equation}
Let $T = T_{\ss_i^{(n)},\rho_n}$ be as in \eqref{Txrho}, and let $\w f = f\circ T$. Translating \eqref{ETSKLW} via $T$ gives
\begin{equation}
\label{ETSKLW2}
Z_{\w f}^{(\rho_n^\gamma)}\cap B(\0,1) \subset \w f^{-1}\left(-\rho_n^{\gamma/2}\sup_{B(\0,1)}|\w f|,\rho_n^{\gamma/2}\sup_{B(\0,1)}|\w f|\right).
\end{equation}
To demonstrate \eqref{ETSKLW2}, we observe that if $\ss\in Z_{\w f}^{(\rho_n^\gamma)}\cap B(\0,1)$, then there exists $\tt\in Z_{\w f}$ for which $\|\ss - \tt\| \leq \rho_n^\gamma$. By Taylor's theorem, we have
\[
|\w f(\ss)| \lesssim_\times \|\w f\|_{\CC^1,B(\0,2)} \|\ss - \tt\| \leq \|\w f\|_{\CC^D,B(\0,2)} \rho_n^\gamma.
\]
So to complete the proof, we must show that
\begin{equation}
\label{ETSKLW3}
\|\w f\|_{\CC^D,B(\0,2)} \lesssim_\times \sup_{B(\0,1)}|\w f|.
\end{equation}
To demonstrate \eqref{ETSKLW3}, let $\beta > 0$ be small enough so that for every polynomial $g$ of degree at most $D$, $B(\0,1)\nsubset Z_g^{(2\beta)}$. Such a $\beta$ exists e.g.\ by a compactness argument. Let $\delta > 0$ be given by Lemma \ref{lemmaapproximatelevelset1}. For $n$ sufficiently large, the argument of Lemma \ref{lemmaapproximatelevelset2} shows that the hypotheses of Lemma \ref{lemmaapproximatelevelset1} are satisfied for $\w f$, and thus that
\[
\w f^{-1}(-\delta \|\w f\|_{\CC^D,B(\0,2)},\delta \|\w f\|_{\CC^D,B(\0,2)}) \subset Z_g^{(2\beta)} \propersubset B(\0,1).
\]
Thus there exists $\ss\in B(\0,1)$ for which $|\w f(\ss)| \geq \delta \|\w f\|_{\CC^D,B(\0,2)}$, demonstrating \eqref{ETSKLW3}.
\end{subproof}

Fix $\gamma,\alpha$ as in Claim \ref{claimgammaalpha}. From \eqref{mubound}, we see that
\begin{align*}
\sum_{i = 1}^{N_n}\mu\left(\big(\Psi^{-1}(\LL_{n,i})\big)^{(\rho_n^{1 + \gamma})}\cap B(\ss_i^{(n)},\rho_n)\right)
&\lesssim_\times \rho_n^{\alpha}\sum_{i = 1}^{N_n}\mu\big(B(\ss_i^{(n)},\rho_n)\big)
\lesssim_\times \rho_n^{\alpha}\mu(V)\\
\sum_{n = 0}^\infty\sum_{i = 1}^{N_n}\mu\left(\big(\Psi^{-1}(\LL_{n,i})\big)^{(\rho_n^{1 + \gamma})}\cap B(\ss_i^{(n)},\rho_n)\right)
&\lesssim_\times \sum_{n = 0}^\infty \lambda^{-\alpha n} < \infty.
\end{align*}
Thus by the Borel--Cantelli lemma, for each $\gamma > 0$
\[
\mu\left(\limsup_{n\to\infty}\bigcup_{i = 1}^{N_n}\left[\big(\Psi^{-1}(\LL_{n,i})\big)^{(\rho_n^{1 + \gamma})}\cap B(\ss_i^{(n)},\rho_n)\right]\right) = 0,
\]
and so $\mu\big(\VWA_M(\psi_{c(k,d)})\big)\ = 0$ by Claim \ref{claimSbound}.
\end{proof}


\bibliographystyle{amsplain}

\bibliography{bibliography}

\end{document}